\documentclass[11pt,twoside]{amsart}

\usepackage[T1]{fontenc}
\usepackage[english]{babel}
\usepackage{amssymb,amsfonts,amsthm,amsmath,mathrsfs}
\usepackage{layout,version,enumerate}
\usepackage[all,matrix]{xy}
\usepackage[varg]{pxfonts}
\usepackage{indentfirst}
\usepackage{url}
\usepackage[colorlinks=true,linkcolor=blue,citecolor=magenta]{hyperref}
%\usepackage{pgf,tikz}
%\usetikzlibrary{arrows,snakes}
\usepackage[numbers]{natbib}

\topmargin -1cm 
\headsep 0.5cm
\textheight 22cm 
\textwidth 15cm 
\oddsidemargin 0.5cm
\evensidemargin 0.5cm

\theoremstyle{plain}
\newtheorem{thm}{Theorem}[section]
\newtheorem{prop}[thm]{Proposition}
\newtheorem{crl}[thm]{Corollary}
\newtheorem{lem}[thm]{Lemma}

\newtheorem{thmi}{Theorem}
\newtheorem{crli}[thmi]{Corollary}
\newtheorem{propi}[thmi]{Proposition}

\theoremstyle{definition}
\newtheorem{dfn}[thm]{Definition}

\theoremstyle{remark}
\newtheorem{exm}[thm]{Example}
\newtheorem{rmk}[thm]{Remark}

\newcommand{\bn}{\mathbf{N}}
\newcommand{\bz}{\mathbf{Z}}
\newcommand{\br}{\mathbf{R}}
\newcommand{\fx}{\mathfrak{X}}

\newcommand{\fa}{\mathfrak{A}}

\newcommand{\ca}{\mathcal{A}}
\newcommand{\cb}{\mathcal{B}}

\newcommand{\rb}{\mathscr{B}}
\newcommand{\rp}{\mathscr{P}}

\newcommand{\ul}{\mathfrak{u}}

\newcommand{\ph}{\varphi}
\newcommand{\eps}{\varepsilon}

\newcommand{\aut}{\mathrm{Aut}}

\renewcommand{\bar}[1]{\overline{#1}}

\renewcommand{\d}{\mathrm{d}}

\newcommand{\act}{\operatorname{Act}}
\newcommand{\bact}{\overline{\operatorname{Act}}}
\renewcommand{\part}{\operatorname{Part}}
\newcommand{\malg}{\operatorname{MAlg}}
\newcommand{\coeff}{\mathfrak c}
\newcommand{\fix}{\operatorname{Fix}}
\renewcommand{\hom}{\operatorname{Hom}}
\newcommand{\PSL}{\operatorname{PSL}}
\newcommand{\heb}{\mathrm{h}}

\long\def\/*#1*/{}

\setcounter{tocdepth}{1}
\begin{document}

\thispagestyle{empty}
\title{Ultraproducts, weak equivalence and sofic entropy}
\author{Alessandro Carderi}
\date{\today}

\begin{abstract}
In this work, we study pmp actions of countable groups on arbitrary diffuse probability spaces under the point of view of weak equivalence. We will show that any such an action is weakly equivalent to an action on a standard probability space. We also propose a metric on the space of actions modulo weak equivalence which is equivalent to the topology of Ab\'{e}rt and Elek. We will give a simpler proof of the compactness of the space, showing that convergence is characterized by ultraproducts. Using this topology, we will show that a profinite action is weakly equivalent to an ultraproduct of finite actions. 

Finally, combining our results with another result of Ab\'{e}rt and Elek, we will obtain a corollary about sofic entropy. We will show that for free groups and some property (T) groups, sofic entropy of profinite actions depends crucially on the chosen sofic approximation.
\end{abstract}

\maketitle

\section*{Introduction}

Measure preserving actions of countable groups on standard probability spaces have been studied for more than a century. Recently there has been some interest in ultraproduct of actions and their connection with sofic groups, see for example \cite{Conley2013a}, \cite{Abert2011c}, \cite{Pestov2008}, \cite{Elek2005} and \cite{Kerr2013b}. Ultraproducts are a natural limit procedure and the measure preserving actions constructed in this way, remember many properties of the sequences of actions used in their construction. One of the main difficulties of this construction, is that ultraproduct actions are defined on the \textit{Loeb probability space}, which is isomorphic as a measure space to $\{0,1\}^\br$ equipped with the product measure (Theorem \ref{thm:maharamultra}).

Some of the theory of measure preserving actions on standard probability spaces easily generalizes to general measure spaces. For example Dye worked without any assumption on the probability space in \cite{Dye1959}. Anyway not much is known in the general setting. In this work, we will study actions on general probability space under the point of view of \textit{weak containment}. We say that an action $a$ of the group $G$ on the probability space $(\fx_a,\mu_a)$ is weakly contained in an action $b$ on the probability space $(\fx_b,\mu_b)$ if for every $\eps>0$, for every finite partition $\alpha=\{A_1,\ldots,A_n\}$ of $\fx_a$ and for every finite subset $F$ of the group $G$, there exists a finite partition $\beta=\{B_1,\ldots,B_n\}$ of $\fx_b$ such that \[\sum_{i,j\leq n}\sum_{f\in F} |\mu_a(A_i\cap fA_j)-\mu_b(B_i\cap fB_j)|<\eps.\]

We can interpret the vector $(\mu_a(A_i\cap fA_j))_{i,j,f}$ as the $F$-\textit{statistics} of the action $a$ on $\alpha$ and an action $a$ is weakly contained in an action $b$ if we can approximate the statistics of $a$ with partitions in $b$. We say that two actions are \textit{weakly equivalent} if they are weakly contained one into the other. The definition of weak containment in the context of standard probability spaces was introduced by Kechris in \cite{Kechris2010} and the same definition makes sense for actions on arbitrary probability spaces. We will prove the following.

\begin{thmi}\label{thma}
  Every probability measure preserving action of a countable group on a diffuse space is weakly equivalent to an action on a standard probability space. 
\end{thmi}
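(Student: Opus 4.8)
The plan is to realize the desired action on a standard space as a suitably large separable factor of the given one, chosen just large enough to capture all of its finite statistics. So let $a$ be a pmp action of the countable group $G$ on a diffuse probability space $(\fx_a,\mu_a)$.

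The first — and, I expect, the only non-routine — point is that weak containment is governed by the \emph{statistics}, and these form a separable set even though $\malg(\fx_a,\mu_a)$ itself need not be. Indeed, for a fixed $n\in\bn$ and a fixed finite $F\subseteq G$, the assignment sending an ordered $n$-partition $\{A_1,\dots,A_n\}$ of $\fx_a$ to its $F$-statistics $\big(\mu_a(A_i\cap fA_j)\big)_{i,j\le n,\,f\in F}$ takes values in the finite-dimensional space $\br^{n^2|F|}$, every subset of which is separable for the $\ell^1$-distance that appears in the definition of weak containment. Hence for each pair $(n,F)$ I can pick countably many ordered $n$-partitions of $\fx_a$ whose $F$-statistics are dense in the set of all achievable ones; collecting these over all $n\in\bn$ and all finite $F\subseteq G$ — countably many choices, since $G$ is countable — I obtain a countable family $\mathcal{C}$ of partitions of $\fx_a$ such that: for every ordered partition $\alpha=\{A_1,\dots,A_n\}$, every finite $F\subseteq G$ and every $\eps>0$ there is $\gamma=\{C_1,\dots,C_n\}\in\mathcal{C}$ with $\sum_{i,j\le n}\sum_{f\in F}|\mu_a(A_i\cap fA_j)-\mu_a(C_i\cap fC_j)|<\eps$.

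Next I would let $\cb\subseteq\malg(\fx_a,\mu_a)$ be the $G$-invariant $\sigma$-subalgebra generated by $\{gC:g\in G,\ C\in\gamma,\ \gamma\in\mathcal{C}\}$. Since $\mathcal{C}$ and $G$ are both countable, $\cb$ is countably generated, hence a separable probability measure algebra, hence isomorphic to the measure algebra of a standard probability space $(\fx_b,\mu_b)$ by the Carath\'{e}odory isomorphism theorem; the $G$-action on $\cb$ then induces a pmp action $b$ of $G$ on $(\fx_b,\mu_b)$, realized as a factor of $a$ through the inclusion $\cb\hookrightarrow\malg(\fx_a,\mu_a)$. (If the standard target is required to be atomless, one enlarges $\mathcal{C}$ further so as to split the atoms of $\cb$, using that $\fx_a$ is diffuse.)

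It then remains to verify that $a$ and $b$ are weakly equivalent. The weak containment of $b$ in $a$ costs nothing: a partition of $\fx_b$ is, via the identification $\cb\cong\malg(\fx_b,\mu_b)$ and the inclusion $\cb\hookrightarrow\malg(\fx_a,\mu_a)$, a partition of $\fx_a$ with exactly the same $F$-statistics for every finite $F$. For the weak containment of $a$ in $b$, given $\alpha=\{A_1,\dots,A_n\}$, a finite $F\subseteq G$ and $\eps>0$, I choose $\gamma=\{C_1,\dots,C_n\}\in\mathcal{C}$ as provided above; since each $C_i$ belongs to $\cb$, the partition $\gamma$ is the image of a partition $\beta=\{B_1,\dots,B_n\}$ of $\fx_b$ with $\mu_b(B_i\cap fB_j)=\mu_a(C_i\cap fC_j)$ for all $i,j\le n$ and $f\in F$, and this $\beta$ witnesses the defining inequality. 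Thus $a$ and $b$ are weakly equivalent, with $b$ defined on a standard probability space, as desired.
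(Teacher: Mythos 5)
Your proof is correct, and it reaches the conclusion by a more direct route than the paper. Both arguments share the same skeleton: build a countably generated, $G$-invariant $\sigma$-subalgebra of $\malg(\fx_a,\mu_a)$ whose associated factor $b$ is standard and still captures all finite statistics of $a$, so that $b\prec a$ is automatic (a factor is always weakly contained) and $a\prec b$ is the real issue. Where you differ is in the selection principle and the verification of $a\prec b$: you observe that for each fixed $k$ and finite $F\subset G$ the achievable vectors $\coeff(a,F,\alpha)$, $\alpha\in\part_k(\fx_a)$, live in $[0,1]^{k^2|F|}$, hence form a separable set, and you choose a countable statistics-dense family of partitions of $\fx_a$ directly; the desired approximating partition $\beta$ of $\fx_b$ is then literally one of the chosen partitions, viewed inside the factor, with identical statistics. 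The paper instead passes through the auxiliary space $\act_d(G)$ of actions on a standard space: it takes a countable weakly dense subset of $\{c\in\act_d(G):c\prec a\}$, approximates the statistics of generating partitions of those actions inside $\fx_a$, and then needs two extra ingredients -- the weak closedness of $\{c\in\act_d(G):c\prec b\}$ (Corollary \ref{crl:three}) and the criterion of Lemma \ref{lem:crit} (whose ``moreover'' part uses diffuseness via Lemma \ref{lem:difffact}) -- to conclude $a\prec b$. Your argument avoids these lemmas and the separability of the weak topology on $\act_d(G)$ altogether, which makes it shorter and more self-contained; the paper's detour has the side benefit of establishing statements (Lemma \ref{lem:crit}, Corollary \ref{crl:three}) that are of independent use in its later development. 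Two small points to keep at the paper's level of rigor: the isomorphism you invoke is the classical classification of separable measure algebras (so that a countably generated invariant subalgebra is the measure algebra of a standard space, with the action realized pointwise by the standard point-realization theorem), and your parenthetical enlargement of $\mathcal{C}$ to make the factor diffuse is exactly what is needed if one wants the sharper form stated as Theorem \ref{thm:thma} (a standard \emph{diffuse} factor), the diffuseness of $(\fx_a,\mu_a)$ making this possible as in Lemma \ref{lem:difffact}.
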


More precisely, we will prove in Theorem \ref{thm:thma} that every probability measure preserving action on a diffuse space has a standard diffuse factor which is weakly equivalent to the action. 

Theorem \ref{thma} also implies that the family of weakly equivalence classes of actions on finite or diffuse probability spaces is a set and it is isomorphic to the set of classes of actions on $\{1,\ldots,n\}$ for $n\in \bn$ and on a fixed standard probability space, say $[0,1]$ with respect to the Lebesgue measure. Let us denote the set of classes by $\bact(G)$. One of the avantages of working with $\bact(G)$ is that it is closed under ultraproducts: for every sequence of (classes of) actions $(a_n)_n$ of $\bact(G)$ and for every ultrafilter $\ul$, the (class of the) action on the ultraproduct space $a_\ul$ is still an element of $\bact(G)$. 

Ab\'{e}rt and Elek defined in \cite{Abert2011c} a compact, metric topology on the space of weak equivalence classes of actions on a standard Borel space which, by Theorem \ref{thma}, is isomorphic to $\bact(G)$. Once we identify these two spaces, it is not hard to see that every converging sequence converges to the class of its ultraproduct (with respect to any ultrafilter). Since ultraproducts of sequences of actions always exist, the topology is necessarily compact and it is completely determined by this property. This compact space was later studied in \cite{Tucker2012}, \cite{Burton2015} and \cite{Burton2015a}.

We introduce in Definition \ref{dfn:wct} a compact, metric topology on $\bact(G)$, which is equivalent to the topology of Ab\'{e}rt and Elek. The metric is essentially the metric used in \cite{Burton2015}. A sequence is converging for this topology if the asymptotic of the \textit{statistics} of the actions converges to the \textit{statistics} of the limit action and as in the case of Ab\'{e}rt and Elek's topology, every converging sequence converges to its ultraproduct, see Theorem \ref{thm:compact}. 

One of the aims of this work is to give a concise, simple and self-contained proof of the compactness of the space, Theorem 1 of \cite{Abert2011c}. 

It will follow easily from the definition of the topology on $\bact(G)$, that if $\{H_n\}_n$ is a descending chain of finite index subgroups of $G$, then the limit of the sequence of the finite actions $G/H_n$ is the (class of the) profinite action $a^{(H_n)}$. Since limits are always weakly equivalent to the ultraproducts of the sequences, we get the following interesting corollary. 

\begin{crli}\label{crla}
  Let $G$ be a countable group and let $(H_n)$ be a chain of finite index subgroups. Then the profinite action $a^{(H_n)}$ associated to the sequence  $(H_n)_n$ is weakly equivalent to the ultraproduct of the sequence of finite actions on the quotients $(G/H_n)$ with respect to any ultrafilter. 
\end{crli}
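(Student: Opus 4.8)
The plan is to obtain the corollary by combining two facts: that the sequence of finite actions $(G/H_n)_n$ converges to $a^{(H_n)}$ in the topology of Definition~\ref{dfn:wct}, and that, by Theorem~\ref{thm:compact}, a convergent sequence in $\bact(G)$ always converges to the class of its ultraproduct (with respect to an arbitrary ultrafilter).

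First I would set up the profinite action concretely. Realize $a^{(H_n)}$ as the action of $G$ on the coset tree $\fx=\varprojlim_n G/H_n$, where the bonding map $G/H_{n+1}\to G/H_n$ sends a coset of $H_{n+1}$ to the unique coset of $H_n$ containing it, equipped with the inverse limit of the uniform probability measures on the finite sets $G/H_n$. This inverse limit exists because each coset of $H_n$ splits into exactly $[H_n:H_{n+1}]$ cosets of $H_{n+1}$, so the bonding maps are measure preserving; hence each coordinate projection $\pi_n\colon\fx\to G/H_n$ realizes the finite action $G/H_n$ as a $G$-equivariant measure preserving factor of $a^{(H_n)}$, and the finite subalgebras $\pi_n^{-1}(\malg(G/H_n))$ increase with $n$ and generate $\malg(\fx)$.

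Next I would verify $G/H_n\to a^{(H_n)}$. The crucial observation is that a partition $\beta$ of the finite set $G/H_m$, regarded via $\pi_m$ as a partition of $\fx$, has exactly the same $F$-statistics in $a^{(H_n)}$ as it has in the finite action $G/H_n$, for every $n\ge m$ and every finite $F\subseteq G$: all the sets $B_i\cap fB_j$ involved are already measurable at level $m$, and the projections $\pi_n$ preserve the measure. Thus, given a finite partition $\alpha$ of $\fx$, a finite set $F$ and $\eps>0$, I would approximate $\alpha$ within $\eps$ (in the sense of the $F$-statistics) by a partition measurable with respect to some level $\pi_m^{-1}(\malg(G/H_m))$ — possible because these algebras generate $\malg(\fx)$ and the metric of Definition~\ref{dfn:wct} only sees statistics — and then transport this partition back to $G/H_n$; this realizes the $F$-statistics of $\alpha$ to within $\eps$ simultaneously in all $G/H_n$ with $n\ge m$. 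Combined with the fact that each $G/H_n$ is a factor of $a^{(H_n)}$, hence weakly contained in it, and that weak containment is a closed condition in the compact metric space $\bact(G)$, one concludes that every limit point of $(G/H_n)_n$ is weakly equivalent to $a^{(H_n)}$; by compactness the sequence therefore converges to the class of $a^{(H_n)}$.

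Applying Theorem~\ref{thm:compact} to this convergent sequence finishes the argument: the limit is the class of the ultraproduct of $(G/H_n)_n$ with respect to any ultrafilter, so $a^{(H_n)}$ is weakly equivalent to that ultraproduct. The step I expect to require the most care is the approximation in the previous paragraph, i.e. checking that convergence of the statistics of the cylinder partitions of the coset tree is enough to force convergence in the metric of Definition~\ref{dfn:wct}; this amounts to the standard facts that the metric is continuous under small perturbations of partitions in the measure algebra and that the cylinder algebras are dense in $\malg(\fx)$. Everything else is routine verification.
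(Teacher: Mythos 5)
Your proposal is correct and is essentially the paper's argument: both show that the finite actions $G/H_n$ WC-converge to the profinite action $a^{(H_n)}$ — you by estimating $\d_{F,\alpha}(a^{(H_n)},G/H_n)$ directly via the level (cylinder) partitions together with the fact that each $G/H_n$ is a factor, the paper by routing exactly the same ingredients (Remark \ref{rmk:genpart}, the triangle inequality of Proposition \ref{prop:triang}, the factor property) through Proposition \ref{prop:upward} on upward directed sequences — and then both invoke Theorem \ref{thm:compact} to identify the WC-limit with the ultraproduct for an arbitrary ultrafilter. The difference is only in packaging, not in substance.
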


We will give an application of Corollary \ref{crla} in the context of sofic entropy. 

\subsection*{Sofic Entropy}

The entropy of a dynamical system was introduced by Kolmogorov in the fifties for actions of the integer group on a probability space. This invariant has been fundamental for distinguish unitarily equivalent actions. For instance Ornstein was able to classify Bernoulli shifts of the integer group: two such actions are conjugate if and only if the base spaces have the same entropy. The theory was successfully extended to actions of amenable groups and Ornstein and Weiss were able to distinguish Bernoulli shifts over base space with different entropy. While the entropy of actions of amenable groups was widely studied, there were some evidences pointing out that it would not have been possible to extend the definition to non amenable groups: some of the crucial properties of the entropy can not be true for the entropy of actions of such groups. In particular the question about Bernoulli shifts was unsolved. 
Several years later in 2010, Bowen in \cite{Bowen2010} and \cite{Bowen2010b} introduced a new concept of entropy for action of sofic groups which extends the previous definition in the amenable case. Using this new entropy, he was able to distinguish Bernoulli shifts of a large class of sofic groups as for amenable groups: the entropy of the base space is an invariant. This classification was extended to all sofic groups shortly later by Kerr and Li in \cite{Kerr2011a}, where they also proposed a definition of sofic topological entropy and stated a variational principle.

One of the major differences between entropy theory of amenable groups and sofic groups, is that the definition of entropy for sofic groups depends on a fixed sofic approximation. Once the approximation is fixed, the entropy is only defined (as a non-negative number) for some actions, which we will call its \textit{domain of definition}. For the others the entropy is just declared to be $-\infty$. This means that each sofic approximation gives us a possibly different notion of entropy which has its proper domain. Bowen proved in \cite{Bowen2010}, see also \cite{Kerr2013a}, that for Bernoulli shifts the entropy is always defined and its value does not depend on the sofic approximation. This phenomenon was later extended to algebraic actions see \cite{Bowen2011}, \cite{Kerr2011a} and \cite{Hayes2014a}. 

We will try to clarify how the domain of definition of sofic entropy depends on the sofic approximation. The answer appears extremely simple when the sofic entropy is defined using a sofic approximation which comes from a chain of finite index subgroups. In fact if $G$ is a residually finite group and $(H_n)_n$ is a chain of subgroups such that the associated profinite action is free, then the sequence of actions of $G$ on the finite quotients is a sofic approximation of $G$, which we will denote by $\Sigma_{(H_n)}$. The following proposition is a consequence of Corollary \ref{crla}.

\begin{propi}
  Let $G$ be a residually finite group and let $(H_n)_n$ be a chain of finite index subgroups such that the associated profinite action  $a^{(H_n)}$ is free. Then for every measure preserving action $b$ of $G$ on a standard probability space $(\fx,\mu)$, we have that $\heb_{\Sigma_{(H_n)}}(b)>-\infty$ if and only if the action $b$ is weakly contained in the profinite action $a^{(H_n)}$. 
\end{propi}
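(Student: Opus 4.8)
The plan is to combine Corollary \ref{crla} with the description, due to Ab\'{e}rt and Elek, of the domain of definition of sofic entropy for a sofic approximation coming from a sequence of finite actions. Throughout I write $b\prec c$ for ``$b$ is weakly contained in $c$'', and I recall from the discussion preceding the statement that freeness of $a^{(H_n)}$ --- which is precisely asymptotic freeness of the finite actions $G/H_n$ --- is what makes $\Sigma_{(H_n)}=(G/H_n)_n$ a genuine sofic approximation of $G$.

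Fix an ultrafilter $\ul$ and let $a_\ul$ denote the ultraproduct of the sequence of finite actions $(G/H_n)_n$ along $\ul$; as noted in the introduction, $a_\ul$ is again a probability measure preserving action, i.e.\ $a_\ul\in\bact(G)$, and by Corollary \ref{crla} it is weakly equivalent to the profinite action $a^{(H_n)}$ --- for every choice of $\ul$. The key external input is the following result of Ab\'{e}rt and Elek \cite{Abert2011c}: for a sofic approximation $\Sigma$ given by a sequence $(\sigma_n)_n$ of finite probability measure preserving actions, and for every action $b$ on a standard probability space, one has $\heb_\Sigma(b)>-\infty$ if and only if $b$ is weakly contained in the ultraproduct of the $\sigma_n$ along some non-principal ultrafilter. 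Specialising to $\Sigma=\Sigma_{(H_n)}$ and using that $b\prec c$ depends only on the weak equivalence class of $c$ (immediate from the definition of weak containment recalled in the introduction) together with Corollary \ref{crla}, the condition ``$b$ is weakly contained in $a_\ul$ for some $\ul$'' becomes equivalent to $b\prec a^{(H_n)}$. Hence $\heb_{\Sigma_{(H_n)}}(b)>-\infty$ if and only if $b\prec a^{(H_n)}$, which is the assertion of the proposition.

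The step I expect to require the most care is invoking the Ab\'{e}rt--Elek characterization in exactly the form needed: one must check it applies to a sofic approximation literally given by finite \pmp actions, so that the ultraproduct it refers to coincides with the ultraproduct of \pmp actions studied here, to which Corollary \ref{crla} applies. Should one prefer a self-contained argument, the characterization can be unwound directly. Indeed $\heb_{\Sigma_{(H_n)}}(b)>-\infty$ holds if and only if for every finite $F\subseteq G$, every finite partition $\alpha$ of $\fx$ and every $\eps>0$ there are infinitely many $n$ for which $G/H_n$ carries a partition whose $F$-statistics (with respect to the uniform measure) $\eps$-approximate the $F$-statistics of $\alpha$, a ``good model'' for the triple $(\alpha,F,\eps)$. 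The sets of indices $n$ admitting such good models form a downward-directed family as $F$ grows, $\alpha$ is refined and $\eps$ shrinks (up to absorbing a constant factor into $\eps$), so if they are all infinite they have the finite intersection property and are contained in a single non-principal ultrafilter $\ul$; by the standard ultraproduct computation this is exactly the statement $b\prec a_\ul$, and conversely $b\prec a_\ul$ with $\ul$ non-principal forces a good model for infinitely many $n$ for every triple. In either route it is Corollary \ref{crla} that lets one replace $a_\ul$ by $a^{(H_n)}$, completing the argument.
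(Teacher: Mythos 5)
Your proof is correct and is essentially the paper's own argument: the characterization $\heb_\Sigma(b)>-\infty \iff b\prec a^\Sigma_\ul$ is exactly Proposition \ref{prop:entrweakcon} (whose proof in the paper is the same ``good models''/ultraproduct unwinding via Proposition \ref{prop:weakquasi} that you sketch), and combining it with Corollary \ref{crl:profvsultra} (Corollary \ref{crla}, valid for every ultrafilter) yields the statement. The only quibble is attribution: this characterization is not a theorem of Ab\'ert--Elek in \cite{Abert2011c} but is proved in the paper itself (and noted there to be a special case of Proposition 6 of \cite{Graham2014}), so your self-contained derivation, not the citation, is the step that carries the argument.
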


The proposition tells us that the domains of definition depend on the sofic approximation: there are actions that are in some domains but not in others. Ab\'{e}rt and Elek in \cite{Abert2012b} proved an interesting result about rigidity of weak equivalence for profinite actions, which we can combine with the previous proposition to get the following result. 

\begin{thmi}\label{thmc}
  Let $G$ be a countable free group or $\PSL_k(\bz)$ for $k\geq 2$. Then there is a continuum of normal chains $\{(H^r_n)_n\}_{r\in\br}$ such that $\heb_{\Sigma_{(H^r_n)}}(a^{(H^s_n)})>-\infty$ if and only if $r=s$.
\end{thmi}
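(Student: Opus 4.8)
The plan is to reduce Theorem~\ref{thmc} to the rigidity theorem of Ab\'{e}rt and Elek in \cite{Abert2012b} via the preceding Proposition. First I would record the following elementary observation: since $G$ (a countable free group, or $\PSL_k(\bz)$) is residually finite, any descending chain $(H_n)_n$ of finite index \emph{normal} subgroups with $\bigcap_n H_n=\{1\}$ yields a free profinite action $a^{(H_n)}$, because normality forces every point of $\varprojlim_n G/H_n$ to have stabiliser equal to $\bigcap_n H_n=\{1\}$. Hence $\Sigma_{(H_n)}$ is a genuine sofic approximation of $G$ and the Proposition applies: $\heb_{\Sigma_{(H_n)}}(b)>-\infty$ if and only if $b$ is weakly contained in $a^{(H_n)}$. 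So it suffices to produce a continuum $\{(H^r_n)_n\}_{r\in\br}$ of such normal chains for which $a^{(H^s_n)}$ is weakly contained in $a^{(H^r_n)}$ precisely when $r=s$; applying the equivalence above with $(H_n)=(H^r_n)$ and $b=a^{(H^s_n)}$ (legitimate, as the profinite space $\varprojlim_n G/H^s_n$ is compact metrizable, hence standard) then gives the theorem.

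For the direction $r=s$ there is essentially nothing to do: weak containment is reflexive, so $a^{(H^r_n)}$ is weakly contained in itself and the Proposition yields $\heb_{\Sigma_{(H^r_n)}}(a^{(H^r_n)})>-\infty$. (Equivalently, one could invoke Corollary~\ref{crla}, which identifies $a^{(H^r_n)}$ up to weak equivalence with the ultraproduct of the finite actions $(G/H^r_n)$.) For $r\neq s$, the contrapositive of the Proposition reduces $\heb_{\Sigma_{(H^r_n)}}(a^{(H^s_n)})=-\infty$ to the assertion that $a^{(H^s_n)}$ is \emph{not} weakly contained in $a^{(H^r_n)}$.

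It remains to extract from \cite{Abert2012b} a continuum of descending normal chains with trivial intersection whose associated profinite actions are pairwise incomparable for weak containment, that is $a^{(H^s_n)}\not\prec a^{(H^r_n)}$ for all $r\neq s$ (note that this one-sided statement, applied also with the roles of $r$ and $s$ exchanged, is equivalent to pairwise weak incomparability). This is exactly the rigidity phenomenon established by Ab\'{e}rt and Elek for free groups and for $\PSL_k(\bz)$ --- in the latter case, for $k\geq 3$, exploiting property~(T) --- and their chains are built from normal subgroups (kernels of congruence-type quotients), so normality and triviality of the intersection are automatic. I expect the only real obstacle to be reconciling the precise form of \cite{Abert2012b} with what is needed here: one must ensure the family produced there witnesses the one-sided non-containment $a^{(H^s_n)}\not\prec a^{(H^r_n)}$ rather than merely weak inequivalence of the $a^{(H^r_n)}$; if only the weaker conclusion is stated, one would have to re-examine their spectral-gap argument to recover incomparability. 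Everything else is a purely formal consequence of the Proposition and of Corollary~\ref{crla}.
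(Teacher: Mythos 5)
Your proposal is correct and takes essentially the same route as the paper: Theorem~\ref{thmc} is obtained by combining Corollary~\ref{crl:entropyprofinite} (the Proposition) with the Ab\'ert--Elek rigidity theorem, which the paper states precisely in the one-sided form $a^{(H^s_n)}\prec a^{(H^r_n)}$ if and only if $r=s$ (its sketch uses Lemma 5.2 of \cite{Abert2012b} to get $a^I\prec a^J$ iff $I\subset J$ for chains of intersections of congruence subgroups, then picks a continuum of incomparable infinite subsets of $\bn$), so the only caveat you raise, about recovering one-sided non-containment rather than mere weak inequivalence, is already settled by the cited result. Your remaining reductions (freeness of the profinite action for normal chains with trivial intersection, reflexivity of weak containment for $r=s$) match the paper's implicit steps.
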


Observe that the entropy of profinite actions has been calculated in \cite{Chung2014} and it is always $0$, when it is defined. Since profinite actions have a generating partition with finite (actually arbitrarily small) entropy (Lemma \ref{lem:genprof} ), we can use Bowen's computation of entropy for products of actions with Bernoulli shifts \cite{Bowen2010} to get actions which have positive entropy with respect to some sofic approximations and $-\infty$ with respect to others, see Theorem \ref{thm:randinf}.

We do not know any action for which the sofic entropy can have two different non-negative values. 

\subsection*{Acknowledgements }
This work is part of the author's thesis under the supervision of Damien Gaboriau at the ENS de Lyon. The author is very grateful to D. Gaboriau for all the support through these years and in particular the author warmly thanks him for the important comments about this work and for carefully reading a first version. The author also wants to thank Lewis Bowen and Robin Tucker-Drob for various discussions. The author was supported by the ANR project GAMME (ANR-14-CE25-0004).

\section{Ultrapoducts of probability spaces}

In this section, we describe the ultraproduct of probability measure spaces. These probability spaces were introduced by Loeb in \cite{Loeb1975} in the language of non-standard analysis and they are often called Loeb spaces. All the material presented here is well-known and a recent exposition can be found in \cite{Conley2013a} and \cite{Elek2012b}. 

Let us fix a non-principal ultrafilter $\ul$ on $\bn$. 

\subsection{Set-theoretic ultraproducts}

\begin{dfn}
  Let $\{X_n\}_{n\in\bn}$ be a family of sets and let $X$ be their product $X:=\prod_{n\in\bn} X_n.$ We define the \textbf{ultraproduct} of the family $\{X_n\}_n$ to be the following quotient of $X$ \[X_\ul:=X/\sim_\ul\quad\text{ where }\quad (x_n)_n \sim_\ul (y_n)_n\ \text{ if }\ \{n : x_{n}= y_{n}\}\in \ul.\]
\end{dfn}

We will denote by $x_\ul$ and $A_\ul$ elements and subsets of $X_\ul$. For a sequence $(x_n)_n\in X$, we will denote by $[x_n]_\ul$ its class in $X_\ul$ and similarly for a sequence of subsets $\{A_n\subset X_n\}_n$, we will denote by $[A_n]_\ul$ the class of $(A_n)_n$. 

It is easy to observe that \[[A_n]_\ul \cap [B_n]_\ul=[A_n\cap B_n]_\ul,\quad [A_n]_\ul\cup [B_n]_\ul=[A_n\cup B_n]_\ul.\]  

\begin{rmk}\label{rmk:card}
We remark that if $\{X_n\}_n$ is a sequence of finite sets such that $\lim_\ul |X_n|=\infty$ or if it is a sequence of countable non-finite sets, the ultraproduct $X_\ul$ has the cardinality of the continuum. 

In fact, it is easy to construct a surjective map from $X_\ul$ to interval $[0,1]$. For example, if $X_n=\{1,\ldots,n\}$ then the map can be defined as \[\ph:X_\ul\to [0,1],\quad \ph([a_n]_n)=:\lim_{n\in \ul} \frac{a_n}{n},\] where the limit on the right is the limit with respect to the Euclidean topology. Since the rationals are dense in the interval, the map $\ph$ has to be surjective and a similar argument works for the general case. 
\end{rmk}

\subsection{Metric ultraproducts}

\begin{dfn}
  Let $\{(M_n,\d_n)\}_{n\in\bn}$ be a family of uniformly bounded metric spaces. We define the pseudo-metric $\d_\ul$ on $M:=\prod_{n\in \bn}M_n$ by \[\d_\ul((x_n)_n,(y_n)_n):=\lim_{n\in\ul}\d_n(x_n,y_n).\]

  We define the \textbf{metric ultraproduct} of the family $\{(M_n,\d_n)\}_n$ with respect to the ultrafilter $\ul$ to be the metric space associated to the pseudo-metric $\d_\ul$, that is $M_\ul:= M/\{\d_\ul=0\}.$
\end{dfn}

%It is well-know that the metric ultraproduct of a family of uniformly bounded metric spaces is complete, see for example page 147 of \cite{Wantiez1996}.

\begin{rmk}
  Let $\{G_n\}_n$ be a sequence of groups and let $\d_n$ be a bounded bi-invariant metric on $G_n$. It is easy to check that the subgroup \[K_\ul:=\left\lbrace (g_n)_n\in \prod_n G_n:\ \d_\ul((g_n)_n,(1_{G_n})_n)=0\right\rbrace\] is normal, so the metric ultraproduct $G_\ul$ is a topological group and the metric $\d_\ul$ is bi-invariant. For more on ultraproduct of groups, see \cite{Pestov2008}.
\end{rmk}

\subsection{Measure Spaces}

We will now define the ultraproduct of a sequence of probability spaces using Carath\'eodory's method. Let $\{(\fx_n,\rb_n,\mu_n)\}_{n\in\bn}$ be a family of probability spaces and let $\fx_\ul$ be their ultraproduct. We define 
\begin{gather*}
  \theta: \rp(\fx_\ul)\to [0,\infty],\\
\theta(A_\ul):=\inf\left\lbrace \sum_{i\in\bn} \lim_{n\in\ul} \mu_n(B_{n}^i) : A_\ul\subset \bigcup_{i\in\bn} [B_{n}^i]_\ul,\ B_{n}^i\in\rb_n\ \forall n,i\in\bn\right\rbrace.
\end{gather*}

\begin{prop} 
  The function $\theta$ defined above is an outer measure.
\end{prop}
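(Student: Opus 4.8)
The plan is to verify directly the three defining properties of an outer measure: $\theta(\emptyset)=0$, monotonicity, and countable subadditivity. First I would observe that for every $A_\ul\subseteq\fx_\ul$ the family of admissible covers appearing in the infimum is non-empty (take $B_n^0=\fx_n$ and $B_n^i=\emptyset$ for $i\geq 1$), so $\theta$ is well defined and in fact $0\leq\theta(A_\ul)\leq 1$; in particular the ultralimits $\lim_{n\in\ul}\mu_n(B_n^i)$ always exist, since each sequence $(\mu_n(B_n^i))_n$ takes values in the compact interval $[0,1]$.

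For $\theta(\emptyset)=0$ I would use the cover with $B_n^i=\emptyset$ for all $n,i$, whose total cost is $\sum_i\lim_{n\in\ul}\mu_n(\emptyset)=0$. Monotonicity is immediate from the definition: if $A_\ul\subseteq A'_\ul$, then every admissible cover of $A'_\ul$ is also an admissible cover of $A_\ul$, so the infimum defining $\theta(A_\ul)$ ranges over a larger set and hence $\theta(A_\ul)\leq\theta(A'_\ul)$.

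The only step requiring an actual argument is countable subadditivity, and there I would run the standard $\eps 2^{-i}$ trick. Given $A_\ul=\bigcup_{i\in\bn}A_\ul^i$ and $\eps>0$, I may assume $\sum_i\theta(A_\ul^i)<\infty$, and for each $i$ I would choose an admissible cover $A_\ul^i\subseteq\bigcup_{j\in\bn}[B_n^{i,j}]_\ul$ with $\sum_j\lim_{n\in\ul}\mu_n(B_n^{i,j})\leq\theta(A_\ul^i)+\eps 2^{-i}$. Since a countable union of countable index sets is countable, the family $\{[B_n^{i,j}]_\ul\}_{i,j\in\bn}$ is an admissible cover of $A_\ul$, so $\theta(A_\ul)\leq\sum_{i,j}\lim_{n\in\ul}\mu_n(B_n^{i,j})\leq\sum_i\theta(A_\ul^i)+\eps$, and letting $\eps\to 0$ gives the claim.

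I do not expect a genuine obstacle here: this is exactly Carath\'eodory's construction, and the only points to be mildly careful about are that the ultralimits are well defined (boundedness of $\mu_n$), that rearranging a doubly-indexed series of non-negative terms is harmless, and that combining countably many countable covers keeps the cover of the admissible form. Note that no finite-additivity property of $\lim_{n\in\ul}$ is needed for this proposition.
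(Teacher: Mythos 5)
Your proof is correct and follows essentially the same route as the paper: checking $\theta(\emptyset)=0$ via the empty cover, monotonicity by noting covers of the larger set cover the smaller, and countable subadditivity by the standard $\eps 2^{-i}$ combination of near-optimal covers. The extra remarks (non-emptiness of the set of admissible covers and existence of the ultralimits by boundedness) are harmless additions to what the paper does.
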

\begin{proof}
For this, we have to check that $\theta(\emptyset)=0$, that if $A_\ul\subset C_\ul$ then $\theta(A_\ul)\leq \theta(C_\ul)$ and that for every sequence $\{A_\ul^j\subset \fx_\ul\}_j$ we have $\theta(\cup_j A_\ul^j)\leq \sum_j \theta(A_\ul^j)$. This can be done exactly as for the Lebesgue measure, see \cite[114D]{Fremlin1}.
  \begin{itemize}
  \item Since $\emptyset\subset [\emptyset]_\ul$, we must have that $\theta(\emptyset)=0$. 
  \item Suppose $A_\ul\subset C_\ul$. For every family $\{B_{n}^i\in\rb_n\}_{i,n}$ such that $C_\ul\subset \cup_i [B_{n}^i]_\ul$, we have that $A_\ul\subset \cup_i [B_{n}^i]_\ul$ so that $\theta(A_\ul)\leq \theta(C_\ul).$
  \item Let $\{A_{\ul}^j\}_{j\in\bn}$ be a sequence of subsets of $\fx_\ul$ and fix $\eps>0$. For every $j\in\bn$, fix a family $\{B_{n}^{j,i}\in\rb_n\}_{i,n}$ such that \[A_{\ul}^j \subset \bigcup_{i\in\bn} [B_{n}^{j,i}]_\ul\text{ and } \sum_{i\in\bn} \lim_{n\in\ul}\mu_n(B_{n}^{j,i})\leq \theta(A_{\ul}^j)+2^{-j}\eps.\] Then $\cup_j A_{\ul}^j\subset \cup_{j,i} [B_{n}^{j,i}]_\ul,$ so \[\theta(\cup_j A_\ul^j)\leq \sum_{i,j} \lim_{n\in\ul}\mu_n(B_{n}^{j,i})\leq \sum_j \theta(A_{\ul }^j)+\eps.\qedhere \] 
  \end{itemize}
\end{proof}

Whenever we have an outer measure, Carath\'eodory's theorem gives us a way of constructing a measure space. 

\begin{dfn}
  The \textbf{measure ultraproduct} of a family of probability spaces 
  $\{(\fx_n,\rb_n,\mu_n)\}_{n\in\bn}$ is the probability space $(\fx_\ul,\rb_\ul,\mu_\ul)$, where 
  \begin{align*}
    \rb_\ul:=&\{A_\ul\subset X_\ul : \theta(B_\ul)\geq \theta(B_\ul\cap A_\ul)+\theta(B_\ul\setminus A_\ul)\text{ for every }B_\ul\subset X_\ul\}\\
    \mu_\ul(A_\ul):=&\theta(A_\ul)\quad \text{ for every }A_\ul\in\rb_\ul.
  \end{align*}
\end{dfn}

Carath\'eodory's theorem, see for example \cite[113C]{Fremlin1}, tells us that $(X_\ul,\rb_\ul,\mu_\ul)$ is a measure space. In the following proposition we describe which subsets of the ultraproduct are measurable and we show how to compute their measure. 

\begin{prop} \label{prop:consmeas} Let $\{(\fx_n,\rb_n,\mu_n)\}_{n\in\bn}$ be a family of probability spaces and let $(X_\ul,\rb_\ul,\mu_\ul)$ be the measure space associated to $\theta$ via the Carath\'eodory's method, that is the measure ultraproduct of the family of probability spaces. 
  \begin{enumerate}
  \item For every sequence $\{A_n\in\rb_n\}_n$ we have $[A_n]_\ul\in\rb_\ul$ and $\mu_\ul([A_n]_\ul)=\lim_{n\in\ul}\mu_n(A_n).$
  \item For every $A_\ul\in\rb_\ul$ there is a sequence $\{B_n\in\rb_n\}_n$ such that $\mu_\ul(A_\ul\Delta [B_n]_\ul)=0$.
  \end{enumerate}
\end{prop}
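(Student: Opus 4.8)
The plan is to treat the two statements separately; the first carries all the content, and the second follows from it by a diagonal argument over $\ul$. Call a subset of $\fx_\ul$ \emph{internal} if it has the form $[A_n]_\ul$ with $A_n\in\rb_n$; by the identities recorded above, together with $\fx_\ul\setminus[A_n]_\ul=[\fx_n\setminus A_n]_\ul$, the internal sets form a Boolean algebra on which $\{A_n\}_n\mapsto[A_n]_\ul$ acts as a homomorphism, and one checks directly that $[A_n]_\ul=\emptyset$ exactly when $A_n=\emptyset$ for $\ul$-almost every $n$ (pick a point in $A_n$ otherwise). I will use these facts freely.

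For (1), the bound $\theta([A_n]_\ul)\le\lim_{n\in\ul}\mu_n(A_n)$ is immediate, using $[A_n]_\ul$ itself padded with empty sets as a cover. The reverse inequality is the crux and rests on countable saturation of the ultraproduct: if $[A_n]_\ul\subset\bigcup_{i\in\bn}[B_n^i]_\ul$ then already $[A_n]_\ul\subset\bigcup_{i\le N}[B_n^i]_\ul$ for some finite $N$. Indeed, otherwise the internal sets $F_N:=[A_n]_\ul\setminus\bigcup_{i\le N}[B_n^i]_\ul=\bigl[A_n\setminus\bigcup_{i\le N}B_n^i\bigr]_\ul$ are all nonempty and decrease in $N$; writing $S_N:=\{n:A_n\setminus\bigcup_{i\le N}B_n^i\ne\emptyset\}$, which lies in $\ul$ precisely because $F_N\ne\emptyset$, and choosing an index function $N(n):=\max(\{N\le n:n\in S_N\}\cup\{0\})$ — so that $\{n:N(n)\ge M\}\supset S_M\cap[M,\infty)\in\ul$ for every $M$ — one selects, for $\ul$-almost every $n$, a point $x_n\in A_n\setminus\bigcup_{i\le N(n)}B_n^i$; then $[x_n]_\ul\in[A_n]_\ul$, while for each fixed $i_0$ the $\ul$-large set $S_{i_0}\cap[i_0,\infty)$ witnesses $[x_n]_\ul\notin[B_n^{i_0}]_\ul$, contradicting the cover. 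Given a finite subcover, $A_n\subset\bigcup_{i\le N}B_n^i$ for $\ul$-almost every $n$ (else an internal set would be nonempty), so $\lim_{n\in\ul}\mu_n(A_n)\le\sum_{i\le N}\lim_{n\in\ul}\mu_n(B_n^i)\le\sum_{i\in\bn}\lim_{n\in\ul}\mu_n(B_n^i)$ by finite subadditivity of the $\mu_n$ and additivity of the ultralimit; taking the infimum over covers yields $\theta([A_n]_\ul)\ge\lim_{n\in\ul}\mu_n(A_n)$. Carath\'eodory measurability of $[A_n]_\ul$ needs no saturation: for arbitrary $E_\ul\subset\fx_\ul$ and any cover $E_\ul\subset\bigcup_i[B_n^i]_\ul$, split $E_\ul\cap[A_n]_\ul\subset\bigcup_i[B_n^i\cap A_n]_\ul$ and $E_\ul\setminus[A_n]_\ul\subset\bigcup_i[B_n^i\setminus A_n]_\ul$ and use $\mu_n(B_n^i)=\mu_n(B_n^i\cap A_n)+\mu_n(B_n^i\setminus A_n)$ with additivity of the ultralimit to get $\theta(E_\ul\cap[A_n]_\ul)+\theta(E_\ul\setminus[A_n]_\ul)\le\theta(E_\ul)$. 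Hence $[A_n]_\ul\in\rb_\ul$ and $\mu_\ul([A_n]_\ul)=\theta([A_n]_\ul)=\lim_{n\in\ul}\mu_n(A_n)$.

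For (2), fix $A_\ul\in\rb_\ul$ and, for each $k$, use the definition of $\theta$ to pick a cover $A_\ul\subset\bigcup_i[B_n^{k,i}]_\ul$ with $\sum_i\lim_{n\in\ul}\mu_n(B_n^{k,i})\le\mu_\ul(A_\ul)+2^{-k}$. With $D_n^{k,m}:=\bigcup_{i\le m}B_n^{k,i}$, the internal sets $[D_n^{k,m}]_\ul$ increase in $m$ to $C_\ul^k:=\bigcup_i[B_n^{k,i}]_\ul$, whose $\mu_\ul$-measure (computed via part (1) and countable subadditivity) is at most $\mu_\ul(A_\ul)+2^{-k}$; by continuity from below I choose $m_k$ so that, setting $E_n^k:=D_n^{k,m_k}$, one has $\mu_\ul(C_\ul^k\setminus[E_n^k]_\ul)<2^{-k}$, whence (using $A_\ul\subset C_\ul^k$ and $[E_n^k]_\ul\subset C_\ul^k$) $\mu_\ul(A_\ul\,\Delta\,[E_n^k]_\ul)<2^{-k+1}$. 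By part (1) this gives $\lim_{n\in\ul}\mu_n(E_n^{k+1}\,\Delta\,E_n^k)<2^{-k+2}$, so $T_k:=\{n:\mu_n(E_n^{k+1}\,\Delta\,E_n^k)<2^{-k+2}\}\in\ul$; put $R_N:=T_1\cap\cdots\cap T_N$ and $j(n):=\max(\{N\le n:n\in R_N\}\cup\{1\})$, so that $\lim_{n\in\ul}j(n)=\infty$. I would then take $B_n:=E_n^{j(n)}$: a telescoping estimate valid for $\ul$-almost every $n$ — those with $j(n)\ge M$ and $n\in R_{j(n)}$, on which $\mu_n(E_n^{j(n)}\,\Delta\,E_n^M)\le\sum_{k=M}^{j(n)-1}\mu_n(E_n^{k+1}\,\Delta\,E_n^k)<\sum_{k\ge M}2^{-k+2}$ — yields $\mu_\ul([B_n]_\ul\,\Delta\,[E_n^M]_\ul)\le\sum_{k\ge M}2^{-k+2}$, which tends to $0$ as $M\to\infty$; combined with $\mu_\ul(A_\ul\,\Delta\,[E_n^M]_\ul)<2^{-M+1}$ this forces $\mu_\ul(A_\ul\,\Delta\,[B_n]_\ul)=0$.

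The step I expect to be the main obstacle is the countable-saturation argument in (1) — reducing a countable cover of an internal set to a finite one — together with the ultrafilter bookkeeping it requires: engineering an index function $n\mapsto N(n)$ that tends to infinity along $\ul$ while still keeping $\ul$-almost every coordinate under control. Once this is in place, the measurability check and the diagonalisation in (2) are routine and rely only on the outer-measure and Carath\'eodory formalism already set up in the preceding propositions.
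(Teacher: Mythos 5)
Your proof is correct, and it takes a genuinely different route from the paper's in the two substantive steps, while the Carath\'eodory measurability check (splitting a cover along $A_n$ and its complement and using additivity of each $\mu_n$ plus the ultralimit) is essentially identical to the paper's. For the measure identity $\mu_\ul([A_n]_\ul)=\lim_{n\in\ul}\mu_n(A_n)$ and for part (2), the paper funnels everything through a single auxiliary statement, Lemma \ref{lem:mescil}: given countably many internal sets, one can produce a single internal set containing their union whose measure is the limit of the measures of the finite unions. With that lemma the lower bound on $\theta$ follows by comparing $[A_n]_\ul$ with an internal superset, and part (2) is obtained by applying the lemma once to each near-optimal cover (yielding internal supersets $[C_n^j]_\ul$ of $A_\ul$ with measure within $2^{-j}$) and a second time to the complements so as to realize $\bigcap_j[C_n^j]_\ul$ as an internal set up to a null set; the approximants are always outer approximations. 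You instead prove a countable-saturation statement — a countable cover of an internal set by internal sets admits a finite subcover — which reduces the lower bound to finite subadditivity of the $\mu_n$, and in part (2) you truncate each cover at a finite stage, so your internal approximants $[E^k_n]_\ul$ are close to $A_\ul$ only in symmetric difference rather than containing it, and you conclude by a direct diagonalization with a telescoping (Cauchy-type) estimate. What each approach buys: the paper's lemma is stated once and reused three times, and its outer approximations make the regularity statement in (2) fall out with no metric estimates; your saturation argument is the conceptually standard ``internal sets form a compact class'' fact, it makes the computation of $\mu_\ul([A_n]_\ul)$ essentially immediate, and your part (2) shows that the measure algebra argument can be run as a completeness/Cauchy argument without ever producing internal supersets. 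Both proofs ultimately rest on the same ultrafilter bookkeeping — an index function forced to tend to infinity along $\ul$ while staying controlled on an $\ul$-large set — which you perform twice in separate guises and the paper performs once inside Lemma \ref{lem:mescil}; your handling of the edge cases (choice of $x_n$ when $N(n)=0$, the default value of $j(n)$, and the identification of $[D_n]_\ul=\emptyset$ with $D_n=\emptyset$ for $\ul$-almost every $n$) is careful and correct.
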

\begin{proof}
  $(1)$ Let us prove that for every family $\{A_n\in\rb_n\}_n$, we have that $[A_n]_\ul\in\rb_\ul$. Consider a subset $B_\ul\subset \fx_\ul$, a real number $\eps>0$ and a family $C_{n}^i\in\rb_n$ such that \[B_\ul\subset\cup_i [C_{n}^i]_\ul\text{ and }\sum_i \theta([C_{n}^i]_\ul)\leq \theta(B_\ul)+\eps.\]

  So we have
  \begin{align*}
    \theta(B_\ul\cap [A_n]_\ul)+\theta(B_\ul\setminus [A_n]_\ul)\leq &\theta(\cup_i ([C_n^i]_\ul\cap[A_n]_\ul))+\theta(\cup_i([C_n^i]_\ul\setminus [A_n]))\\
=&\theta(\cup_i [C_n^i \cap A_n]_\ul)+\theta(\cup_i[C_n^i \setminus A_n])\\
\leq&\sum_i\lim_{n\in\ul}\left(\mu_n(C_{n}^i\cap A_n)+\mu_n(C_{n}^i\setminus A_n)\right)\\=&\sum_i \lim_{n\in\ul}\mu_n(C_{n}^i)\\ \leq&\theta(B_u)+\eps.
  \end{align*}
  As $\eps$ is arbitrary, $[A_n]_\ul$ is $\mu_\ul$-measurable.

As we have observed before, given two subsets $[B^1_n]_\ul$ and $[B^2_n]_\ul$ of $X_\ul$, we have that $[B_n^1]_\ul\cup [B_n^2]_\ul=[B_n^1\cup B_n^2]_\ul$. We remark that the same property does not hold for countable unions but the following lemma shows that a similar property holds in the measurable setting. 

\begin{lem}\label{lem:mescil}
  For every countable family $\{B_n^i\in\rb_n\}_{i,n\in\bn}$ there is a family $\{C_n\in\rb_n\}_{n\in\bn}$ such that \[\cup_i[B_n^i]_\ul\subset [C_n]_\ul\ \text{ and }\  \lim_{n\in\ul}\mu_n(C_n)=\lim_{i\to\infty}\lim_{n\in\ul}\mu_n(\cup_{j=1}^iB_n^j).\] 
\end{lem}
\begin{proof}
The proof is a standard diagonal argument for ultraproducts. For every $n$ and $i$, we set $D_n^i:=\cup_{j=1}^i B_n^j$. For $i\geq 1$, put
\[L_i:=\left\lbrace m\in\{i,i+1,\ldots\}:\ \left|\lim_{n\in\ul}\mu_n(D_{n}^{i})-\mu_m(D_{m}^{i})\right|\leq \frac{1}{2^i}\right\rbrace.\]
Observe that $L_i\in \ul$. We define the function \[f:\bn\to \bn\quad\text{ as }f(n):=\left\lbrace\begin{array}{lc}\max\{i : n\in L_i\} &\text{for }n\in \cup_i L_i\\ 1&\text{ otherwise}
\end{array}\right.\]

By construction $f(n)\leq n$, $f(n)$ tends to infinity as $n\to\ul$ and, for every $m$ in a subset $I_0\in \ul$, we have $|\lim_{n\in\ul}\mu_n(D_{n}^{f(m)})-\mu_m(D_{m}^{f(m)})|\leq 2^{-f(m)}$. 

We set $C_n:=D_{n}^{f(n)}$. For every $i\in\bn$ and for every $n\in L_i$, we have that $f(n)\geq i$, hence $C_n\supset D_n^{i}$. Since this is true for every $i$, we obtain that $[C_n]_\ul\supset \cup_i [D_n^i]_\ul=\cup_i [B_n^i]_\ul$ which implies that  \[\lim_{n\in\ul}\mu_n(C_n)\geq\lim_{i\to\infty}\lim_{n\in\ul}\mu_n(\cup_{j=1}^iB_n^j).\]

 Finally \[\mu_m(C_m)=\mu_m(D_{m}^{f(m)})\leq \lim_{n\in\ul}\mu_n(D_{n}^{f(m)})+\frac{1}{2^{f(m)}}\leq \lim_{i\to\infty}\lim_{n\in\ul}\mu_n(\cup_{j=1}^iB_{n}^j)+\frac{1}{2^{f(m)}}.\qedhere\]
\end{proof}

Let us now compute the measure of $[A_n]_\ul$. By definition of $\theta$, we must have $\theta([A_n]_\ul)\leq \lim_\ul \mu_n(A_n)$. For the reverse inequality, fix $\eps>0$ and consider a countable family $\{B_n^i\in\rb_n\}_{i,n}$ such that \[\left|\theta([A_n]_\ul)-\sum_{i\in\bn} \lim_{n\in\ul} \mu_n(B_{n}^i)\right|<\eps.\]

By Lemma \ref{lem:mescil}, there is a family $\{C_n\in\rb_n\}_n$ such that $[C_n]_\ul\supset\cup_i[B_n^i]_\ul\supset [A_n]_\ul$ which satisfies 
\begin{align*}
  \lim_{n\in\ul}\mu_n(A_n)\leq\lim_{n\in\ul}\mu_n(C_n)\leq\lim_{i\to\infty}\lim_{n\in\ul}\mu_n(\cup_{j=1}^iB^j_n) \leq\sum_{j=0}^\infty\lim_{n\in\ul}\mu_n(B^j_n)\leq \theta([A_n]_\ul)+\eps.
\end{align*}
Since $\eps$ is arbitrary, we obtain that  $\theta([A_n]_\ul)= \lim_\ul \mu_n(A_n)$.

\medskip

$(2)$ Let $A_\ul\in\rb_\ul$ be a measurable subset.  By definition of $\theta$, for every $j\in\bn$, there is a countable family $\{B_n^{i,j}\in\rb_n\}_{n,i}$ such that \[A_\ul\subset\cup_i [B_n^{i,j}]_\ul\ \text{ and }\  \sum_{i\in\bn} \lim_\ul \mu_n(B_n^{i,j})-\mu_\ul(A_\ul)\leq 2^{-j}.\]

By Lemma \ref{lem:mescil}, for every $j\in\bn$, there is a family $\{C^j_n\in\rb_n\}_n$ such that \[A_\ul\subset \cup_i [B_n^{i,j}]_\ul\subset [C_n^j]_\ul\ \text{ and }\ \mu_\ul([C_n^j]_\ul)-\mu_\ul(A_\ul)\leq 2^{-j}.\]
Observe that \[\fx_\ul\setminus \bigcap_{j\in\bn} [C_{n}^j]_\ul=\bigcup_{j\in \bn}\fx_\ul\setminus [C_n^j]_\ul=\bigcup_{j\in\bn}[\fx_n\setminus C_n^j]_\ul,\] so again by Lemma \ref{lem:mescil}, there is a family $\{D_n\in \rb_n\}_n$ such that \[\mu_\ul\left([D_n]_\ul\Delta (\cup_j[\fx_n\setminus C_n^j]_\ul)\right)=0.\] Hence if we define $B_n:=\fx_n\setminus D_n$, we have $\mu_\ul([B_n]_\ul\Delta (\cap_j[ C_n^j]_\ul))=0$ and \[\mu_\ul(A_\ul\Delta [B_n]_\ul)\leq\lim_i \mu_\ul(\cap_{j=1}^i[C_n^j]_\ul\setminus A_\ul)=0.\qedhere\] 
\end{proof}

\begin{rmk}\label{rmk:measalgult}
  Proposition \ref{prop:consmeas} implies that the measure algebra of the ultraproduct of a family of probability spaces is the metric ultraproduct of their measure algebras. See \cite[Section 328]{Fremlin3}.
\end{rmk}

\subsection{Maharam-type}

We now prove that the ultraproduct of a family of finite or standard probability spaces is a nice, homogeneous probability space. The following theorem is a special case of \cite{Jin2000} (which is written in the language of non-standard analysis). 

\begin{thm}\label{thm:maharamultra}
  Let $\{(\fx_n,\rb_n,\mu_n)\}_n$ be a sequence of diffuse standard probability spaces or a sequence of finite spaces equipped with their uniform counting measure such that $\lim_{n\in\ul} |\fx_n|=\infty$. Then the measure ultraproduct $(\fx_\ul,\rb_\ul,\mu_\ul)$ is measurably isomorphic to $(\{0,1\}^\br,\nu^\br)$ where $\nu$ is the normalized counting measure on $\{0,1\}$ and $\nu^\br$ is the product measure. That is, the measure algebras $\malg(\fx_\ul,\mu_\ul)$ and $\malg(\{0,1\}^\br,\nu^{\br})$ are isomorphic. 
\end{thm}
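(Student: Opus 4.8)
The plan is to invoke Maharam's classification theorem for homogeneous measure algebras: a probability algebra is isomorphic to $\malg(\{0,1\}^\kappa,\nu^\kappa)$ precisely when it is \emph{homogeneous of Maharam type} $\kappa$, i.e.\ every relatively nonzero principal ideal has the same density character $\kappa$ (see \cite[Section 331]{Fremlin3}). Since $\{0,1\}^\br$ has Maharam type equal to the cardinality of the continuum, it suffices to show that $\malg(\fx_\ul,\mu_\ul)$ is homogeneous of Maharam type $\mathfrak{c}$. By Remark \ref{rmk:measalgult} the measure algebra of the ultraproduct is the metric ultraproduct of the measure algebras $\malg(\fx_n,\mu_n)$, so I will work directly with sequences $[A_n]_\ul$ and the metric $d([A_n]_\ul,[B_n]_\ul)=\lim_{n\in\ul}\mu_n(A_n\Delta B_n)$, using Proposition \ref{prop:consmeas} to pass freely between measurable subsets of $\fx_\ul$ and their representing sequences.

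First I would establish the \textbf{upper bound} on the Maharam type. The measure algebra $\malg(\fx_\ul,\mu_\ul)$ has density character at most $\mathfrak{c}$: indeed, each factor algebra $\malg(\fx_n,\mu_n)$ is separable (it is finite or standard), so $\prod_n \malg(\fx_n,\mu_n)$ has cardinality at most $\mathfrak{c}^{\aleph_0}=\mathfrak{c}$, hence the same bound holds for the ultraproduct and a fortiori for its density. The nontrivial direction is the \textbf{lower bound together with homogeneity}: I must show that for every $A_\ul\in\rb_\ul$ with $\mu_\ul(A_\ul)>0$, the relative algebra $\{B_\ul\subseteq A_\ul\}$, rescaled to a probability algebra, has density character $\geq\mathfrak{c}$. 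Represent $A_\ul=[A_n]_\ul$ with $\lim_{n\in\ul}\mu_n(A_n)=\mu_\ul(A_\ul)>0$; then along a set in $\ul$ we have $\mu_n(A_n)$ bounded below, and since each $(\fx_n,\mu_n)$ is diffuse standard or finite-uniform-with $|\fx_n|\to\infty$, the restriction $(A_n,\mu_n|_{A_n})$ is again diffuse-standard-like of total mass $\to$ a positive constant, with no atoms of size exceeding $1/|\fx_n|\to 0$. So it is enough to exhibit, inside $\prod_n \malg(A_n,\mu_n|_{A_n})$, a family of $2^{\aleph_0}$ elements pairwise at $d_\ul$-distance bounded away from $0$.

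The standard construction (cf.\ the surjectivity argument of Remark \ref{rmk:card}) is this: for each $n$ split $A_n$ into $k_n\to\infty$ pieces of nearly equal measure $\mu_n(A_n)/k_n$; for a subset $S\subseteq\bn$ let $A_n^S$ be the union of those pieces whose index lies in $S\cap\{1,\dots,k_n\}$ \emph{when} one encodes $S$ suitably so that two distinct infinite binary strings give sets whose symmetric difference has density asymptotically around $\tfrac12\mu_\ul(A_\ul)$. More precisely I would fix an injection of $\{0,1\}^{\bn}$ into the unit interval, partition each $A_n$ into $2^n$ dyadic blocks of equal measure, and to a string $x\in\{0,1\}^\bn$ associate the sequence $B^x_n:=$ union of the blocks indexed by the length-$n$ prefixes of reals in a fixed positive-measure ``cylinder'' determined by $x$; distinct $x$'s eventually have disjoint cylinders, forcing $\liminf_n \mu_n(B^x_n\Delta B^{y}_n)>0$. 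This yields $\mathfrak{c}$ many elements of $\malg(\fx_\ul,\mu_\ul)$ below $A_\ul$ that are $d_\ul$-separated, so the relative density is exactly $\mathfrak{c}$; homogeneity follows since the value is the same for every $A_\ul$. Combined with the upper bound, Maharam's theorem gives $\malg(\fx_\ul,\mu_\ul)\cong\malg(\{0,1\}^{\mathfrak c},\nu^{\mathfrak c})=\malg(\{0,1\}^\br,\nu^\br)$, completing the proof.

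The main obstacle is the \textbf{lower bound / homogeneity} step: one needs a clean diagonal-ultraproduct argument (in the spirit of Lemma \ref{lem:mescil}) showing the $2^{\aleph_0}$ chosen sequences are genuinely pairwise separated for $d_\ul$ — the bookkeeping of how a fixed real's dyadic expansion selects blocks in the $n$-th factor, uniformly over an uncountable family, is where care is required, and it is precisely here that the hypotheses ``diffuse'' or ``$|\fx_n|\to\infty$'' are used (to guarantee the $k_n\to\infty$ equipartitions exist with vanishing atom size).
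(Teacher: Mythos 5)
Your overall frame is the same as the paper's: bound the Maharam type above by $\mathfrak{c}$ (via the metric ultraproduct of separable algebras, Remark \ref{rmk:measalgult}), show every nonzero relative algebra has type at least $\mathfrak{c}$, and invoke Maharam's classification (Theorem \ref{thm:maharam}). But the step you yourself single out as the main obstacle --- producing $\mathfrak{c}$ many elements below $A_\ul$ that are uniformly $d_\ul$-separated --- is exactly where your construction breaks. As written, ``distinct $x$'s eventually have disjoint cylinders'' cannot hold for a continuum-sized family: a probability space admits only countably many pairwise disjoint sets of positive measure, so you cannot attach to each $x\in\{0,1\}^{\bn}$ a positive-measure cylinder $C_x$ with the $C_x$ pairwise disjoint. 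Under the alternative reading, where $B^x_n$ is the single block indexed by the length-$n$ prefix of $x$, one has $\mu_n(B^x_n)\le 2^{-n}\mu_n(A_n)\to 0$, so $\mu_n(B^x_n\Delta B^y_n)\to 0$ rather than staying bounded below. Note also two smaller issues: pairwise positive separation with constants depending on the pair is not enough to force density $\ge\mathfrak{c}$ (you need one $\eps>0$ working for all pairs), and you are silently using that Maharam type coincides with the density character, whereas the paper (and Fremlin) define it via $\sigma$-generation, so that identification would need a citation or the direct route below.

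The missing idea, which is how the paper proceeds, is \emph{independence}. In each factor choose a countable family $\{B^i_n\}_i$ of mutually independent sets of relative measure $1/2$ (in the finite uniform case, $g(n)$ of them inside a subset $C_n\subset A_n$ with $|C_n|=2^{g(n)}$, $2^{g(n)}\le|A_n|\le 2^{g(n)+1}$), and for each class $f$ in the ultrapower $\bn_\ul$ set $B^f_\ul:=[B^{f(n)}_n]_\ul$. If $f$ and $f'$ differ $\ul$-almost always, then $B^f_\ul$ and $B^{f'}_\ul$ are independent of measure $1/2$, so in particular at $d_\ul$-distance exactly $1/2$; since $|\bn_\ul|=\mathfrak{c}$ by Remark \ref{rmk:card}, Lemma \ref{lem:mahaind} (Fremlin 331J) gives type at least $\mathfrak{c}$ without any appeal to density character, and uniform $1/2$-separation would equally repair your density argument. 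For homogeneity, the diffuse case reduces to the full space because $(A_n,\mu_n/\mu_n(A_n))$ is again diffuse standard (using Proposition \ref{prop:consmeas} to write $A_\ul=[A_n]_\ul$), while in the finite case the paper runs the construction inside $[C_n]_\ul$ and then uses monotonicity of the Maharam type under passing to ideals. With that replacement your argument goes through; as it stands, the lower bound is a genuine gap.
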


Observe that $\fx_\ul$ and $\{0,1\}^\br$ are not isomorphic as sets: they do not have the same cardinality. To prove the theorem, we recall the notion of Maharam type, see \cite[331F]{Fremlin3}. 

\begin{dfn} Let $(\fx,\mu)$ be a probability space and let us denote by $\fa=\malg(\fx,\mu)$ its measure algebra. 
  \begin{itemize}
  \item A subset $\ca\subset \fa$ $\sigma$-\textbf{generates}, if $\fa$ is the smallest $\sigma$-subalgebra of $\fa$ containing $\ca$.
  \item The \textbf{Maharam type} of the measure algebra $\fa$ is the smallest cardinal of any subset of $\fa$ which $\sigma$-generates $\fa$.
  \item A measure algebra $\fa$ is \textbf{homogeneous} if the Maharam type of $\fa$ is equal to the Maharam type of $\malg(A,\mu/\mu(A))$ for every $A\in\fa$. 
  \end{itemize}
\end{dfn}

All the homogeneous probability measure algebras which have the same Maharam type are isomorphic, see \cite[331L]{Fremlin3}. 

\begin{thm}\label{thm:maharam}
  Every homogeneous probability measure algebra $\fa$ is isomorphic to the measure algebra of $(\{0,1\}^Z,\nu^Z)$ for a set $Z$ which has the cardinality of the Maharam type of $\fa$. 
\end{thm}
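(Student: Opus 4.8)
The plan is to deduce the statement from the uniqueness theorem quoted just above (\cite[331L]{Fremlin3}): any two homogeneous probability measure algebras of the same Maharam type are isomorphic. So, writing $\kappa$ for the Maharam type of $\fa$ and fixing a set $Z$ with $|Z|=\kappa$, it is enough to prove that $\malg(\{0,1\}^Z,\nu^Z)$ is homogeneous with Maharam type exactly $\kappa$; then it is isomorphic to $\fa$ and we are done. We may assume $\kappa$ is infinite: for $\kappa=0$ both algebras are trivial, and there is no homogeneous probability measure algebra of finite nonzero Maharam type, since such an algebra would have no atoms and an atomless probability measure algebra already has Maharam type at least $\aleph_0$.

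The main tool for the lower bounds is the elementary observation that if a probability measure algebra contains, for some $\delta>0$, a family of cardinality $\lambda$ that is $\delta$-separated for the metric $\rho(B,B'):=\mu(B\Delta B')$ with $\lambda$ infinite, then its Maharam type is at least $\lambda$. Indeed, a $\sigma$-generating family $\ca$ generates a Boolean subalgebra $\cb$ of cardinality $\max(|\ca|,\aleph_0)$, and since $\sigma$-subalgebras of a measure algebra are $\rho$-closed, the $\rho$-closure of $\cb$ is the whole algebra; thus $\cb$ is $\rho$-dense, the open $\delta/2$-balls around the points of the separated family are pairwise disjoint and each meets $\cb$, so $\lambda\leq\max(|\ca|,\aleph_0)$, and as the algebra is infinite this forces $|\ca|\geq\lambda$.

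Now I would compute the Maharam type of $\malg(\{0,1\}^Z,\nu^Z)$. For the upper bound, the cylinders $E_z:=\{\omega\in\{0,1\}^Z:\omega(z)=1\}$, $z\in Z$, generate the product $\sigma$-algebra by definition of $\nu^Z$, hence their classes $\sigma$-generate the measure algebra, so the Maharam type is at most $\kappa$. For the lower bound, $\nu^Z(E_z\Delta E_{z'})=\tfrac12$ for $z\neq z'$, so $\{E_z\}_{z\in Z}$ is a $\tfrac12$-separated family of cardinality $\kappa$, and the observation above gives Maharam type at least $\kappa$. For homogeneity, let $A$ with $\nu^Z(A)>0$ be given; the classes $E_z\cap A$ $\sigma$-generate $\malg(A,\nu^Z/\nu^Z(A))$, so its Maharam type is at most $\kappa$. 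For the reverse inequality, note that every element of $\malg(\{0,1\}^Z)$ depends, up to a null set, on only countably many coordinates, being a $\rho$-limit of finite Boolean combinations of the $E_z$; hence a Lebesgue-density (martingale convergence) argument in that countable subproduct yields a basic cylinder $C$ on a finite set $F\subseteq Z$ of coordinates with $\nu^Z(A\cap C)>\tfrac34\,\nu^Z(C)$. Then for distinct $z,z'\in Z\setminus F$ we have $\nu^Z\big((E_z\Delta E_{z'})\cap A\big)\geq \nu^Z\big((E_z\Delta E_{z'})\cap C\big)-\nu^Z(C\setminus A)\geq\tfrac14\,\nu^Z(C)>0$, so $\{E_z\cap A:z\in Z\setminus F\}$ is a family of cardinality $|Z\setminus F|=|Z|=\kappa$ that is separated by the fixed positive constant $\tfrac14\,\nu^Z(C)/\nu^Z(A)$ in $\malg(A)$. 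By the observation its Maharam type is at least $\kappa$, hence equal to $\kappa$. Thus $\malg(\{0,1\}^Z,\nu^Z)$ is homogeneous of Maharam type $\kappa$, and \cite[331L]{Fremlin3} finishes the proof.

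The step I expect to be the main obstacle is the lower bound in the homogeneity argument: extracting from an arbitrary positive-measure set $A$ a family of cardinality $\kappa$ that is separated by a single fixed positive constant. This is exactly where the reduction to countably many coordinates and the density/cylinder approximation are needed, and where one must check that discarding the finite coordinate set $F$ does not shrink the separated family below cardinality $\kappa$. Everything else — the upper bounds, the separated-family observation, and the bookkeeping with the metric on the measure algebra and with cardinal arithmetic — is routine.
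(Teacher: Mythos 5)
The paper gives no proof of this statement: it is quoted as Maharam's classification theorem, with the uniqueness half cited from \cite[331L]{Fremlin3}, and it is only used later as a black box in the proof of Theorem \ref{thm:maharamultra}. Your argument is correct, and it is essentially the standard proof of the missing companion fact (that $\malg(\{0,1\}^Z,\nu^Z)$ is homogeneous of Maharam type $|Z|$ for infinite $Z$), after which the cited uniqueness statement finishes the job. The individual steps check out: $\sigma$-subalgebras are closed for the metric $\rho(B,B')=\mu(B\Delta B')$, so a $\sigma$-generating set of size $\lambda$ gives a $\rho$-dense Boolean subalgebra of size $\max(\lambda,\aleph_0)$ and your separated-family bound follows; the reduction of a positive-measure $A$ to countably many coordinates and the density/cylinder step do produce the uniformly separated family $\{E_z\cap A\}_{z\in Z\setminus F}$ of full cardinality, with the constants as you state. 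Note that this last step is exactly where your metric-separation observation is needed in place of the paper's Lemma \ref{lem:mahaind} (Fremlin 331J): the traces $E_z\cap A$ are in general neither independent nor of measure $\tfrac12$ in $\malg(A,\nu^Z/\nu^Z(A))$, so the independence lemma does not apply directly there, whereas separation by a fixed constant survives intersecting with $A$. Two assertions you make without proof deserve a line each if you write this out: that the classes $E_z\cap A$ $\sigma$-generate $\malg(A)$ (use that $B\mapsto B\cap A$ is an order-continuous Boolean surjection onto the principal ideal), and the exclusion of finite nonzero Maharam type (an atom would force type $0$ by homogeneity, and an atomless probability algebra is infinite, hence of type at least $\aleph_0$); both are routine and your sketch is otherwise complete.
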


We can now prove the theorem.

\begin{proof}[Proof of Theorem \ref{thm:maharamultra}]
First observe that $\malg(\fx_\ul,\mu_\ul)$ has at most the cardinality of the continuum, because by Remark \ref{rmk:measalgult}, $\malg(\fx_\ul,\mu_\ul)$ is the metric ultraproduct of a family of separable metric spaces. So we have to show that the Maharam type of $\malg(A_\ul,\mu_\ul/\mu_\ul(A))$ is at least the continuum for every $A_\ul\subset \fx_\ul$ measurable and non negligible. 

We start showing the result when $(\fx_n,\rb_n,\mu_n)$ is a diffuse standard probability space for every $n$. By Proposition \ref{prop:consmeas}, there is a sequence $\{A_n\in\rb_n\}_n$ such that $A_\ul=[A_n]_\ul$ up to measure $0$. Since for every $n$, the measure space $(A_n,\rb_n\bigr|_{A_n},\mu_n/\mu_n(A_n))$ is also a standard probability space, it is enough to show that the Maharam type of $\malg(\fx_\ul,\mu_\ul)$ is at least the continuum. For this we will use the following standard result, which is proved in \cite[331J]{Fremlin3}.

\begin{lem}\label{lem:mahaind}
  Let $\fa$ be a measure algebra and let $Z$ be a set. Suppose that there is a family $\{A_z\}_{z\in Z}$ of measurable mutually independent sets of measure $\mu(A_z)=1/2$. Then the Maharam type of $\fa$ is greater or equal to the cardinality of $Z$. 
\end{lem}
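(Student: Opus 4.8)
The plan is to prove the combinatorial heart of the statement: \emph{every subset $\ca\subseteq\fa$ that $\sigma$-generates $\fa$ has cardinality at least $|Z|$}. Since the Maharam type of $\fa$ is by definition the least cardinality of a $\sigma$-generating subset, this immediately yields the lemma. The engine of the argument is the elementary computation that, for $z\ne z'$, mutual independence and $\mu(A_z)=\mu(A_{z'})=\tfrac12$ give $\mu(A_z\Delta A_{z'})=\tfrac12+\tfrac12-2\cdot\tfrac14=\tfrac12$; so $\{A_z\}_{z\in Z}$ is a $\tfrac12$-separated subset of $\fa$ for the metric $d(a,b)=\mu(a\Delta b)$, and such a set cannot be approximated to within $\tfrac14$ using too few elements.

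First I would dispose of the case of finite $Z$: if $|Z|=n$, the $2^n$ cells $\bigcap_{z\in W}A_z\cap\bigcap_{z\notin W}(1\setminus A_z)$, $W\subseteq Z$, are pairwise disjoint and each of measure $2^{-n}>0$, and the standard fact that a finitely generated Boolean algebra has at most $2^m$ atoms when $m$ is the number of generators forces any $\sigma$-generating set to have at least $n$ elements. So assume $Z$ is infinite. Then the $A_z$ are pairwise distinct by the computation above, hence $\fa$ is infinite; consequently $\fa$ is not $\sigma$-generated by any finite set (a finitely generated Boolean algebra is finite and, being finite, already $\sigma$-complete, so it would equal $\fa$). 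Thus every $\sigma$-generating $\ca$ is infinite.

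Next I would pass to the plain Boolean subalgebra $\mathcal D\subseteq\fa$ generated by $\ca$ (closed only under finite meets, joins and complements). Two points, both routine: $|\mathcal D|=|\ca|$, since every element of $\mathcal D$ is a Boolean word in finitely many members of the infinite set $\ca$; and $\mathcal D$ is $d$-dense in $\fa$. For the density, the observation is that the $d$-closure $\overline{\mathcal D}$ is again a $\sigma$-subalgebra: it is closed under complements and finite joins because those operations are $d$-continuous, and it is closed under countable increasing joins because $a_n\uparrow a$ forces $d(a,a_n)=\mu(a)-\mu(a_n)\to 0$ by countable additivity of $\mu$. Being a $\sigma$-subalgebra containing $\ca$, $\overline{\mathcal D}$ must equal $\fa$. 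I expect this identification — that the $\sigma$-algebra generated by $\ca$ coincides with the $d$-closure of the Boolean algebra generated by $\ca$ — to be the only step requiring a moment's thought; alternatively it, and indeed the whole lemma, can be quoted from \cite[331J]{Fremlin3}.

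Finally, using density, choose for each $z\in Z$ an element $c_z\in\mathcal D$ with $\mu(A_z\Delta c_z)<\tfrac14$. For $z\ne z'$ the triangle inequality for $d$ gives
$\mu(c_z\Delta c_{z'})\ge\mu(A_z\Delta A_{z'})-\mu(A_z\Delta c_z)-\mu(A_{z'}\Delta c_{z'})>\tfrac12-\tfrac14-\tfrac14=0$,
so $c_z\ne c_{z'}$. Hence $z\mapsto c_z$ is an injection of $Z$ into $\mathcal D$, giving $|Z|\le|\mathcal D|=|\ca|$, which is what we wanted. Taking $\ca$ of minimal cardinality completes the proof.
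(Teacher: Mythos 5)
Your argument is correct. Note, however, that the paper does not prove this lemma at all: it is quoted as a standard result from \cite[331J]{Fremlin3}, so there is no in-paper proof to compare with, and what you have written is essentially the standard argument underlying that reference. The three ingredients all check out: independence gives $\mu(A_z\Delta A_{z'})=\tfrac12$ for $z\neq z'$, so the family is uniformly separated in the measure metric $d(a,b)=\mu(a\Delta b)$ (which is a genuine metric because the measure on a measure algebra is strictly positive); the identification of the $\sigma$-subalgebra generated by $\ca$ with the $d$-closure of the Boolean subalgebra $\mathcal{D}$ generated by $\ca$ is legitimate, since complementation and finite joins are $d$-continuous and an increasing sequence converges in $d$ to its supremum by countable additivity, so $\overline{\mathcal{D}}$ is a $\sigma$-subalgebra containing $\ca$; and the counting step ($|\mathcal{D}|=|\ca|$ for infinite $\ca$, plus choosing $c_z\in\mathcal{D}$ within $\tfrac14$ of $A_z$ and using the separation to see $z\mapsto c_z$ is injective) is sound. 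Your separate treatment of finite $Z$ via the $2^{n}$ cells of positive measure $2^{-n}$ is also fine, and is needed since the density--counting argument only gives information when $\ca$ is infinite. So your proposal supplies a self-contained proof where the paper simply defers to Fremlin; the only thing the citation buys over your argument is brevity.
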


We now exhibit a continuum family of independent sets of $(\fx_\ul,\rb_\ul,\mu_\ul)$. For every $n$, take a countable family $\cb_n =\{B_n^i\}_i$ of measurable mutually independent set of $\fx_n$ of measure $1/2$. For every function $f:\bn\to\bn$, put $B^f_\ul:=[B_n^{f(n)}]_\ul$.  If we denote with $\bn_\ul$ the ultraproduct of $\{\bn,\bn,\ldots\}$, then for every $f\in\bn_\ul$ the measurable subset $B^f_\ul$ is well-defined, since it does not depend on the values of $f$ on subsets outside $\ul$. Observe also that if $f_1,\ldots,f_k$  differ $\ul$-almost always, then $B_\ul^{f_1},\ldots,B_\ul^{f_k}$ are independent. Therefore the family $\{B^f_\ul\}_{f\in\bn_\ul}$ is a family of measurable mutually independent sets of measure $1/2$. The cardinality of this family is the cardinality of $\bn_\ul$ which is the continuum by Remark \ref{rmk:card}. Hence Lemma \ref{lem:mahaind} concludes the proof of the theorem in the diffuse case.

\vspace{0.1cm}

The same strategy works for finite uniform spaces. Suppose that for every $n$, the measure space $(\fx_n,\mu_n)$ is a finite uniform space and suppose that $\lim_{n\in\ul} |\fx_n|=\infty$. For every $A_\ul\in \rb_\ul$, by Proposition \ref{prop:consmeas}, there is a sequence $\{A_n\}_n$ such that $A_\ul=[A_n]_\ul$ up to measure $0$. Let us denote by $g:\bn\to\bn$ the function such that $2^{g(n)}\leq |A_n|\leq 2^{g(n)+1}$. For every $n$, consider $C_n\subset A_n$ a subset of $2^{g(n)}$-elements. Observe that $\lim_{n\in\ul}g(n)=\infty$ and $\mu_\ul([C_n]_\ul)\geq \mu_\ul(A_\ul)/2$. For every $n$, there is a family $\cb_n=\{B^1_n,\ldots,B^{g(n)}_n\}$ of mutually independent sets such that $|B_n^i|=|C_n|/2$ for every $n$ and $i\leq g(n)$. As before, for every function $f:\bn\to\bn$ such that $f(n)\leq g(n)$, we can define $B^f_\ul:=[B_n^{f(n)}]_\ul$. If we denote with $Z_\ul$ the ultraproduct of $Z_n=\{1,\ldots,g(n)\}$, then, as before, for every $f\in Z_\ul$ the subset is well defined $B^f_\ul$ and  if $f_1,\ldots,f_k$  differ $\ul$-almost always, then $B_\ul^{f_1},\ldots,B_\ul^{f_k}$ are independent. Hence the family $\{B^f_\ul\}_{f\in Z_\ul}$ is a family of measurable mutually independent sets. Again by Remark \ref{rmk:card}, the cardinality of $Z_\ul$ is the continuum, so Lemma \ref{lem:mahaind} implies that the Maharam type of $[C_n]_\ul$ is the continuum. Observe that the Maharam type is monotone under taking ideals \cite[331H(c)]{Fremlin3}, hence also the Maharam type of $\malg(A_\ul,\mu_\ul/\mu_\ul(A))$ is the continuum. So the proof theorem is concluded. 
\end{proof}

\subsection{Automorphisms}

Let $(\fx,\mu)$ be a probability space and let $\aut(\fx,\mu)$ be its group of measure preserving automorphisms.

  \begin{itemize}
  \item The \textbf{uniform topology} on $\aut(\fx,\mu)$ is the topology defined by the metric \[\delta(S,T):=\mu(\{x\in\fx : Tx\neq Sx\}).\]
  \item The \textbf{weak topology} on $\aut(\fx,\mu)$ is the topology for which $T_n$ tends to $T$ if \[\mu(T_n(A)\Delta T(A))\to 0,\quad \forall A\subset \fx\text{ measurable}.\]
  \end{itemize}

\begin{exm} 
  Let $X=\{1,\ldots,n\}$ and let $\mu_n$ be the normalized counting measure on $X$. The group $\aut(X,\mu_n)$ is the symmetric group over $n$ elements $S_n$. The uniform topology is induced by the metric \[\delta(\sigma,\tau)=\frac{1}{n}\left|\{i : \sigma(i)\neq \tau(i)\}\right|.\]

  The metric $\delta$ is also called the Hamming distance.
\end{exm}

\begin{prop}\label{prop:ultrainner}
  Let $\{(\fx_n,\mu_n)\}_{n\in\bn}$ be a family of probability spaces. Then the metric-ultraproduct of the family $\{(\aut(\fx_n,\mu_n),\delta_n)\}_n$ embeds isometrically in $(\aut(\fx_\ul,\mu_\ul),\delta_{\ul})$.
\end{prop}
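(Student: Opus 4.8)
The plan is to realise the embedding explicitly by the \emph{pointwise} ultraproduct of automorphisms. Given a sequence $(T_n)_n$ with $T_n\in\aut(\fx_n,\mu_n)$, define $T_\ul\colon\fx_\ul\to\fx_\ul$ by $T_\ul([x_n]_\ul):=[T_nx_n]_\ul$; this is well defined as a map of sets and is a bijection with inverse $[y_n]_\ul\mapsto[T_n^{-1}y_n]_\ul$. (Note that each $\delta_n$ is bounded by $1$, so the metric ultraproduct of the $(\aut(\fx_n,\mu_n),\delta_n)$ is defined.) I would then check: (i) $T_\ul\in\aut(\fx_\ul,\mu_\ul)$; (ii) the assignment $(T_n)_n\mapsto T_\ul$ descends to a map $\Phi$ on the metric ultraproduct; and (iii) $\Phi$ is isometric — hence injective — and, as a bonus, a group homomorphism.

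For (i), measurability and measure preservation reduce to Proposition~\ref{prop:consmeas}. Since each $T_n$ is a bijection one checks directly that $T_\ul^{-1}([A_n]_\ul)=[T_n^{-1}A_n]_\ul$ for every $\{A_n\in\rb_n\}_n$, which lies in $\rb_\ul$ with $\mu_\ul([T_n^{-1}A_n]_\ul)=\lim_{n\in\ul}\mu_n(T_n^{-1}A_n)=\lim_{n\in\ul}\mu_n(A_n)=\mu_\ul([A_n]_\ul)$ by Proposition~\ref{prop:consmeas}(1). For an arbitrary $A_\ul\in\rb_\ul$, Proposition~\ref{prop:consmeas}(2) gives $\{A_n\}_n$ with $\mu_\ul(A_\ul\Delta[A_n]_\ul)=0$; the null set $N_\ul:=A_\ul\Delta[A_n]_\ul$ can be covered, for any $\eps>0$, by a cylinder $[C_n]_\ul$ with $\lim_{n\in\ul}\mu_n(C_n)<\eps$ (using the definition of $\theta$ together with Lemma~\ref{lem:mescil}), so $T_\ul^{-1}(N_\ul)\subset[T_n^{-1}C_n]_\ul$ has $\theta$-measure $<\eps$ and is therefore null. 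Hence $T_\ul^{-1}(A_\ul)\in\rb_\ul$ and $\mu_\ul(T_\ul^{-1}(A_\ul))=\mu_\ul(A_\ul)$; applying the same argument to the sequence $(T_n^{-1})_n$ handles $T_\ul$ itself, so $T_\ul$ is a measure preserving automorphism.

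The heart of the matter is (ii)--(iii), and it rests on one identity. Fix sequences $(S_n)_n,(T_n)_n$ and set $E_n:=\{x\in\fx_n:S_nx\neq T_nx\}$. I claim that
\[
\{x_\ul\in\fx_\ul:S_\ul x_\ul\neq T_\ul x_\ul\}=[E_n]_\ul .
\]
For ``$\subseteq$'': if $x_n\notin E_n$ for $\ul$-almost all $n$, then $S_nx_n=T_nx_n$ for $\ul$-almost all $n$, so $S_\ul[x_n]_\ul=T_\ul[x_n]_\ul$. For ``$\supseteq$'' one uses that $\ul$ is an ultrafilter, not merely a filter: if $x_n\in E_n$ for $\ul$-almost all $n$, then $\{n:S_nx_n=T_nx_n\}$ is contained in the complement of a member of $\ul$, hence is not itself in $\ul$, so $[S_nx_n]_\ul\neq[T_nx_n]_\ul$. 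Consequently, by Proposition~\ref{prop:consmeas}(1),
\[
\delta_\ul(S_\ul,T_\ul)=\mu_\ul([E_n]_\ul)=\lim_{n\in\ul}\mu_n(E_n)=\lim_{n\in\ul}\delta_n(S_n,T_n),
\]
and the right-hand side is exactly the value of the metric-ultraproduct pseudo-metric on the classes of $(S_n)_n$ and $(T_n)_n$. Thus $(T_n)_n\mapsto T_\ul$ sends sequences at pseudo-metric distance $0$ to the same element of $\aut(\fx_\ul,\mu_\ul)$, so $\Phi$ is well defined, and it is isometric, in particular injective. Finally $(S_nT_n)_\ul([x_n]_\ul)=[S_nT_nx_n]_\ul=S_\ul T_\ul([x_n]_\ul)$, so $\Phi$ is a group embedding as well.

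The only genuinely delicate point is step (i): one must reconcile the purely pointwise definition of $T_\ul$ with the Carath\'eodory construction of $(\fx_\ul,\rb_\ul,\mu_\ul)$, which is why Proposition~\ref{prop:consmeas}(2) is needed to replace an arbitrary measurable set by a cylinder and then control the null-set error. Alternatively, one may bypass point maps entirely and run the same computation on measure algebras via Remark~\ref{rmk:measalgult}. The isometry statement, by contrast, is soft: it falls out for free from $\ul$ being an ultrafilter.
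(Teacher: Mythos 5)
Your proof is correct and follows the same route as the paper: both define the pointwise ultraproduct map $(T_n)_n\mapsto T_\ul$, $T_\ul[x_n]_\ul=[T_nx_n]_\ul$, and obtain the isometry from the identity $\delta_\ul(S_\ul,T_\ul)=\lim_{n\in\ul}\mu_n(\{x:S_nx\neq T_nx\})$ together with Proposition~\ref{prop:consmeas}(1). The paper's proof simply states this in three lines; your additional verifications (that $T_\ul$ preserves $\rb_\ul$ and $\mu_\ul$ on arbitrary measurable sets via Lemma~\ref{lem:mescil}, and that the map is a group homomorphism) are correct elaborations of steps the paper leaves implicit.
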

\begin{proof}
  Set $G:=\prod_n \aut(\fx_n,\mu_n)$ and define \[T:G\to \aut(\fx_\ul)\ \text{ as }\  T(g_n)_n[x_n]_\ul:=[g_nx_n]_\ul.\]
 
Given $(g_n)_n$ and $(h_n)_n$ in $G$, we have \[\delta_\ul(T(g_n)_n,T(h_n)_n)=\lim_{n\in\ul}\mu_n(\{x\in\fx_n :\ g_nx\neq h_nx\})=\lim_{n\in\ul}\delta_n(g_n,h_n),\] hence $T$ factorizes to an isometry from the metric ultraproduct of $\{(\aut(\fx_n,\mu_n),\delta_n)\}_n$ to $\aut(\fx_\ul,\mu_\ul)$. 
\end{proof}

We observe that since elements of the ultraproduct of the groups $\{(\fx_n,\mu_n)\}_{n\in\bn}$ can not act ergodically on $(\fx_\ul,\mu_\ul)$, this embedding is not surjective. 

\section{Limit of actions}

In this section we will study measure preserving actions on general probability spaces under the point of view of weak containment. We will prove that any measure preserving action on a diffuse probability space is weakly equivalent to an action on a standard probability space. This will be the key tool for understanding ultraproducts of sequences of probability measure preserving actions of a countable group $G$. We will introduce a compact, metric topology on the space of weak equivalence classes of actions which is equivalent to the topology defined in \cite{Abert2011c}, a sequence of (classes of) actions converges if all its ultraproducts are weakly equivalent and in this case, the ultraproduct is the limit. 

 We will denote by $a,b$ and $c$ the probability measure preserving actions (pmp) of $G$ on probability spaces, denoted by $(\fx_a,\mu_a),(\fx_b,\mu_b)$ and $(\fx_c,\mu_c)$ (which will not be standard in general). We will denote by $\act_d(G)$ the set of the pmp actions of $G$ on a (fixed) standard diffuse probability space and with $\act_f(G)$ the set of actions of $G$ on the finite sets $\{1,\ldots,n\}$ for $n\in\bn$, which we equip with their counting measure. We set $\act(G):=\act_d(G)\sqcup \act_f(G)$.

\begin{dfn}
  Let $a$ be a pmp action of $G$ on the probability space $(\fx_a,\mu_a)$. An action $b$ of $G$ is a \textbf{factor} of $a$, denoted $b\sqsubseteq a$, if there is a $G$-invariant isometric embedding of $\sigma$-algebras $\malg(\fx_b,\mu_b)\hookrightarrow \malg(\fx_a,\mu_a)$. 
\end{dfn}

More concretely factors of $a$ are exactly the restriction of $a$ to $G$-invariant $\sigma$-subalgebras of $\malg(\fx_a,\mu_a)$. 

\begin{rmk}
  By Theorem 343B of \cite{Fremlin3}, if $b$ is a pmp action of $G$ on the standard Borel probability space $(\fx_b,\mu_b)$ and $b$ is a factor of $a$, then there is a $G$-invariant measure preserving map $\pi:\fx_a\to \fx_b$. However, we will never use this theorem. 
\end{rmk}

Let $(\fx,\mu)$ be a probability space. We denote by $\part_k(\fx)$ the set of partitions of $\fx$ with $k$ atoms and by $\part_f(\fx)$ the set of finite partitions of $\fx$ (in what follows, $f$ will never be a natural number). For $\alpha\in\part_f(\fx)$, we will denote by $|\alpha|$ the number of atoms of $\alpha$. 

Given a pmp action $a$ of $G$, a finite subset $F\subset G$ and a partition $\alpha\in\part_f(\fx_a)$, we set \[\coeff(a,F,\alpha):=(\mu(A^i\cap g A^j))_{i,j\leq |\alpha|, g\in F}.\]

Given two pmp actions of $G$ (on the probability spaces $(\fx_a,\mu_a),(\fx_b,\mu_b)$), a finite subset $F\subset G$ and two finite partitions $\alpha=\{A_1,\ldots,A_k\}\in\part_f(\fx_a)$ and $\beta =\{B_1,\ldots,B_k\}\in\part_{|\alpha|}(\fx_b)$, we put \[\|\coeff(a,F,\alpha)-\coeff(b,F,\beta)\|_1:=\sum_{i,j\leq |\alpha|}\sum_{f\in F}|\mu_a(A_i\cap f A_j)-\mu_b(B_i\cap fB_j)|.\]

The following definition is due to Kechris, \cite{Kechris2010}.

\begin{dfn}\label{dfn:wc}
Let $a,b$ two pmp actions of $G$. We say that $a$ is \textbf{weakly contained} in $b$, and we will write $a\prec b$, if  for every $\eps>0$, for every finite subset $F\subset G$ and for every finite partition $\alpha\in\part_f(\fx_a)$ there is $\beta\in\part_{|\alpha|}(\fx_b)$ such that \[\|\coeff(a,F,\alpha)-\coeff(b,F,\beta)\|_1\leq \eps.\] 
\end{dfn}

Two actions $a$ and $b$ are \textbf{weakly equivalent}, denoted by $a\sim b$, if $a\prec b$ and $b\prec a$.

\begin{dfn}
  The \textbf{weak topology} on $\act_d(G)$ is the weakest topology for which the following sets form a base of open neighborhoods of $a\in\act_d(G)$: \begin{gather*}\{b\in \act_d(G):\ \|\coeff(a,F,\alpha)-\coeff(b,F,\alpha)\|_1<\eps\}\\ \text{ for } \alpha\in\part_f(\fx_a),\ F\subset G\text{ finite and }\eps>0.\end{gather*}
\end{dfn}

For a standard probability space $(\fx,\mu)$, we have an injective map $\act_d(G)\hookrightarrow \aut(\fx,\mu)^G$. The weak topology of $\act_d(G)$ corresponds to the product topology of the weak topology of $\aut(\fx,\mu)$. 

\subsection{WC topology}

We now define a topology equivalent to the topology defined in \cite{Abert2011c}. This topology will play a central role in the understanding of ultraproducts of actions. 

\begin{dfn}
Given two pmp actions $a,b$ of $G$, a finite subset  $F\subset G$ and $k\in\bn$, we define
  \begin{align*}
    \d_{F,\alpha}(a,b):=&\inf_{\beta\in\part_k(\fx_b)} \|\coeff(a,F,\alpha)-\coeff(b,F,\beta)\|_1\quad \text{for every }\alpha\in\part_k(\fx_a),\\
    \d_{F,k}(a,b):=&\sup_{\alpha\in\part_k(\fx_a)}\d_{F,\alpha}(a,b).
  \end{align*}
Clearly $a\prec b$ if and only if for every finite subset $F\subset G$ and $k\in\bn$, we have $\d_{F,k}(a,b)=0$. 
\end{dfn}

\begin{rmk}\label{rmk:ref}
Given two partitions $\alpha$ and $\beta$ of the probability space $(\fx,\mu)$, we say that $\alpha$ \textit{refines} $\beta$ if each atom of $\beta$ is (up to measure $0$) a union of atoms of $\alpha$. For every pmp actions $a,b$ of $G$, for every finite subset $F\subset G$ and finite partitions $\alpha,\beta\in\part_f(\fx_a)$ \[\text{if }\alpha\text{ refines }\beta\text{ then }\d_{F,\alpha}(a,b)\geq\d_{F,\beta}(a,b).\]
\end{rmk}

\begin{rmk}\label{rmk:genpart}
  Let $a$ and $b$ be two pmp actions of $G$. Let
  $\alpha_n\in\part_f(\fx_a)$ be an increasing sequence of partitions such that
  the algebra generated by $\cup_n\alpha_n$ is dense in $\malg(\fx_a,\mu_a)$. Then $a\prec b$ if and only if for every $F\subset G$ and $n\in\bn$, we have $\d_{F,\alpha_n}(a,b)=0$.

In fact, we have to show that for every finite partition $\alpha\in\part_f(\fx_a)$ and finite subset $F\subset G$ we have $\d_{F,\alpha}(a,b)=0$. Once $\alpha$ and $F$ are fixed, for every $\eps>0$ there are $n\geq 0$ and a partition $\beta\in\part_{|\alpha|}(\fx_a)$ refined by $\alpha_n$ such that $\|\coeff(a,F,\alpha)-\coeff(a,F,\beta)\|_1<\eps$. So \begin{align*} \d_{F,\alpha}(a,b)=&\inf_{\gamma\in\part_{|\alpha|}(\fx_b)}\|\coeff(a,F,\alpha)-\coeff(b,F,\gamma)\|_1\\ \leq& \inf_{\gamma\in\part_{|\alpha|}(\fx_b)}\|\coeff(a,F,\beta)-\coeff(b,F,\gamma)\|_1+\eps\\ 
\leq& \d_{F,\beta}(a,b)+\eps\\
\leq& \d_{F,\alpha_n}(a,b)+\eps=\eps.\end{align*}
\end{rmk}

\begin{prop} \label{prop:triang}Given three pmp actions $a,b$ and $c$ of $G$ for every $\alpha\in\part_f(\fx_a)$, we have $\d_{F,\alpha}(a,c)\leq \d_{F,\alpha}(a,b)+\d_{F,|\alpha|}(b,c).$
\end{prop}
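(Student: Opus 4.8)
The plan is to prove this by a direct triangle-inequality-type argument, chaining an approximation in $b$ through an approximation in $c$. Fix a finite $F\subset G$ and a partition $\alpha\in\part_f(\fx_a)$ with $k:=|\alpha|$. Given $\eps>0$, first choose, by definition of $\d_{F,\alpha}(a,b)$ as an infimum, a partition $\beta\in\part_k(\fx_b)$ with $\|\coeff(a,F,\alpha)-\coeff(b,F,\beta)\|_1\leq\d_{F,\alpha}(a,b)+\eps$. Since $\beta\in\part_k(\fx_b)$, the definition of $\d_{F,k}(b,c)=\sup_{\gamma\in\part_k(\fx_b)}\d_{F,\gamma}(b,c)$ gives $\d_{F,\beta}(b,c)\leq\d_{F,k}(b,c)$; hence, again using that $\d_{F,\beta}(b,c)$ is an infimum over $\part_k(\fx_c)$, there is $\gamma\in\part_k(\fx_c)$ with $\|\coeff(b,F,\beta)-\coeff(c,F,\gamma)\|_1\leq\d_{F,k}(b,c)+\eps$.

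Now the key point is that $\coeff(a,F,\alpha)$, $\coeff(b,F,\beta)$ and $\coeff(c,F,\gamma)$ all live in the same finite-dimensional real vector space $\br^{k^2|F|}$ (they are all indexed by triples $(i,j,g)$ with $i,j\leq k$, $g\in F$), precisely because $\alpha$, $\beta$, $\gamma$ have the same number $k$ of atoms. So the $\ell^1$-norm satisfies the ordinary triangle inequality on these vectors, and
\[
\|\coeff(a,F,\alpha)-\coeff(c,F,\gamma)\|_1\leq\|\coeff(a,F,\alpha)-\coeff(b,F,\beta)\|_1+\|\coeff(b,F,\beta)-\coeff(c,F,\gamma)\|_1\leq\d_{F,\alpha}(a,b)+\d_{F,k}(b,c)+2\eps.
\]
Since $\gamma\in\part_k(\fx_c)$, the left-hand side is at least $\d_{F,\alpha}(a,c)=\inf_{\gamma'\in\part_k(\fx_c)}\|\coeff(a,F,\alpha)-\coeff(c,F,\gamma')\|_1$, so $\d_{F,\alpha}(a,c)\leq\d_{F,\alpha}(a,b)+\d_{F,k}(b,c)+2\eps$. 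Letting $\eps\to0$ finishes the proof.

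I do not expect any serious obstacle here; the only thing to be careful about is bookkeeping with the index $k=|\alpha|$, making sure that the intermediate partition $\beta$ and the final partition $\gamma$ are both taken with exactly $k$ atoms so that all the coefficient vectors are comparable and so that $\beta$ is an admissible competitor in the supremum defining $\d_{F,k}(b,c)$. It is worth noting explicitly in the write-up that this is the reason the second term is $\d_{F,|\alpha|}(b,c)$ (a sup over $k$-atom partitions of $\fx_b$) rather than $\d_{F,\beta}(b,c)$: we have no control over which partition $\beta$ the first infimum selects, so we must pass to the worst case over all $k$-atom partitions of $\fx_b$.
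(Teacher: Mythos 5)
Your proof is correct and is essentially the paper's argument: both rest on the $\ell^1$ triangle inequality applied to the coefficient vectors in $\br^{k^2|F|}$, with the intermediate $k$-atom partition $\beta$ of $\fx_b$ bounded by the supremum defining $\d_{F,|\alpha|}(b,c)$ because it cannot be controlled. The only difference is presentational — you use $\eps$-optimal choices of $\beta$ and $\gamma$, while the paper chains the infima and suprema directly.
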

\begin{proof}
Put $k=|\alpha|$. The proof is a straightforward computation: 
\begin{align*}
\d_{F,\alpha}(a,c)=&\inf_{\gamma\in\part_k(\fx_c)} \|\coeff(a,F,\alpha)-\coeff(c,F,\gamma)\|_1\\
\leq & \inf_{\beta\in\part_k(\fx_b)}\inf_{\gamma\in\part_k(\fx_c)} \left(\|\coeff(a,F,\alpha)-\coeff(b,F,\beta)\|_1+\|\coeff(b,F,\beta)-\coeff(c,F,\gamma)\|_1\right)\\
\leq & \inf_{\beta\in\part_k(\fx_b)}\left(\|\coeff(a,F,\alpha)-\coeff(b,F,\beta)\|_1+\inf_{\gamma\in\part_k(\fx_c)}\|\coeff(b,F,\beta)-\coeff(c,F,\gamma)\|_1\right)\\
\leq & \d_{F,\alpha}(a,b)+\sup_{\beta\in\part_k(\fx_b)}\inf_{\gamma\in\part_k(\fx_c)} \|\coeff(b,F,\beta)-\coeff(c,F,\gamma)\|_1\\ 
\leq &\d_{F,\alpha}(a,b)+\d_{F,|\alpha|}(b,c).\qedhere
\end{align*}
\end{proof}

\begin{dfn}\label{dfn:wct}
  The \textbf{WC-topology} on $\act(G)$ is the topology generated by the family of pseudo-metrics $\bar\d_{F,k}(a,b):=\d_{F,k}(a,b)+\d_{F,k}(b,a)$, where $F\subset G$ is any finite subset and $k\in\bn$. 
\end{dfn}

The topology is not $T_1$ and two actions have the same closure if and only if they are weakly equivalent. We denote by $\bact(G)$ the space of weakly-equivalent classes of actions. The WC-topology descends to a metric topology on $\bact(G)$. The definition of the WC-topology is similar to the definition given by Burton in \cite{Burton2015}. In the same paper he proved that the topology is equivalent to the topology of \cite{Abert2011c}. We will give a simpler and different proof in Theorem \ref{thm:compact}. 

The following proposition will be crucial to understand limits for the WC-topology. 

\begin{prop}\label{prop:reductiona}
  Let $\{a,a_1,a_2,\ldots\}$ be a family of actions of $G$. Then for every finite subset $F\subset G$, the following conditions are equivalent
  \begin{enumerate}
  \item for every finite partition $\alpha\in\part_f(\fx_a)$, we have $\lim_n\d_{F,\alpha}(a,a_n)=0$,
  \item for every $k\in\bn$, we have $\lim_{n}\d_{F,k}(a,a_n)=0$. 
  \end{enumerate}
\end{prop}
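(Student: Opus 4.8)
The plan is: the implication (2)$\Rightarrow$(1) is immediate, while (1)$\Rightarrow$(2) becomes a soft compactness argument once everything is transported, via the statistics map, into a fixed finite‑dimensional cube. For the easy direction, if $\alpha\in\part_k(\fx_a)$ then directly from the definitions $\d_{F,\alpha}(a,a_n)\le\sup_{\alpha'\in\part_k(\fx_a)}\d_{F,\alpha'}(a,a_n)=\d_{F,k}(a,a_n)$, so (2) applied with $k=|\alpha|$ yields (1).

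For (1)$\Rightarrow$(2), fix $k\in\bn$, put $N:=k^{2}|F|$, and for any pmp action $b$ of $G$ set $S_k(b):=\{\coeff(b,F,\beta):\beta\in\part_k(\fx_b)\}$, a nonempty subset of the cube $[0,1]^{N}$ (each coordinate of $\coeff(b,F,\beta)$ is the measure of a set). Writing $h_n(v):=\inf_{w\in S_k(a_n)}\|v-w\|_1$ for the $\ell^{1}$‑distance from $v\in[0,1]^N$ to $S_k(a_n)$, the definitions give
\[\d_{F,\alpha}(a,a_n)=h_n\bigl(\coeff(a,F,\alpha)\bigr)\quad(\alpha\in\part_k(\fx_a)),\qquad \d_{F,k}(a,a_n)=\sup_{v\in S_k(a)}h_n(v).\]
Each $h_n$ is $1$‑Lipschitz for $\|\cdot\|_1$ and bounded by $N$ on $[0,1]^N$, and hypothesis (1) says exactly that $h_n(v)\to0$ for every $v\in S_k(a)$.

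The key move is that one need not — and in fact cannot — prove $S_k(a)$ closed; instead the hypothesis propagates to $\overline{S_k(a)}$ for free, because the locus where a uniformly bounded family of $1$‑Lipschitz functions tends to $0$ is closed. Indeed $\psi:=\limsup_n h_n$ is itself $1$‑Lipschitz on $[0,1]^N$, hence continuous, and vanishes on the dense subset $S_k(a)$ of $K:=\overline{S_k(a)}$, so $\psi\equiv0$ on $K$, i.e. $h_n(v)\to0$ for every $v\in K$. Now $K$ is a closed and bounded, hence compact, subset of $\br^{N}$ and the functions $h_n|_K$ are uniformly $1$‑Lipschitz, so pointwise convergence to $0$ upgrades to uniform convergence (if not, pick $\eps>0$, a subsequence $(n_m)$ and $v_m\in K$ with $h_{n_m}(v_m)\ge\eps$, pass to $v_m\to v_\ast\in K$, and get $h_{n_m}(v_\ast)\ge\eps-\|v_\ast-v_m\|_1$, contradicting $h_n(v_\ast)\to0$). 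Hence $\sup_{v\in K}h_n(v)\to0$; since $h_n$ is continuous, $\sup_{v\in S_k(a)}h_n(v)=\sup_{v\in K}h_n(v)$, so $\d_{F,k}(a,a_n)\to0$. As $k$ was arbitrary, (2) follows.

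The only genuinely non‑formal point — the step I expect a reader to scrutinize — is resisting the temptation to argue directly on the (non‑compact) space of $k$‑partitions: $S_k(a)$ is \emph{not} closed in general (e.g. by a Rokhlin‑tower construction already for an ergodic action on a standard space), but closedness of $S_k(a)$ is not needed, since the convergence hypothesis is inherited by its closure automatically. The remaining ingredients — that $\d_{F,\alpha}$ factors through the statistics vector, and compactness of the cube — are routine.
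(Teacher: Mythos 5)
Your proof is correct and rests on exactly the paper's key idea: $\d_{F,\alpha}(a,a_n)$ depends on $\alpha$ only through the statistics vector $\coeff(a,F,\alpha)$, is $1$-Lipschitz in that vector, and these vectors live in the compact cube $[0,1]^{|F|k^2}$. The only difference is packaging: the paper extracts a finite $\eps$-net of partitions from this precompactness (and uses refinement monotonicity to apply hypothesis $(1)$ to a single common refinement) and concludes by the same triangle inequality, whereas you upgrade pointwise convergence of the equi-Lipschitz distance functions $h_n$ to uniform convergence on the closure of the statistics set --- the same compactness argument in slightly different dress.
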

\begin{proof}
  Condition $(2)$ is by definition stronger than condition $(1)$, so let us suppose that $(1)$ holds. Fix $\eps>0$. For $k\in\bn$ set  
\[C:=\left\lbrace\coeff(\beta,F,a):\ \beta\in\part_k(\fx_a)\right\rbrace\subseteq [0,1]^{|F|k^2}.\]
By compactness, there are partitions $\alpha_1,\ldots,\alpha_j\in\part_k(\fx_a)$ such that 
\[\forall x\in C\text{ there is }i\leq j\text{ such that }\|\coeff(\alpha_i,F,a)-x\|_1\leq \eps.\]

Consider the finite partition $\alpha$ generated by $\alpha_1,\ldots,\alpha_j$. By hypothesis there is $N\in\bn$ such that for every $n\geq N$, we have that $\d_{F,\alpha}(a,a_n)<\eps$. Since $\alpha$ refines $\alpha_i$ for every $i$, we also have that $\d_{F,\alpha_i}(a,a_n)<\eps$ for every $i\leq j$ and $n\geq N$. So for $n\geq N$ and for every $\beta\in \part_f(\fx_a)$, there is $i\leq j$ such that $\|\coeff(\beta,F,a)-\coeff(\alpha_i,F,a)\|_1\leq \eps$, therefore \[\d_{F,\beta}(a,a_n)\leq \|\coeff(\beta,F,a)-\coeff(\alpha_i,F,a)\|_1+\d_{F,\alpha_i}(a,a_n)\leq 2\eps.\qedhere\]
\end{proof}

The following proposition is inspired by Theorem 5.3 of \cite{Conley2013a}.

\begin{prop}
  For a sequence of actions $a_n\in \act_d(G)$, the following are equivalent:
  \begin{enumerate}
  \item for every finite subset $F\subset G$ and $\alpha\in\part_f(\fx_a)$, we have $\d_{F,\alpha}(a,a_n)\to 0$,
  \item there is a family of automorphisms $T_n\in\aut(\fx_{a_n})$ such that $T_na_nT_{n}^{-1}$ converges to the action $a$ in the weak topology.
  \end{enumerate}
\end{prop}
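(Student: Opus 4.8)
The plan is to prove the equivalence by showing $(2)\Rightarrow(1)$ is essentially trivial, while $(1)\Rightarrow(2)$ requires a careful diagonal/exhaustion argument over partitions. For $(2)\Rightarrow(1)$: if $T_n a_n T_n^{-1} \to a$ weakly, then for a fixed finite partition $\alpha = \{A_1,\dots,A_k\}$ of $\fx_a$, the partition $\beta_n := T_n^{-1}\alpha := \{T_n^{-1}A_1,\dots,T_n^{-1}A_k\}$ of $\fx_{a_n}$ satisfies $\mu_{a_n}(T_n^{-1}A_i \cap f\, T_n^{-1}A_j) = \mu_{a_n}(T_n^{-1}A_i \cap T_n^{-1}(T_n f T_n^{-1} A_j))$; since $T_n$ is measure preserving this equals $\mu_{a_n}(A_i \cap (T_n f T_n^{-1})A_j)$, which by the weak convergence $T_n a_n(f) T_n^{-1} \to a(f)$ converges to $\mu_a(A_i \cap f A_j)$ for each of the finitely many pairs $(i,j)$ and $f\in F$. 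Hence $\|\coeff(a,F,\alpha)-\coeff(a_n,F,\beta_n)\|_1 \to 0$, so $\d_{F,\alpha}(a,a_n)\to 0$.

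For $(1)\Rightarrow(2)$: fix an increasing sequence of finite partitions $\alpha_1 \subseteq \alpha_2 \subseteq \cdots$ of $\fx_a$ whose generated algebra is dense in $\malg(\fx_a,\mu_a)$, and an exhaustion $F_1 \subseteq F_2 \subseteq \cdots$ of $G$ by finite subsets. By hypothesis and Proposition \ref{prop:reductiona}, for every $m$ we have $\lim_n \d_{F_m, |\alpha_m|}(a,a_n) = 0$; in particular $\lim_n \d_{F_m,\alpha_m}(a,a_n)=0$. So we can pick an increasing sequence $N_1 < N_2 < \cdots$ with $\d_{F_m,\alpha_m}(a,a_n) < 1/m$ for all $n \geq N_m$; for $N_m \le n < N_{m+1}$ choose a partition $\beta_n^{(m)} \in \part_{|\alpha_m|}(\fx_{a_n})$ realizing this approximation up to $1/m$. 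The idea is then to use these near-optimal matchings of statistics to build the automorphisms $T_n$. Because $(\fx_{a_n},\mu_{a_n})$ is a fixed standard diffuse space and $\alpha_m$ is essentially a generating sequence, a partition $\beta_n^{(m)}$ of $\fx_{a_n}$ whose $F_m$-statistics nearly match those of $\alpha_m$ determines a near-isomorphism of the corresponding finite $G$-equivariant "combinatorial data"; one upgrades this to an honest measure-preserving bijection $T_n : \fx_{a_n} \to \fx_a \cong \fx_{a_n}$ by mapping the atoms of $\beta_n^{(m)}$ (refined over $m' \le m$, intersected with translates) onto the corresponding atoms of $\alpha_m$, using that any two atoms of the same measure in a diffuse standard space are isomorphic, and taking a limit over $m$. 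The resulting $T_n$ conjugates $a_n$ to an action whose restriction to the algebra generated by $\bigcup_m \alpha_m$ has $F_m$-statistics within $O(1/m)$ of those of $a$, hence $T_n a_n T_n^{-1} \to a$ in the weak topology since that algebra is dense.

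The main obstacle is the construction of the automorphisms $T_n$ in $(1)\Rightarrow(2)$: matching statistics of a single partition only controls a bounded amount of information, so one must do this compatibly across the increasing sequence $\alpha_m$, i.e. one needs the chosen partitions $\beta_n^{(m)}$ (or refinements thereof) to be nested and to converge, as $n\to\infty$, to a genuine generating partition of $\fx_{a_n}$ reproducing the full measure algebra structure. This is exactly the kind of argument that appears in Conley--Kechris--Tucker-Drob (Theorem 5.3 of \cite{Conley2013a}) — a back-and-forth/compactness argument realizing a weak-containment approximation as a conjugation limit — and the write-up will need to handle the bookkeeping of how the finitely many near-matchings at level $m$ are refined and glued as $m$ grows, together with the standard fact that finite measured partitions with prescribed atom-measures in a diffuse standard space can always be realized and matched by measure isomorphisms.
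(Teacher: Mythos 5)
Your direction $(2)\Rightarrow(1)$ is correct and is exactly the paper's (one-line) argument. The problem is in $(1)\Rightarrow(2)$: after the diagonalization you never actually construct the automorphisms $T_n$, and the route you flag as the ``main obstacle'' --- choosing the partitions $\beta_n^{(m)}$ nested across levels $m$, gluing the matchings and ``taking a limit over $m$'' so that $T_n$ reproduces the full measure-algebra structure of $\fx_{a_n}$ --- is both unresolved and asking for more than can be true. For a \emph{fixed} $n$ the action $a_n$ is only close to $a$ at one finite level of statistics; the partitions $\beta_n^{(m)}$ for different $m$ have no reason to be compatible, and no single $T_n$ can be expected to intertwine the whole measure algebras (the $a_n$ need not be close to $a$ in any global sense), so a back-and-forth construction aiming at a generating partition of $\fx_{a_n}$ matching that of $\fx_a$ is not the right target and is not needed.

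The point you are missing, which is the paper's whole proof, is that \emph{one level per $n$ suffices}, because statistics on a refinement control statistics on every coarsening (Remark \ref{rmk:ref}) and weak convergence is checked one fixed partition at a time as $n\to\infty$. Concretely: take increasing $\alpha_n\in\part_f(\fx_a)$ generating a dense algebra and increasing finite sets $F_n\ni 1_G$ with $\cup_nF_n=G$ and $\d_{F_n,\alpha_n}(a,a_n)\to 0$ (your diagonalization gives this after reindexing); choose $\beta_n\in\part_{|\alpha_n|}(\fx_{a_n})$ with $\|\coeff(a,F_n,\alpha_n)-\coeff(a_n,F_n,\beta_n)\|_1\to 0$. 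Since $1_G\in F_n$, the atom measures of $\beta_n$ differ from those of $\alpha_n$ by $o(1)$, so by diffuseness you may perturb $\beta_n$ to have exactly matching atom measures at an $o(1)$ cost in the coefficients; then there is $T_n\in\aut(\fx,\mu)$ with $T_n\beta_n=\alpha_n$ atomwise, and $\coeff(T_na_nT_n^{-1},F_n,\alpha_n)=\coeff(a_n,F_n,\beta_n)$. Hence the statistics of $T_na_nT_n^{-1}$ on $\alpha_n$ over $F_n$, and therefore on every coarsening of $\alpha_n$ over every $F_m\subset F_n$, are $o(1)$-close to those of $a$; since any fixed finite partition of $\fx_a$ is approximated in measure by partitions in the algebra generated by $\cup_m\alpha_m$, uniformly over all measure-preserving actions on $(\fx,\mu)$, this gives $T_na_nT_n^{-1}\to a$ weakly. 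No nesting of the $\beta_n^{(m)}$, no compatibility between the different $T_n$, and no limit over $m$ inside the construction of a single $T_n$ is required.
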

\begin{proof}
  The fact that $(2)$ implies $(1)$ follows directly from the definitions, so we can suppose that $(1)$ holds. By a diagonal argument, we can find an increasing sequence of finite partitions $(\alpha_n)_n=(\{A_1^n,\ldots,A_{k_n}^n\})_n$ and an increasing sequence of finite subsets $F_n$ of $G$ such that $\d_{F_n,\alpha_n}(a,a_n)$ tends to $0$, $\cup_n F_n=G$ and the algebra generated by $\cup_n \alpha_n$ is dense in $\malg(\fx,\mu)$. By $(1)$, there is a sequence of partitions $(\beta_n)_n=(\{B_1^n,\ldots,B_{k_n}^n\})_n$ such that $\|\coeff(a,F_n,\alpha_n)-\coeff(a_n,F_n,\beta_n)\|_1$ tends to $0$, which we can choose to satisfy $\mu(A^n_i)=\mu(B^n_i)$. For every $n$, there is $T_n\in\aut(\fx,\mu)$ such that $\alpha_n=T_n\beta_n$. Now observe that $\coeff(T_na_nT_n^{-1},F_n,T_n\beta_n)=\coeff(a_n,F_n,\beta_n)$, so $(2)$ holds. 
\end{proof}

The following corollary is well-known (in the standard setting). 

\begin{crl}\label{crl:three}
For every pmp action $b$ on any probability space, the set of $\{a\in\act_d(G):\ a\prec b\}$ is weakly closed. 
\end{crl}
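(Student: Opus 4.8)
The statement to prove is Corollary \ref{crl:three}: for a fixed pmp action $b$ of $G$ on an arbitrary probability space, the set $\mathcal{W}_b := \{a \in \act_d(G) : a \prec b\}$ is closed in the weak topology on $\act_d(G)$. My plan is to realize $\mathcal{W}_b$ as an intersection of weakly closed sets indexed by finite data $(F, k)$, using the functions $\d_{F,k}(\cdot, b)$ defined above. Recall from the remark after the definition of $\d_{F,k}$ that $a \prec b$ if and only if $\d_{F,k}(a,b) = 0$ for every finite $F \subset G$ and every $k \in \bn$; hence
\[
\mathcal{W}_b \;=\; \bigcap_{\substack{F \subset G \text{ finite}\\ k \in \bn}} \{a \in \act_d(G) : \d_{F,k}(a,b) = 0\}.
\]
So it suffices to show that for each fixed pair $(F,k)$ the function $a \mapsto \d_{F,k}(a,b)$ is upper semicontinuous on $\act_d(G)$ for the weak topology; then $\{\d_{F,k}(\cdot,b) = 0\} = \{\d_{F,k}(\cdot,b) \le 0\}$ is closed, and the intersection is closed.

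\textbf{Upper semicontinuity of $\d_{F,k}(\cdot,b)$.} Fix $F$ and $k$, and suppose $a_n \to a$ in the weak topology of $\act_d(G)$; I want $\limsup_n \d_{F,k}(a_n,b) \le \d_{F,k}(a,b)$. Here all actions $a_n$ and $a$ live on the same fixed standard diffuse space $(\fx,\mu)$, and weak convergence means $\mu(a_n(g)A \,\Delta\, a(g)A) \to 0$ for every measurable $A$ and $g \in G$; equivalently $\coeff(a_n, F, \alpha) \to \coeff(a, F, \alpha)$ for every fixed $\alpha \in \part_k(\fx)$ and every finite $F$. The key point is that $\d_{F,k}(a,b) = \sup_{\alpha \in \part_k(\fx)} \d_{F,\alpha}(a,b)$, and for each fixed $\alpha$ the map $a \mapsto \d_{F,\alpha}(a,b) = \inf_{\beta \in \part_k(\fx_b)}\|\coeff(a,F,\alpha) - \coeff(b,F,\beta)\|_1$ is $1$-Lipschitz (in the sup-norm, hence also in the relevant sense) as a function of the vector $\coeff(a,F,\alpha) \in [0,1]^{|F|k^2}$: indeed $|\d_{F,\alpha}(a,b) - \d_{F,\alpha}(a',b)| \le \|\coeff(a,F,\alpha) - \coeff(a',F,\alpha)\|_1$. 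To promote this pointwise-in-$\alpha$ continuity to control of the supremum, I would use a compactness/net argument: pick $\alpha_n \in \part_k(\fx)$ with $\d_{F,\alpha_n}(a_n,b) \ge \d_{F,k}(a_n,b) - 1/n$. One cannot directly pass to a limit of $\alpha_n$ since $\part_k(\fx)$ is not compact in a useful topology, so instead I work at the level of the coefficient vectors: the set $C_n := \{\coeff(\cdot,F,\beta) : \beta \in \part_k(\fx_{a_n})\}$ sits in the compact cube $[0,1]^{|F|k^2}$. The cleaner route is to exploit that the vector $v_n := \coeff(a_n, F, \alpha_n)$ lies in $[0,1]^{|F|k^2}$, pass to a subsequence along which $\d_{F,k}(a_n,b) \to \limsup$, and along which $v_n$ converges to some vector $v$; then by lower semicontinuity of the relevant quantities I must show $v$ is realized (up to arbitrarily small error) by a partition of $\fx$ in the action $a$.

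\textbf{The main obstacle.} The delicate step is precisely this last one: showing that the limiting coefficient vector $v = \lim v_n$ is approximated by $\coeff(a, F, \gamma)$ for some $\gamma \in \part_k(\fx)$, i.e.\ that the "achievable statistics" set is closed under weak limits of the action. I expect the right tool here is a compactness argument on partitions measured via the $L^1$ distance on indicator functions together with the weak convergence $a_n \to a$: since $\fx$ is a fixed standard space, $\part_k(\fx)$ embedded into $L^1(\fx,\mu)^k$ via $\alpha \mapsto (\mathbf{1}_{A_1},\dots,\mathbf{1}_{A_k})$ is weakly compact (bounded, and the closure in the weak $L^1$ topology consists of "fuzzy partitions" — functions valued in the simplex), and the statistics $\coeff$ extend continuously to fuzzy partitions; then one rounds a fuzzy partition back to a genuine one without changing the statistics much, or observes that extreme points suffice. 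Alternatively, and more in the spirit of the paper, I would avoid this entirely by invoking Proposition \ref{prop:reductiona}: fix $F$, and note that $\d_{F,k}(a, a_n) \to 0$ for all $k$ is equivalent to $\d_{F,\alpha}(a,a_n) \to 0$ for all $\alpha \in \part_f(\fx_a)$; combined with weak convergence $a_n \to a$ (which gives $\d_{F,\alpha}(a, a_n) \to 0$ for each fixed $\alpha$, since one can take $\beta = \alpha$ and $\coeff(a_n,F,\alpha) \to \coeff(a,F,\alpha)$), Proposition \ref{prop:reductiona} upgrades this to $\d_{F,k}(a,a_n) \to 0$ for all $k$, and symmetrically one argues with the triangle inequality of Proposition \ref{prop:triang}: $\d_{F,k}(a, b) \le \d_{F,k}(a, a_n) + \d_{F,k}(a_n, b)$. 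Since $a_n \in \mathcal{W}_b$ gives $\d_{F,k}(a_n,b) = 0$, and $\d_{F,k}(a,a_n) \to 0$, we conclude $\d_{F,k}(a,b) = 0$ for every $F$ and $k$, hence $a \prec b$. This second approach is cleaner and uses only the results already proved, so that is the one I would write up; the one genuine input it needs is that weak convergence in $\act_d(G)$ implies $\d_{F,\alpha}(a,a_n) \to 0$ for every fixed finite partition $\alpha$ of $\fx_a$, which is immediate from the definition of the weak topology since $\alpha$ is itself an admissible choice of $\beta$ in the infimum defining $\d_{F,\alpha}$.
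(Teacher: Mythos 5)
Your final write-up route --- weak convergence gives $\d_{F,\alpha}(a,a_n)\to 0$ for each fixed $\alpha\in\part_f(\fx_a)$ and finite $F$, and then the triangle inequality of Proposition \ref{prop:triang} together with $\d_{F,|\alpha|}(a_n,b)=0$ (from $a_n\prec b$) forces $\d_{F,\alpha}(a,b)=0$, hence $a\prec b$ --- is exactly the paper's proof; the extra pass through Proposition \ref{prop:reductiona} to upgrade to $\d_{F,k}$ is correct but unnecessary, since vanishing of $\d_{F,\alpha}(a,b)$ for all $\alpha$ and $F$ already gives weak containment. Your abandoned first attempt (upper semicontinuity of $\d_{F,k}(\cdot,b)$ via limits of coefficient vectors) does contain the gap you yourself flagged, but since you explicitly discard it in favor of the triangle-inequality argument, the proposal as intended to be written up is correct and essentially the same as the paper's.
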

\begin{proof}
  We use Proposition \ref{prop:triang}. Let $(a_n)_n$ be a sequence which converges weakly to $a$ such that $a_n\prec b$ for every $n$. By the (easy part of the) previous proposition, for every $\alpha\in\part_f(\fx_a)$ and $F\subset G$ finite, we have that $\d_{F,\alpha}(a,a_n)\to 0$. Hence \[\d_{F,\alpha}(a,b)\leq \d_{F,\alpha}(a,a_n)+d_{F,k}(a_n,b)=\d_{F,\alpha}(a,a_n)\to 0.\qedhere\]
\end{proof}

\begin{dfn}\label{dfn:fix}
  For every pmp action $a$ of $G$ and for every $g\in G$, we set
  \begin{align*}
    \fix_g(a):=&\{x\in \fx_a:\ gx=x\},\\
    |\fix_g(a)|:=&\mu_a(\fix_g(a)).
  \end{align*}
\end{dfn}

\begin{prop}
  For every $g\in G$, the map $|\fix_g(\cdot)|:\bact(G)\to [0,1]$ is well-defined and (WC-)continuous. 
\end{prop}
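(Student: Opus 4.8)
The plan is to show two things: first, that $|\fix_g(\cdot)|$ is genuinely a function on weak equivalence classes (i.e.\ invariant under $\sim$), and second, that it is continuous for the pseudo-metrics $\bar\d_{F,k}$. The common engine for both is a formula expressing $|\fix_g(a)|$ in terms of the statistics $\coeff(a,\{1,g\},\alpha)$ as $\alpha$ ranges over finite partitions, so that both well-definedness and continuity reduce to a quantitative estimate.

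The key observation is that for a partition $\alpha=\{A_1,\dots,A_k\}$, the quantity $\sum_{i\le k}\mu_a(A_i\cap gA_i)$ over-counts $|\fix_g(a)|$ and converges down to it as $\alpha$ gets finer. More precisely, first I would record the elementary inequality $\mu_a(\fix_g(a))\le \sum_i\mu_a(A_i\cap gA_i)$, which holds because $\fix_g(a)\cap A_i\subseteq A_i\cap gA_i$; and conversely, if $\alpha$ refines a partition that separates points of $\fx_a$ well enough (which one can always arrange for a standard space, and it suffices to check on standard factors by Theorem~\ref{thma}), then $\sum_i\mu_a(A_i\cap gA_i)$ is within $\eps$ of $|\fix_g(a)|$. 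So
\[
|\fix_g(a)| \;=\; \inf_{\alpha\in\part_f(\fx_a)}\ \sum_{i\le|\alpha|}\mu_a(A_i\cap gA_i).
\]
Actually it is cleaner to use $\inf_k \inf_{\alpha\in\part_k}$ and note the infimum over $\part_k$ is $\le \eps$-close once $k$ is large. Note that $\sum_i \mu_a(A_i\cap gA_i)$ is exactly a coordinate-sum of the vector $\coeff(a,\{g\},\alpha)$, hence controlled by $\|\coeff(a,\{1,g\},\alpha)-\coeff(b,\{1,g\},\beta)\|_1$.

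For well-definedness: if $a\sim b$, then given $\eps$, pick $\alpha$ on $\fx_a$ with $\sum_i\mu_a(A_i\cap gA_i)<|\fix_g(a)|+\eps$; since $a\prec b$ there is $\beta$ on $\fx_b$ with $\|\coeff(a,\{1,g\},\alpha)-\coeff(b,\{1,g\},\beta)\|_1<\eps$, whence $|\fix_g(b)|\le\sum_i\mu_b(B_i\cap gB_i)\le \sum_i\mu_a(A_i\cap gA_i)+\eps<|\fix_g(a)|+2\eps$. The reverse uses $b\prec a$. For continuity on $\bact(G)$: the same estimate shows that if $\bar\d_{\{1,g\},k}(a,b)$ is small for all large $k$, then $||\fix_g(a)|-|\fix_g(b)||$ is small; more carefully, fix $\eps$, choose $k$ and $\alpha\in\part_k(\fx_a)$ realizing the infimum up to $\eps$, and then $\d_{\{1,g\},k}(a,b)<\eps$ gives a matching $\beta$ on $\fx_b$, so $|\fix_g(b)|\le |\fix_g(a)|+2\eps$; symmetrically with $\d_{\{1,g\},k}(b,a)$.

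The main obstacle is the lower bound $|\fix_g(a)|\ge \inf_\alpha\sum_i\mu_a(A_i\cap gA_i)$ when $\fx_a$ is not standard — i.e.\ producing partitions on which $\sum_i\mu_a(A_i\cap gA_i)$ actually approaches $|\fix_g(a)|$. The clean way around it is to invoke Theorem~\ref{thma} (Theorem~\ref{thm:thma}): $a$ has a standard diffuse factor $a'$ weakly equivalent to $a$, and on $a'$ — where the measure algebra is countably generated — one can choose a finite partition $\alpha$ whose atoms have $\mu(A_i\cap gA_i)$ as close as desired to $\mu(A_i\cap \fix_g(a'))$, because a sufficiently fine partition separates $\fix_g$ from its complement on all but an $\eps$-set. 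Since $|\fix_g(\cdot)|$ depends only on the statistics $\coeff(\cdot,\{1,g\},\cdot)$, which are preserved under the isometric embedding of measure algebras defining a factor, $|\fix_g(a)|=|\fix_g(a')|$, and the formula holds for $a'$, completing the argument. (Alternatively one checks directly that on any probability space, a finite partition into sets of measure $\le\eps$ that refines $\{\fix_g(a),\fx_a\setminus\fix_g(a)\}$ does the job; this avoids Theorem~\ref{thma} entirely, since such a partition always exists on a diffuse space and trivially on a finite one.)
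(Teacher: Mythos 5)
Your reduction of both well-definedness and continuity to the quantity $\sum_i\mu_a(A_i\cap gA_i)$ and its transfer through $\|\coeff(a,\{1_G,g\},\alpha)-\coeff(b,\{1_G,g\},\beta)\|_1$ is sound, and it is the same mechanism the paper uses. The gap is in the one step that carries all the content: producing a partition for which the non-fixed part contributes little. Your claim that ``a sufficiently fine partition separates $\fix_g$ from its complement on all but an $\eps$-set,'' and in particular the parenthetical assertion that \emph{any} finite partition into sets of measure $\le\eps$ refining $\{\fix_g(a),\fx_a\setminus\fix_g(a)\}$ does the job, is false. Take $g$ acting on the circle by rotation by a small irrational angle $\theta$: then $\fix_g(a)=\emptyset$, but a partition into $k$ arcs of length $1/k$ (with $\theta<1/k$) gives $\sum_i\mu(A_i\cap gA_i)=1-k\theta$, which is close to $1$, no matter how small the atoms are. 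Smallness of atoms (or separating points in the Borel sense) says nothing about whether $g$ moves each atom off itself; for that you must build the partition out of the dynamics of $g$. This is exactly where the paper invokes the Rokhlin lemma: on the complement of $\fix_g(a)$ one constructs a tower $A_\eps,gA_\eps,\ldots,g^NA_\eps$ with an error set $B_\eps$ of measure $\le\eps$, so that the only atom of $\alpha:=\{\fix_g(a),A_\eps,\ldots,g^NA_\eps,B_\eps\}$ not moved disjointly from itself (besides $\fix_g(a)$) is $B_\eps$, giving $\sum_i\mu(A_i\cap gA_i)\le|\fix_g(a)|+\eps$. With that partition in hand, your transfer estimates go through verbatim and yield $|\fix_g(b)|\le|\fix_g(a)|+\eta+\eps$ whenever $\d_{\{1_G,g,\ldots,g^N\},\alpha}(a,b)\le\eta$, which is the paper's proof. (For the infimum formula in full generality one also has to treat points of finite period $\ge2$, e.g.\ via cross-sections, so even the exact identity $|\fix_g(a)|=\inf_\alpha\sum_i\mu_a(A_i\cap gA_i)$ is not free.)

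A secondary point: your detour through Theorem~\ref{thm:thma} is circular as written. You assert $|\fix_g(a)|=|\fix_g(a')|$ for the weakly equivalent standard factor $a'$ ``since $|\fix_g(\cdot)|$ depends only on the statistics,'' but that dependence is precisely the well-definedness you are trying to prove; note that for a general factor the fixed-point measure can jump (the trivial factor has $|\fix_g|=1$). Since representatives in $\act(G)$ are already standard or finite, the cleaner fix is to drop the reduction and run the Rokhlin-tower construction directly, as the paper does.
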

\begin{proof}
  Let $a,b\in \act(G)$. By Rokhlin lemma, for every $\eps>0$, there are $A_\eps,B_\eps\subset \fx_a$ and $N\geq 1$, such that $\alpha:=\{\fix_g(a),A_\eps,gA_\eps,\ldots,g^NA_\eps,B_\eps\}$ is a partition of $\fx_a$ and $\mu(B_\eps)\leq\eps$. Put $F:=\{1_G,g,\ldots,g^N\}$ and observe that if $\d_{F,\alpha}(a,b)\leq \eta$, then \[|\fix_g(b)|\leq |\fix_g(a)|+\eta+\eps.\qedhere\]
\end{proof} 

\subsection{Every action is weakly equivalent to a standard one}

%We now prove Theorem A.  

\begin{thm}\label{thm:thma}
  Every pmp action $a$ of the countable group $G$ on a diffuse space has a standard factor which is weakly equivalent to $a$. In particular every pmp action of $G$ is weakly equivalent to an action on a standard Borel probability space. 
\end{thm}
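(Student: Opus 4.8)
The plan is to construct, inside the measure algebra $\malg(\fx_a,\mu_a)$, a countably generated $G$-invariant $\sigma$-subalgebra whose associated factor action $b$ is standard, diffuse, and weakly equivalent to $a$. The natural strategy is to build an increasing sequence of finite partitions $\gamma_1\subset\gamma_2\subset\cdots$ of $\fx_a$ which is both (i) ``statistically dense'' — meaning that for every finite $F\subset G$, every $k$, and every $\alpha\in\part_k(\fx_a)$, the statistics $\coeff(a,F,\alpha)$ can be approximated in $\|\cdot\|_1$ by $\coeff(a,F,\beta)$ for some $\beta$ coarser than some $\gamma_n$ — and (ii) closed, in the appropriate approximate sense, under the action of $G$, so that the $\sigma$-algebra $\fii$ generated by $\bigcup_n g\gamma_n$, $g\in G$, is still countably generated and $G$-invariant. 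Then $b$ is the restriction of $a$ to $\fii$; it is an action on a standard Borel space because $\fii$ is countably generated, and it is a factor of $a$ by definition.

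The key steps, in order: First I would enumerate a countable dense family of ``targets'': since each $\part_k(\fx_a)$ with the natural metric coming from $\coeff(a,F,\cdot)$ is totally bounded (the relevant statistics live in the compact cube $[0,1]^{|F|k^2}$, exactly as in the proof of Proposition~\ref{prop:reductiona}), for each triple $(F,k,m)$ with $F\subset G$ finite, $k\in\bn$, $m\in\bn$ one can pick finitely many partitions $\alpha^{(F,k,m)}_1,\dots,\alpha^{(F,k,m)}_{j}$ whose statistics are $1/m$-dense in $\{\coeff(a,F,\alpha):\alpha\in\part_k(\fx_a)\}$. Second, I would take $\gamma_1$ to be the common refinement of countably many such $\alpha$'s (organized by a diagonal enumeration so that only finitely many enter at each stage), then enlarge: having built $\gamma_n$, let $\gamma_{n+1}$ refine $\gamma_n$ together with $g\gamma_n$ for $g$ ranging over the $n$-th finite subset of $G$ and together with the $n$-th batch of target partitions. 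This yields an increasing sequence of finite partitions, hence $\fii:=\bigvee_n\sigma(\gamma_n)$ is countably generated and, by construction, exactly $G$-invariant (every $g\gamma_n$ is measurable with respect to some $\gamma_{n'}$). Third, I would check diffuseness: either arrange that $\fx_a$ itself is diffuse forces the standard factor to be diffuse, or, if atoms could appear, refine once more by a countable family of partitions whose existence on a diffuse space follows from Rokhlin-type lemmas — in fact the hypothesis is that $\fx_a$ is diffuse, so one just has to ensure $\fii$ separates points finely enough, which can be built into the enumeration. Fourth, I would verify $a\sim b$: the inclusion $b\prec a$ is trivial since $b$ is a factor (any partition of $\fx_b$ is literally a partition of $\fx_a$), and $a\prec b$ follows from Remark~\ref{rmk:genpart} applied to the generating sequence $(\gamma_n)_n$ together with the statistical-density property — given $\alpha\in\part_k(\fx_a)$ and $F$, approximate $\coeff(a,F,\alpha)$ by $\coeff(a,F,\alpha^{(F,k,m)}_i)$, and since $\alpha^{(F,k,m)}_i$ is coarser (up to a further refinement already absorbed into some $\gamma_n$) than $\gamma_n$, its statistics are realized by a partition living in $\fx_b$, so $\d_{F,\alpha}(a,b)\leq 2/m$ for all $m$.

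The main obstacle I anticipate is the bookkeeping that simultaneously achieves statistical density and exact $G$-invariance while keeping everything countably generated: one must interleave, in a single diagonal enumeration over $\bn$, (a) the countably many targets $\alpha^{(F,k,m)}_i$ across all triples $(F,k,m)$, and (b) the ``translate closure'' demands $g\gamma_n$ for all $g\in G$ and all $n$ — and check that at each finite stage only finitely many finite partitions are being amalgamated, so each $\gamma_n$ is genuinely finite. A subtle point is that statistical density must be robust under the later refinements forced by invariance: since refining $\alpha$ only increases $\d_{F,\alpha}(a,\cdot)$ (Remark~\ref{rmk:ref}), one must approximate the target $\alpha$ by a partition that is \emph{coarser} than some $\gamma_n$ rather than finer, which is why the density is phrased in terms of coarsenings and why one includes the $\alpha^{(F,k,m)}_i$ themselves (not refinements of them) among the generators — this is exactly the mechanism used in Remark~\ref{rmk:genpart}. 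Modulo this combinatorial care, the argument is routine; the only genuinely ``analytic'' input is the total boundedness of $\part_k$ in the statistics pseudmetric, which is just compactness of the cube.
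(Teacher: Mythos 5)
Your argument is correct, but it takes a genuinely different route from the paper. You build the standard factor directly: by total boundedness of the statistics sets $\{\coeff(a,F,\alpha):\alpha\in\part_k(\fx_a)\}\subset[0,1]^{|F|k^2}$ (the same compactness used in Proposition~\ref{prop:reductiona}), you select, for each $(F,k,m)$, finitely many partitions of $\fx_a$ whose statistics are $1/m$-dense, adjoin them all as generators of a countably generated $G$-invariant $\sigma$-subalgebra $\fii$, and conclude $a\prec b$ because each selected partition lies in $\fii$ and has the same statistics computed in $b$ as in $a$ (while $b\prec a$ is automatic for a factor). The paper instead argues through the invariant $\{c\in\act_d(G):c\prec a\}$: it uses separability of the weak topology on $\act_d(G)$ to pick a countable weakly dense subset of this (weakly closed, by Corollary~\ref{crl:three}) set, builds a separable factor weakly containing every member of that dense subset, and then upgrades via Lemma~\ref{lem:crit}. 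Your version is more self-contained and elementary — it bypasses Corollary~\ref{crl:three}, Lemma~\ref{lem:crit} and the separability of $\act_d(G)$ entirely, at the cost of some bookkeeping which, as you note, can be simplified: there is no need to force exact $G$-invariance at the level of the finite partitions $\gamma_n$, since one may simply take the $G$-invariant $\sigma$-algebra generated by the countable family, which is still countably generated because $G$ is countable. Two small points: the diffuseness of the factor does not need any Rokhlin-type lemma — it suffices to adjoin an increasing sequence of finite partitions with atoms of measure at most $1/n$, which exist on a diffuse space exactly as in Lemma~\ref{lem:difffact} (and the statement as written only asks for a standard factor anyway); and your worry about coarsenings is resolved precisely as you say, by putting the target partitions themselves into the generating family, so no appeal to Remark~\ref{rmk:ref} is actually needed.
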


We remark that the theorem was also essentially proved for ultraproduct actions in the proof of the main theorem of \cite{Abert2011c}. We start showing that any pmp actions has at least a diffuse standard factor. 

\begin{lem}\label{lem:difffact}
  Every pmp action $a$ of $G$ on a diffuse space has a standard diffuse factor. 
\end{lem}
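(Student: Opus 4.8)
The plan is to build a countable $G$-invariant collection of measurable sets inside $\malg(\fx_a,\mu_a)$ that generates a diffuse sub-$\sigma$-algebra, and then take the factor it defines. First I would observe that since $(\fx_a,\mu_a)$ is diffuse, its measure algebra $\fa=\malg(\fx_a,\mu_a)$ contains, for every $n$, a partition into $2^n$ sets of equal measure $2^{-n}$; more efficiently, by the standard bisection argument (splitting a set of positive measure into two halves, which is possible precisely because the space is diffuse), I can find a single measurable set $A\subset\fx_a$ with $\mu_a(A)=1/2$ and, iterating inside $A$ and its complement, a countable Boolean subalgebra $\fs_0\subset\fa$ whose generated $\sigma$-algebra is isomorphic to the measure algebra of $(\{0,1\}^{\bn},\nu^{\bn})$, i.e. to the measure algebra of a standard diffuse probability space. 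In particular $\fs_0$ is atomless.

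The next step is to make this countable family $G$-invariant. Let $\fs$ be the smallest $G$-invariant Boolean subalgebra of $\fa$ containing $\fs_0$: concretely, take all finite Boolean combinations of the sets $\{gB : g\in G,\ B\in\fs_0\}$. Since $G$ is countable and $\fs_0$ is countable, $\fs$ is countable. Let $\fb$ be the $\sigma$-subalgebra of $\fa$ generated by $\fs$; it is $G$-invariant (the $G$-action is by measure-preserving automorphisms, hence continuous for the measure-algebra metric, so the $\sigma$-algebra generated by a $G$-invariant set is $G$-invariant), and as a measure algebra it is separable because it is $\sigma$-generated by the countable set $\fs$. Restricting the $G$-action to $\fb$ gives a pmp action, call it $b$, with a $G$-invariant isometric embedding $\malg(\fx_b,\mu_b)=\fb\hookrightarrow\fa$, i.e. $b\sqsubseteq a$; and since a separable probability measure algebra is the measure algebra of a standard Borel probability space (by Carathéodory/Maharam classification, cf. the results quoted around Theorem \ref{thm:maharam}), $b$ is a standard factor. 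Finally $b$ is diffuse: $\fb\supseteq\fs\supseteq\fs_0$ and $\fs_0$ is atomless, so $\fb$ has no atoms — any atom of $\fb$ would have to be contained in an atom of the $\sigma$-algebra generated by $\fs_0$, of which there are none. This proves the lemma.

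The only point requiring a little care — and the mild obstacle — is the passage from "countable generating Boolean algebra" to "standard Borel factor": one must invoke the fact that a probability measure algebra with countable $\sigma$-generating set is isomorphic to the measure algebra of a standard probability space, and that the action on such a measure algebra is realized by an action on an honest standard Borel space. Both are classical (the first is Maharam-type $\le\aleph_0$ forcing the algebra to be $\malg(\{0,1\}^{\bn},\nu^{\bn})$ or a finite/countable atomic algebra; the second is Fremlin \cite[343B]{Fremlin3}, already cited), so in the write-up I would simply cite them rather than reprove. The construction of $\fs_0$ via repeated bisection is the one genuinely space-specific input and uses diffuseness in an essential way; everything else is bookkeeping about countability and $G$-invariance.
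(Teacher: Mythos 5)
Your proof is correct and is essentially the paper's own argument: the paper likewise uses diffuseness to produce finite partitions with arbitrarily small atoms (rather than your dyadic bisection), takes the $G$-invariant $\sigma$-algebra generated by their $G$-translates, and concludes that this separable atomless measure algebra gives a standard diffuse factor. The only cosmetic difference is your bisection construction and the explicit atomlessness check; both are fine.
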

\begin{proof}
  If $(\fx_a,\mu_a)$ does not have any atom, we can find an increasing sequence of finite partitions $(\alpha_n)_n\subset \part_f(\fx_a)$ such that the measure of each atom in $\alpha_n$ is less than $1/n$ for every $n$. Then observe that the $G$-invariant $\sigma$-algebra generated by $\cup_n G\alpha_n$ is a separable measure algebra without atoms, so the factor associated is a factor of $a$ on a diffuse, standard probability space. 
\end{proof}

The theorem follows from two facts: the weak topology on $\act_d(G)$ is separable and the following easy lemma. 

\begin{lem}\label{lem:crit}
  For two pmp actions $a$ and $b$, the following are equivalent. 
  \begin{enumerate}
  \item The action $a$ is weakly contained in $b$, $a\prec b$. 
  \item We have $\{c\in \act(G):\ c\prec a\}\subseteq \{c\in \act(G):\ c\prec b\}$. 
  \end{enumerate}

Moreover if $(\fx_a,\mu_a)$ does not have any atom, then we can take $c$ in $(2)$ to be in $\act_d(G)$. 
\end{lem}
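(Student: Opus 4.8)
The plan is to prove the two directions of the equivalence separately, with the implication $(1)\Rightarrow(2)$ being essentially formal and $(2)\Rightarrow(1)$ requiring the construction of a witnessing action $c$ inside $\act(G)$ (or $\act_d(G)$). For $(1)\Rightarrow(2)$, suppose $a\prec b$ and $c\prec a$; I would show $c\prec b$ directly from the definition of weak containment, using a two-step approximation. Given $\eps>0$, a finite $F\subset G$ and a finite partition $\gamma\in\part_f(\fx_c)$, first use $c\prec a$ to find $\alpha\in\part_{|\gamma|}(\fx_a)$ with $\|\coeff(c,F,\gamma)-\coeff(a,F,\alpha)\|_1<\eps/2$, then use $a\prec b$ to find $\beta\in\part_{|\gamma|}(\fx_b)$ with $\|\coeff(a,F,\alpha)-\coeff(b,F,\beta)\|_1<\eps/2$, and conclude by the triangle inequality for $\|\cdot\|_1$. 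This is exactly the transitivity of $\prec$, which is implicit in Proposition \ref{prop:triang}.

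For $(2)\Rightarrow(1)$, the idea is to take $c=a$ itself — or, when $(\fx_a,\mu_a)$ is diffuse, a standard diffuse factor of $a$ that is weakly equivalent to $a$, whose existence is essentially what the surrounding Theorem \ref{thm:thma} is about, but at this point I only need Lemma \ref{lem:difffact} together with the observation that one can keep enlarging the factor. Concretely, I would argue as follows: trivially $a\prec a$, so $a$ belongs to the left-hand set in $(2)$; but $a$ need not lie in $\act(G)$, so one cannot immediately invoke the hypothesis. The fix is to note that $a\prec b$ holds if and only if every \emph{finitely generated} piece of $a$ is weakly contained in $b$ — more precisely, for each finite partition $\alpha\in\part_f(\fx_a)$ and finite $F\subset G$, the restriction of $a$ to the $G$-invariant separable subalgebra generated by $\{gA : g\in G, A\in\alpha\}$ is a factor $c$ of $a$ on a standard probability space, hence $c\in\act(G)$ (after discarding atoms, or keeping them, it lands in $\act_d(G)$ or $\act_f(G)$). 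Since $c\prec a$, hypothesis $(2)$ gives $c\prec b$, and in particular $\d_{F,\alpha}(c,b)=0$. As $\alpha$ is a partition of $\fx_c$ with $\coeff(c,F,\alpha)=\coeff(a,F,\alpha)$, this says $\d_{F,\alpha}(a,b)=0$; ranging over all $\alpha$ and $F$ yields $a\prec b$.

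For the ``moreover'' clause: if $(\fx_a,\mu_a)$ has no atom, then the separable $G$-invariant subalgebra generated by $G\alpha$ can be further enlarged, using Lemma \ref{lem:difffact}, to contain a diffuse standard factor; adjoining such a factor keeps the algebra separable and still a factor of $a$, so the witnessing $c$ may be taken in $\act_d(G)$ while still satisfying $\coeff(c,F,\alpha)=\coeff(a,F,\alpha)$ for the original $\alpha$.

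The main obstacle is the bookkeeping in $(2)\Rightarrow(1)$: one has to be careful that the separable factor $c$ really realizes the statistics $\coeff(a,F,\alpha)$ exactly (which is automatic once $\alpha$ and $G$-translates of $\alpha$ lie in the subalgebra defining $c$) and that $c$, as an abstract standard pmp action, can be identified with a point of $\act(G)$ — this is just the standard fact that every pmp action on a standard Borel probability space is, up to isomorphism of the measure algebra, an action on the fixed model space $[0,1]$ or on some $\{1,\dots,n\}$. None of this is deep, but it is the only place where ``standardness'' genuinely enters, and it is precisely why the hypothesis $(2)$ is stated with $c$ ranging over $\act(G)$ rather than over all pmp actions.
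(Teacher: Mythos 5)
Your proof is correct and follows essentially the same route as the paper: transitivity of $\prec$ for $(1)\Rightarrow(2)$, and for $(2)\Rightarrow(1)$ the separable $G$-invariant factor generated by the $G$-translates of a given partition $\alpha$, applied to hypothesis $(2)$ to conclude $\d_{F,\alpha}(a,b)=0$, with the ``moreover'' clause handled by adjoining the diffuse standard factor of Lemma \ref{lem:difffact}. Your only cosmetic deviation is concluding directly from $\coeff(c,F,\alpha)=\coeff(a,F,\alpha)$ rather than invoking the triangle inequality $\d_{F,\alpha}(a,b)\leq\d_{F,\alpha}(a,c)+\d_{F,|\alpha|}(c,b)$ of Proposition \ref{prop:triang}, which amounts to the same thing.
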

\begin{proof}
  The fact that $(1)$ implies $(2)$ follows from the transitivity of the weak containment. For the converse take a finite partition $\alpha\in\part_f(\fx_a)$ and a finite subset $F\subset G$. The $\sigma$-closure of the $G$-invariant algebra generated by $\alpha$ is a factor of $a$ which we denote by $c\in\act(G)$. By construction $ \d_{F,\alpha}(a,c)=0$ and by $(2)$, we have $c\prec b$. So $\d_{F,\alpha}(a,b)\leq \d_{F,\alpha}(a,c)+\d_{F,|\alpha|}(c,b)=0$. For the moreover part, we can consider the factor $c'$ associated to the $\sigma$-closure of the $G$-invariant algebra generated by $\alpha$ and the standard factor constructed in Lemma \ref{lem:difffact}.
\end{proof}

\begin{proof}[Proof of Theorem \ref{thm:thma}]
  By Corollary \ref{crl:three}, the set $A:=\{c\in\act_d(G):c\prec a\}$ is weakly closed. Let $\{b_n\}_{n\in\bn}$ be a countable weakly-dense subset of $A$. For every $n$, let $\{\beta_n^k\}_{k\in\bn}$ be an increasing sequence of finite partitions of $\fx_{b_n}$ which generate the $\sigma$-algebra. Let $\{F_n\}_n$ be an increasing sequence of finite subsets of $G$. For every $n,m,k\in\bn$, let $\alpha_{n}^{k,m}$ be a partition of $\fx_a$ such that \[\|\coeff(b_n,F_m,\beta_n^k)-\coeff(a,F_m,\alpha_n^{k,m})\|_1\leq \frac{1}{m}.\]

Consider the $G$-invariant $\sigma$-algebra $\fa$ generated by the partitions $\{\alpha_n^{k,m}\}_{n,k,m}$. Then $\fa$ is separable, since it is generated by finite partitions and $G$ is countable, so the associated factor $b$ is a factor of $a$ on a standard diffuse probability space which by construction weakly contains $b_n$ for every $n$. Corollary \ref{crl:three} implies that 
\[\{c\in\act_d(G):c\prec a\}=\overline{\{b_n\}_n}\subseteq \{c\in \act_d(G):\ c<b\}\]
therefore $(2)$ of Lemma \ref{lem:crit} holds, hence $a\prec b$.
\end{proof}

From now on, we will identify $\bact(G)$ with the set of weak equivalence classes of actions of $G$ on any diffuse of finite uniform probability space. 

\subsection{Ultraproduct and weak equivalence}

Given a pmp action $a$ of $G$, a partition $\alpha\in\part_f(\fx_a)$ and a finite subset $F\subset G$ we denote by $\alpha_F$ the partition generated by the $F$-translates of $\alpha$.  

\begin{dfn}\label{dfn:quasihomo}
  Consider two pmp actions $a$ and $b$ of $G$ and let us fix a partition $\alpha\in\part_f(\fx_a)$, a finite subset $F\subset G$ and $\delta>0$. A $(\alpha,\delta,F)$-\textbf{homomorphism} $\ph$ from $a$ to $b$, is a homomorphism from the measure algebra of $\alpha_F$, to the measure algebra $\malg(\fx_b,\mu_b)$, which satisfies
  \begin{itemize}
  \item $\mu_b(f\ph(A)\Delta\ph(f(A)))<\delta$ for every $A\in\alpha$ and $f\in F$,
  \item $\sum_{A\in\alpha_F} |\mu_b(\ph(A))-\mu_a(A)|<\delta$. 
  \end{itemize}
\end{dfn}

We denote by $\hom(a,\alpha,F,\delta,b)$ the set of $(\alpha,\delta,F)$-homomorphisms from $a$ to $b$

\begin{prop}\label{prop:weakquasi}
  An action $a$ is weakly contained in $b$ if and only if for every $\alpha\in\part_f(\fx_a)$, for every finite subset $F\subset G$ and for every $\delta>0$, the set $\hom(a,\alpha,F,\delta,b)$ is not empty. 
\end{prop}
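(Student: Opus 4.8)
The plan is to unwind the definitions on both sides and observe that a $(\alpha,\delta,F)$-homomorphism is essentially a repackaging of the approximation data $\|\coeff(a,F,\alpha)-\coeff(b,F,\beta)\|_1\leq\eps$, modulo a translation between ``partitions of $\fx_b$'' and ``homomorphisms of measure algebras out of $\malg(\alpha_F)$''. I will prove the two implications separately.

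For the forward direction, assume $a\prec b$, and fix $\alpha\in\part_f(\fx_a)$, a finite $F\subset G$, and $\delta>0$. Enlarge $F$ to $F'=F\cup F^{-1}\cup\{1_G\}$ and consider the partition $\alpha_{F'}$ together with the finite set $F''=F'F'$ (so that for $A\in\alpha_{F'}$ and $f\in F'$, the set $fA$ is again a union of atoms indexed by the $F''$-translate structure). Apply the definition of $a\prec b$ with the partition $\alpha_{F'}$, the finite set $F''$, and a sufficiently small $\eps=\eps(\delta,|\alpha_{F'}|,|F''|)$ to obtain $\beta\in\part_{|\alpha_{F'}|}(\fx_b)$ with $\|\coeff(a,F'',\alpha_{F'})-\coeff(b,F'',\beta)\|_1\leq\eps$. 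Define $\ph:\malg(\alpha_F)\to\malg(\fx_b,\mu_b)$ by sending each atom of $\alpha_F$ (which is a union of atoms of $\alpha_{F'}$) to the corresponding union of atoms of $\beta$, extended additively; this is a Boolean algebra homomorphism. The measure condition $\sum_{A\in\alpha_F}|\mu_b(\ph(A))-\mu_a(A)|<\delta$ follows from the diagonal ($g=1_G$) part of the $\coeff$ estimate. For the equivariance condition, write $fA$ (for $A\in\alpha$, $f\in F$) as a union of atoms of $\alpha_{F'}$; then $\mu_b(f\ph(A)\Delta\ph(fA))$ is controlled by the entries $|\mu_a(A_i\cap fA_j)-\mu_b(B_i\cap fB_j)|$ appearing in $\|\coeff(a,F'',\alpha_{F'})-\coeff(b,F'',\beta)\|_1$, since $f\ph(A)$ and $\ph(fA)$ have the same ``intended'' measure and differ only by the error in matching the $F'$-translate combinatorics. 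Choosing $\eps$ small enough relative to $\delta$ and the (finite) combinatorial data makes both bullets hold, so $\ph\in\hom(a,\alpha,F,\delta,b)$.

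For the converse, assume every $\hom(a,\alpha,F,\delta,b)$ is nonempty, and fix $\alpha=\{A_1,\dots,A_k\}\in\part_f(\fx_a)$, a finite $F\subset G$, and $\eps>0$. Pick $\delta$ small (depending on $k$, $|F|$, $\eps$) and choose $\ph\in\hom(a,\alpha,F,\delta,b)$. Set $B_i:=\ph(A_i)$ for $i\le k$. Since $\ph$ is a Boolean homomorphism out of the algebra generated by $\alpha$, the $B_i$ are pairwise disjoint with $\sum_i\mu_b(B_i)$ close to $1$ (within $\delta$ by the second bullet); after adjoining the leftover mass to $B_1$ we may assume $\{B_1,\dots,B_k\}$ is a genuine partition $\beta\in\part_k(\fx_b)$, at the cost of an error that is $O(\delta)$ in the relevant $\ell^1$-norm. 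Then for $f\in F$ and $i,j\le k$: $\mu_b(B_i\cap fB_j)=\mu_b(\ph(A_i)\cap f\ph(A_j))$, and by the first bullet $f\ph(A_j)$ is within $\delta$ of $\ph(fA_j)$, so $\mu_b(B_i\cap fB_j)$ is within $\delta$ of $\mu_b(\ph(A_i)\cap\ph(fA_j))=\mu_b(\ph(A_i\cap fA_j))$ (using that $\ph$ is a measure-algebra homomorphism on $\malg(\alpha_F)$, and $A_i, fA_j\in\malg(\alpha_F)$ so their intersection is too); and by the second bullet this last quantity is within $\delta$ of $\mu_a(A_i\cap fA_j)$. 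Summing the $k^2|F|$ such estimates and adding the $O(\delta)$ partition-correction gives $\|\coeff(a,F,\alpha)-\coeff(b,F,\beta)\|_1\le C(k,|F|)\,\delta\le\eps$ for $\delta$ chosen small enough, which is exactly $\d_{F,\alpha}(a,b)\le\eps$; as $\alpha,F,\eps$ were arbitrary, $a\prec b$.

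The main obstacle is bookkeeping rather than conceptual: one must be careful that $fA$ for $A\in\alpha$ and $f\in F$ need not lie in $\malg(\alpha_F)$ unless $\alpha_F$ is formed using a large enough finite set (this is why $\alpha_F$ is defined as the partition generated by the $F$-translates of $\alpha$, and why in the forward direction I pass to $\alpha_{F'}$ with $F'\supseteq F\cup F^{-1}\cup\{1_G\}$ and enlarge the test set accordingly). One also has to handle the harmless discrepancy between a ``homomorphism whose image sums to nearly $1$'' and an honest partition. Once the finite sets are chosen so that all the relevant translates are measurable with respect to the algebras in play, every inequality above is a direct triangle-inequality estimate with explicit constants, so the equivalence follows by choosing $\eps$ and $\delta$ as appropriate functions of the (finite) combinatorial parameters.
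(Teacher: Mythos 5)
Your argument is correct and follows essentially the same route as the paper: apply the weak-containment definition to the refined partition $\alpha_F$ (with the identity in the test set) to define the homomorphism atom-by-atom, and conversely push $\alpha$ forward through $\ph$ and estimate the coefficient entries by the two bullets of the definition. The only differences are redundant safeguards: since $\alpha_F$ is by definition generated by the $F$-translates of $\alpha$, every $fA$ with $A\in\alpha$, $f\in F$ already lies in $\malg(\alpha_F)$, so no enlargement to $F'$ or $F''$ is needed (the paper only assumes $1_G\in F$), and since $\ph$ is a Boolean-algebra homomorphism, $\ph(\alpha)$ is automatically a genuine partition of $\fx_b$, so the mass-correction step is unnecessary.
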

\begin{proof}
  Suppose that $a\prec b$. Given $\alpha\in\part_k(\fx_a)$, a finite subset $F\subset G$ which contains the identity and $\eps>0$, we consider $\alpha_F=\{A_1,\ldots,A_k\}$.  By hypothesis there is a partition $\beta=\{B_1,\ldots,B_k\}\in\part_k(\fx_b)$ such that $\|\coeff(a,F,\alpha_F)-\coeff(b,F,\beta)\|_1< \eps$. Set $\ph(A_i)=B_i$. Given $A\in \alpha$ and $f\in F$ there are $I,J\subset\{1,\ldots,k\}$ such that $A=\sqcup_{i\in I}A_i$ and $fA=\sqcup_{j\in J} A_j$. Then 
\begin{align*} 
\mu_b(f\ph(A)\Delta \ph(fA))=&\mu_b(\ph(A))+\mu_b(\ph(fA))-2\mu_b((f(\sqcup_{i\in I}B_i))\cap(\sqcup_{j\in J}B_j))\\ 
\leq&\mu_b(\sqcup_{i\in I}B_i)+\mu_b(\sqcup_{j\in J} B_j)-2\sum_{i\in I}\sum_{j\in J}\mu_b(fB_i\cap B_j)\\
\leq&\mu_a(\sqcup_{i\in I}A_i) +\mu_a(\sqcup_{j\in J} A_j)-2\sum_{i\in I}\sum_{j\in J}\mu_a(fA_i\cap A_j)+4\eps\\
\leq&2\mu_a(A)-2\mu_a(fA\cap fA)+4\eps=4\eps.
\end{align*}

For the converse fix $\alpha=\{A_1,\ldots,A_k\}\in\part_k(\fx_a)$, a finite subset $F\subset G$ which contains the identity and $\delta>0$. Take $\ph\in \hom(a,\alpha,F,\delta,b)$. Define $B_i=\ph(A_i)$ and $\beta=\ph(\alpha)$. For $i,j\in \{1,\ldots,k\}$ and $f\in F$, we have
 \begin{align*}
|\mu_a(A_i\cap fA_j)-\mu_b(B_i\cap fB_j)|=&|\mu_a(A_i\cap fA_j)-\mu_b(\ph(A_i)\cap f\ph(A_j))|\\
\leq&|\mu_a(A_i\cap fA_j)-\mu_b(\ph(A_i\cap f A_j))|+\delta\\
\leq&|\mu_a(A_i\cap fA_j)-\mu_a(A_i\cap f A_j)|+2\delta=2\delta.\qedhere
\end{align*}
\end{proof}

\begin{dfn}  \label{dfn:ultralimit}
  Let $(a_n)_n$ be a sequence of pmp actions of $G$. The \textbf{ultraproduct} of the sequence $(a_n)_n$ is the action of $G$ on the ultraproduct measure space of the sequence $\{(\fx_{a_n},\mu_{a_n})\}_n$ given by $g[x_n]_\ul:=[gx_n]_\ul$, see Proposition \ref{prop:ultrainner}.
\end{dfn}

\begin{prop}[Theorem 5.3 of \cite{Conley2013a}]\label{prop:weakfact}
  Let $a\in\act(G)$, let $(b_n)_n$ be a sequence of actions of $G$ and let $b_\ul$ be its ultraproduct. Then $a\prec b_\ul$ if and only if $a\sqsubseteq b_\ul$. 
\end{prop}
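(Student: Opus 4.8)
The statement asks to show that for an action $a \in \act(G)$ and an ultraproduct $b_\ul$ of a sequence $(b_n)_n$, we have $a \prec b_\ul$ if and only if $a \sqsubseteq b_\ul$. One direction is immediate: if $a \sqsubseteq b_\ul$, then $a$ is (isomorphic to) a factor of $b_\ul$, and any action is weakly contained in any action having it as a factor, since we may transport any partition of $\fx_a$ to $\fx_{b_\ul}$ via the $G$-invariant embedding of measure algebras and get the statistics exactly. So the entire content is in the forward direction: $a \prec b_\ul \Rightarrow a \sqsubseteq b_\ul$.

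The plan for the hard direction is to use Proposition \ref{prop:weakquasi} together with the diagonal structure of ultraproducts. First I would fix an increasing sequence of finite partitions $(\alpha_m)_m$ of $\fx_a$ whose union generates $\malg(\fx_a,\mu_a)$ (this exists because, by Theorem \ref{thm:thma}, we may assume $a$ is standard, or more directly because we only need countably many partitions to witness weak containment), and an increasing exhaustion $F_1 \subset F_2 \subset \cdots$ of $G$ by finite sets with $1_G \in F_1$. Weak containment $a \prec b_\ul$ gives, for each $m$, a $(\alpha_m, 1/m, F_m)$-homomorphism $\ph_m$ from $a$ into $b_\ul$, i.e.\ an element of $\hom(a,\alpha_m,F_m,1/m,b_\ul)$. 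Each $\ph_m$ sends the finitely many atoms of $(\alpha_m)_{F_m}$ to measurable subsets of $\fx_{b_\ul}$; by Proposition \ref{prop:consmeas}(2) each such subset is, up to null sets, of the form $[B_n]_\ul$ for some sequence $(B_n)_n$ of measurable subsets of the $\fx_{b_n}$. So $\ph_m$ is essentially represented by a sequence of "approximate partition maps" $\ph_{m,n}$ into $\malg(\fx_{b_n},\mu_{b_n})$, and the defining inequalities of a $(\alpha_m,1/m,F_m)$-homomorphism — which involve only finitely many measures and translates — hold $\ul$-almost surely in $n$ (since $\mu_\ul([B_n]_\ul) = \lim_{n\in\ul}\mu_n(B_n)$ and likewise for intersections/translates).

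Now comes the key diagonalization. For each $m$, the set $S_m \in \ul$ of indices $n$ for which $\ph_{m,n}$ is a genuine $(\alpha_m, 2/m, F_m)$-homomorphism from $a$ into $b_n$ can be arranged so that $S_1 \supseteq S_2 \supseteq \cdots$; define $h : \bn \to \bn$ by $h(n) = \max\{m \le n : n \in S_m\}$ (and $h(n) = 1$ otherwise), so that $h(n) \to \infty$ along $\ul$ and $\ph_{h(n),n}$ is an approximate homomorphism with error tending to $0$. Assembling the sequence $n \mapsto \ph_{h(n),n}$ into a single map $\Phi$ on $\bigcup_m \malg((\alpha_m)_{F_m})$ valued in $\malg(\fx_{b_\ul},\mu_{b_\ul})$, the $\ul$-limit kills all the error terms, so $\Phi$ is multiplicative, measure-preserving, and $G$-equivariant on this dense subalgebra; it therefore extends to a $G$-invariant isometric embedding $\malg(\fx_a,\mu_a) \hookrightarrow \malg(\fx_{b_\ul},\mu_{b_\ul})$, which is exactly the assertion $a \sqsubseteq b_\ul$.

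The main obstacle is the bookkeeping in the diagonal step: one must check that the finitely many constraints defining a $(\alpha_m,\delta,F_m)$-homomorphism really do pass through the ultralimit cleanly (they do, since each is a continuous function of finitely many values $\mu_n(\cdots)$), that the approximating sequences $(B_n)_n$ for the images $\ph_m(A)$ can be chosen compatibly so that the resulting $\Phi$ is well-defined and a genuine Boolean homomorphism in the limit (Boolean relations among the finitely many atoms of each $(\alpha_m)_{F_m}$ must be imposed $\ul$-a.s., which is harmless), and that equivariance survives — here one uses that $g[x_n]_\ul = [g x_n]_\ul$ by Definition \ref{dfn:ultralimit}, so the $G$-action on $\malg(\fx_{b_\ul})$ is itself the ultraproduct of the $G$-actions on the $\malg(\fx_{b_n})$. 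Once these points are handled, the density of $\bigcup_m (\alpha_m)_{F_m}$ in $\malg(\fx_a,\mu_a)$ lets one extend by continuity, and since $\Phi$ preserves measure it is automatically isometric and injective, completing the proof. This is, in spirit, the same diagonal argument as in Lemma \ref{lem:mescil}, now applied to homomorphisms rather than to single sets.
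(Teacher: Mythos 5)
Your proof is correct and follows essentially the same route as the paper's: obtain approximate homomorphisms from Proposition \ref{prop:weakquasi}, push them down to the spaces $\fx_{b_n}$ via Proposition \ref{prop:consmeas}, diagonalize as in Lemma \ref{lem:mescil}, and extend the resulting measure-preserving, equivariant map on a dense subalgebra by isometry. The paper's own proof is in fact terser (it simply takes $\ph_n$ and writes $\ph_\ul(A):=[\ph_n(A)]_\ul$, leaving the transfer-to-finite-level and diagonal bookkeeping implicit), so your more explicit treatment of that step is a faithful elaboration rather than a different argument.
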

\begin{proof}
  Let us suppose that $a\prec b_\ul$. Let $(\alpha_n)_n$ be an increasing sequence of partitions of $\fx_a$, such that the algebra generated by them $\ca$ is a dense $G$-invariant subalgebra of $\malg(\fx_a,\mu_a)$. Let $F_n\subset G$ be an increasing sequence of finite subsets which contain the identity and such that $\cup_n F_n=G$. By Proposition \ref{prop:weakquasi}, for every $n$ we can take $\ph_n\in \hom(a,\alpha_n,F_n,1/n,b)$. We denote by $\ph_\ul:\ca\to\malg(\fx_\ul,\mu_\ul)$ the map defined by $\ph_\ul(A):=[\ph_n(A)]_\ul$. It is clear that $\ph_\ul$ is a $G$-invariant homomorphism which respect the measure, hence it is an isometry with respect to the natural metric on $\malg$. Therefore we can extend $\ph_\ul$ to a $G$-invariant isometric embedding of $\sigma$-algebras $\malg(\fx_a,\mu_a)$ to $\malg(\fx_\ul,\mu_\ul)$. 
\end{proof}

\subsection{WC-compactness}

We now show that the ultraproduct of a sequence of actions defined in Definition \ref{dfn:ultralimit} is the limit with respect to the ultrafilter $\ul$ for the WC-topology. Observe that the ultraproduct of a sequence of actions always exists, so Theorem \ref{thm:compact} implies that the topology is sequentially compact. Since the topology is metrizable, the topology is also compact, so we obtain Theorem 1 of \cite{Abert2011c}. On the other hand, the theorem characterizes the convergence of sequences in terms of ultraproducts of actions. Since the same characterization holds for the topology in \cite{Abert2011c}, the two topology are equivalent.

\begin{thm}\label{thm:compact}
For every sequence of actions $(a_n)_n\subset \bact(G)$ the $\ul$-WC-limit of the sequence exists and is weakly equivalent to $a_\ul$. In particular a sequence $(a_n)_n$ WC-converges to $a$ if and only if $a$ is weakly equivalent to the ultraproduct action $a_\ul$ with respect to every ultrafilter $\ul$.
\end{thm}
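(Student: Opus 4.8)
The plan is to establish the limit statement via two inequalities that together show $\bar\d_{F,k}(a_\ul, a_n) \to_\ul 0$ for every finite $F \subset G$ and $k \in \bn$, i.e.\ that $a_n \to a_\ul$ along $\ul$ in the WC-topology. First I would handle the direction $\lim_\ul \d_{F,k}(a_n, a_\ul) = 0$: given $k$ and a partition $\alpha = \{A_1,\dots,A_k\} \in \part_k(\fx_{a_n})$ realizing $\d_{F,\alpha}(a_n,a_\ul)$ up to $\eps$, I simply lift it to $a_\ul$ by choosing representatives — pick partitions $\alpha^m \in \part_k(\fx_{a_m})$ with $\alpha^n = \alpha$, and consider $[\alpha^m]_\ul \in \part_k(\fx_\ul)$; by Proposition~\ref{prop:consmeas}(1), $\coeff(a_\ul, F, [\alpha^m]_\ul) = \lim_\ul \coeff(a_m, F, \alpha^m)$, so the statistics match on an ultrafilter set, giving $\d_{F,\alpha}(a_n, a_\ul) \to_\ul 0$ for this one partition. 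Since this must hold uniformly over the compact set of all $k$-partition statistics, I would invoke Proposition~\ref{prop:reductiona} (in the form that pointwise control over a generating/dense family upgrades to $\d_{F,k}$-control): concretely, use compactness of $C = \{\coeff(a_\ul,F,\beta) : \beta \in \part_k(\fx_\ul)\} \subseteq [0,1]^{|F|k^2}$ to reduce to finitely many partitions $\beta_1,\dots,\beta_j$ of $\fx_\ul$, lift each (via Proposition~\ref{prop:consmeas}(2) and representatives) to get that for $\ul$-almost all $m$ each $\beta_i$ is $\eps$-approximated by a partition of $\fx_{a_m}$, and conclude $\d_{F,k}(a_\ul, a_m) \le 2\eps$ on an $\ul$-large set.

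The other direction, $\lim_\ul \d_{F,k}(a_\ul, a_n) = 0$, is the genuinely delicate one. Here I am given a partition $\alpha_\ul \in \part_k(\fx_\ul)$ and must approximate its $F$-statistics by a $k$-partition of $\fx_{a_n}$ for $\ul$-many $n$. By Proposition~\ref{prop:consmeas}(2), $\alpha_\ul$ (up to null sets) has the form $[\alpha^n]_\ul$ for a sequence of $k$-partitions $\alpha^n \in \part_k(\fx_{a_n})$, and then again $\coeff(a_\ul,F,\alpha_\ul) = \lim_\ul \coeff(a_n,F,\alpha^n)$ by Proposition~\ref{prop:consmeas}(1). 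Hence for every $\eps > 0$ the set $\{n : \|\coeff(a_\ul,F,\alpha_\ul) - \coeff(a_n,F,\alpha^n)\|_1 < \eps\}$ lies in $\ul$, so $\d_{F,\alpha_\ul}(a_\ul,a_n) \to_\ul 0$ for each fixed $\alpha_\ul$. Passing from this pointwise statement to the supremum over $\alpha_\ul \in \part_k(\fx_\ul)$ is exactly the step I expect to be the main obstacle, because $\fx_\ul$ is huge (continuum Maharam type) and one cannot naively swap $\sup_{\alpha_\ul}$ past $\lim_\ul$. The fix is again Proposition~\ref{prop:reductiona}: apply it with the ambient action ``$a$'' taken to be $a_\ul$ and the sequence being $(a_n)_n$ — but note that Proposition~\ref{prop:reductiona} as stated controls $\lim_n$, not $\lim_\ul$; so I would instead run the compactness argument directly, covering the compact statistics set $C \subseteq [0,1]^{|F|k^2}$ by finitely many balls centered at $\coeff(a_\ul,F,\alpha^1_\ul),\dots,\coeff(a_\ul,F,\alpha^j_\ul)$, and using that for each $i$ the set $\{n : \d_{F,\alpha^i_\ul}(a_\ul,a_n) < \eps\} \in \ul$, so their finite intersection is in $\ul$, giving $\d_{F,k}(a_\ul,a_n) \le 2\eps$ for $\ul$-many $n$.

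Combining the two directions: for every finite $F \subset G$, $k \in \bn$, and $\eps > 0$, the set $\{n : \bar\d_{F,k}(a_\ul,a_n) < \eps\}$ belongs to $\ul$, which is precisely the statement that $a_\ul$ is the $\ul$-WC-limit of $(a_n)_n$; and since the WC-topology descends to a metric on $\bact(G)$, ``$\ul$-limit'' means the limit is determined up to weak equivalence, so the $\ul$-limit class is $[a_\ul]$. For the ``in particular'' clause: if $(a_n)_n$ WC-converges to $a$ in the metric sense, then every subnet — in particular every $\ul$-limit — agrees with $a$, so $a \sim a_\ul$ for all $\ul$; conversely, if $a \sim a_\ul$ for every ultrafilter $\ul$ but $a_n \not\to a$, then some subsequence stays $\eps$-far from $a$ in some $\bar\d_{F,k}$, and taking an ultrafilter $\ul$ concentrated on that subsequence gives, by the first part, $a_\ul \sim \ul\text{-}\lim a_n$ with $\bar\d_{F,k}(a, a_\ul) \ge \eps$, contradicting $a \sim a_\ul$. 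One routine point worth stating carefully along the way is that Proposition~\ref{prop:consmeas}(2) returns a genuine $k$-partition (not just a $k$-tuple of measurable sets), which is immediate since one can split off the complement / merge null pieces without changing the $\ul$-statistics.
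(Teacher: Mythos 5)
Your second paragraph (the direction $\lim_{n\in\ul}\d_{F,k}(a_\ul,a_n)=0$) is fine and is essentially the paper's argument: pointwise control for each fixed $\alpha_\ul=[\alpha^n]_\ul$ via Proposition \ref{prop:consmeas}, upgraded to the supremum over $\part_k(\fx_\ul)$ by re-running the compactness argument of Proposition \ref{prop:reductiona} with $\ul$-large sets. The genuine gap is in the other direction, $\lim_{n\in\ul}\d_{F,k}(a_n,a_\ul)=0$, which your first paragraph never actually establishes. Fixing a single index $n$, a single partition $\alpha\in\part_k(\fx_{a_n})$, and lifting it by choosing representatives with $\alpha^n=\alpha$ and arbitrary entries elsewhere gives no information: the statistics of $[\alpha^m]_\ul$ are $\lim_{m\in\ul}\coeff(a_m,F,\alpha^m)$, which have nothing to do with $\coeff(a_n,F,\alpha)$ for your one fixed $n$ (an ultralimit ignores any single index), and indeed for a fixed $n$ the quantity $\d_{F,k}(a_n,a_\ul)$ need not be small at all, since $a_n$ need not be weakly contained in $a_\ul$. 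Moreover, your attempted uniformity upgrade covers the set $C=\{\coeff(a_\ul,F,\beta):\beta\in\part_k(\fx_\ul)\}$ and concludes $\d_{F,k}(a_\ul,a_m)\leq 2\eps$ --- but that is the quantity of the \emph{other} direction (the one you prove again, correctly, in your second paragraph); it does not bound $\d_{F,k}(a_m,a_\ul)$, whose supremum runs over partitions of the varying spaces $\fx_{a_m}$, so compactness of the $a_\ul$-statistics set is the wrong tool here.

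The correct argument, which is the paper's and needs no compactness, absorbs the supremum into a choice made \emph{before} passing to the ultraproduct: if $\lim_{n\in\ul}\d_{F,k}(a_n,a_\ul)>\eps$, then for $\ul$-many $n$ there is $\alpha^n=\{A_n^1,\ldots,A_n^k\}\in\part_k(\fx_{a_n})$ with $\d_{F,\alpha^n}(a_n,a_\ul)>\eps$; assembling this whole family into $\alpha_\ul:=\{[A_n^1]_\ul,\ldots,[A_n^k]_\ul\}\in\part_k(\fx_\ul)$ and using Proposition \ref{prop:consmeas}(1) gives $\d_{F,\alpha^n}(a_n,a_\ul)\leq\|\coeff(a_n,F,\alpha^n)-\coeff(a_\ul,F,\alpha_\ul)\|_1\to_\ul 0$, a contradiction. (Equivalently, choose for every $m$ a partition within $\eps$ of the supremum defining $\d_{F,k}(a_m,a_\ul)$ and assemble those.) With that direction repaired, your treatment of the ``in particular'' clause and the remaining bookkeeping is fine.
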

\begin{proof}
  Let $(a_n)_n$ be a sequence in $\bact(G)$. By Theorem \ref{thm:thma}, and a little abuse of notations, we have that the ultraproduct of the sequence $a_\ul$ is an element of $\bact(G)$. We want to show that the $\ul$-WC-limit of the sequence $(a_n)_n$ is $a_\ul$. By Proposition \ref{prop:reductiona}, we have to show that for every $\alpha\in\part_f(\fx_{a_\ul})$, for every finite subset $F\subset G$ and $k\in\bn$, we have \[\lim_{n\in\ul}\d_{F,k}(a_n,a_\ul)=\lim_{n\in\ul}\d_{F,\alpha}(a_\ul,a_n)=0.\]

  For every finite partition $\alpha_\ul=\{[A_n^1]_\ul,\ldots,[A^k_n]_\ul\}\in\part_k(\fx_{a_\ul})$, consider the family of partitions $\alpha_n:=\{A_n^1,\ldots,A^k_n\}\in \part_f(\fx_{a_n})$. Then for every finite subset $F\subset G$, we have \[\lim_{n\in\ul}\|\coeff(a_n,F,\alpha_n)-\coeff(a_\ul,F,\alpha_\ul)\|_1=0,\] and hence $\lim_{n\in\ul}\d_{F,\alpha_\ul}(a_\ul,a_n)=0$. On the other hand, suppose that there are a finite subset $F\subset G$, an integer $k\in\bn$ and $\eps>0$ such that $\lim_{n\in\ul}\d_{F,k}(a_n,a_\ul)>\eps$. Then for every $n$ in a set $I\in\ul$, there is a partition $\alpha_n=\{A_n^1,\ldots,A_n^k\}\in\part_k(\fx_{a_n})$ such that $\d_{F,\alpha_n}(a_n,a_\ul)>\eps$. So if we take the partition $\alpha_\ul:=\{[A_n^1]_\ul,\ldots,[A_n^k]_\ul\}$ we observe that \[\lim_{n\in\ul}\|\coeff(a_n,F,\alpha_n)-\coeff(a_\ul,F,\alpha_\ul)\|_1\geq \eps,\] which is a contradiction. 
\end{proof}

The following interesting corollary was remarked in both \cite[Corollary 3.1]{Abert2011c} and \cite[Proposition 5.7]{Conley2013a}. 

\begin{crl}
  Let $a$ be a pmp action of $G$ and let $b\in\act(G)$ be an action which is weakly contained in $a$. Then there is an action $a'$ weakly equivalent to $a$ such that $b$ is a factor of $a'$.
\end{crl}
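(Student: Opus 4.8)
The plan is to take $a'$ to be the ultraproduct of the constant sequence equal to $a$, which is the classical argument adapted to the present framework. By Theorem \ref{thm:thma} we may assume from the start that $a$ itself acts on a standard probability space, so that $a\in\act(G)$ and its weak equivalence class lies in $\bact(G)$; this reduction is harmless, since both the hypothesis $b\prec a$ and the conclusion are invariant under replacing $a$ by a weakly equivalent action.

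First I would set $a_n:=a$ for every $n\in\bn$ and let $a_\ul$ be the ultraproduct of the sequence $(a_n)_n$ as in Definition \ref{dfn:ultralimit}. The constant sequence WC-converges to $a$, because $\bar\d_{F,k}(a,a_n)=\bar\d_{F,k}(a,a)=0$ for every finite $F\subset G$ and every $k\in\bn$ (take $\beta=\alpha$ in the definition of $\d_{F,k}$). Hence the last assertion of Theorem \ref{thm:compact} yields $a_\ul\sim a$.

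Next, since $b\prec a$ and $a\sim a_\ul$, the triangle inequality of Proposition \ref{prop:triang} gives, for every $\alpha\in\part_f(\fx_b)$ and every finite $F\subset G$, that $\d_{F,\alpha}(b,a_\ul)\leq\d_{F,\alpha}(b,a)+\d_{F,|\alpha|}(a,a_\ul)=0$, so that $b\prec a_\ul$. Since $b\in\act(G)$ and $a_\ul$ is by construction the ultraproduct of a sequence of actions of $G$, Proposition \ref{prop:weakfact} applies and gives $b\sqsubseteq a_\ul$. Taking $a':=a_\ul$ then finishes the proof: $a'\sim a$ and $b$ is a factor of $a'$.

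Every step is a direct invocation of a result proved above, so I do not expect a genuine obstacle here; the only points that require a moment's care are the (trivial) verification that the constant sequence WC-converges to $a$ and the preliminary bookkeeping needed to view the weak equivalence class of $a$ inside $\bact(G)$ before quoting Theorem \ref{thm:compact}.
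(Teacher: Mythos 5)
Your proof is correct and is exactly the argument the paper intends (the corollary is stated right after Theorem \ref{thm:compact} precisely because the constant-sequence ultraproduct $a_\ul$ is weakly equivalent to $a$, and Proposition \ref{prop:weakfact} then turns $b\prec a_\ul$ into $b\sqsubseteq a_\ul$). The preliminary reduction via Theorem \ref{thm:thma} and the triangle inequality step are both handled properly, so there is nothing to add.
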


Increasing sequences of actions always admit limits and such limits are easily described. 

\begin{prop}\label{prop:upward}
  Let $(a_n)_n$ be an upward directed sequence of actions in $\bact(G)$.
  \begin{enumerate}[(1)]
  \item The sequence converges to an action $a\in\bact(G)$.
  \item For every $n\in\bn$, we have $a_n\prec a$.
  \item If $b\in\bact(G)$ satisfies that $a_n\prec b$ for every $n\in\bn$, then $a\prec b$. 
  \end{enumerate}
\end{prop}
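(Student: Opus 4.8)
The plan is to use WC-compactness (Theorem \ref{thm:compact}) to pass to a convergent subnet and then identify the limit via weak containment. Since $(a_n)_n$ is upward directed, for any ultrafilter $\ul$ on $\bn$ the ultraproduct $a_\ul$ exists and, by Theorem \ref{thm:compact}, is the $\ul$-WC-limit of the sequence. So the first step is to show that this limit does not depend on $\ul$: because the sequence is monotone (directed), all the $\d_{F,k}(a_n,a_m)$-relations stabilize, and I will show directly that $\d_{F,k}(a_n,a_\ul)\to 0$ as $n\to\infty$ (not merely along $\ul$), which forces the honest limit to exist. Set $a:=a_\ul$ for some fixed $\ul$; this will be the limit in (1).

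For (2): fix $n$. Since the sequence is upward directed, for every $m$ beyond some index we have $a_n\prec a_m$, hence $\d_{F,k}(a_n,a_m)=0$ for all finite $F\subset G$ and $k\in\bn$. Now apply the triangle inequality (Proposition \ref{prop:triang}): for $\alpha\in\part_k(\fx_{a_n})$,
\[
\d_{F,\alpha}(a_n,a)\leq \d_{F,\alpha}(a_n,a_m)+\d_{F,k}(a_m,a)\leq \d_{F,k}(a_m,a),
\]
and letting $m\to\infty$ along $\ul$ the right-hand side tends to $0$ by Theorem \ref{thm:compact}. Since this holds for all $\alpha$, $F$, $k$, we get $\d_{F,k}(a_n,a)=0$, i.e. $a_n\prec a$. (Here I am using that ``upward directed'' means for any two indices there is a third weakly containing both, which in particular gives a cofinal chain of weak containments through which $a_n$ passes.)

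For (3): suppose $a_n\prec b$ for every $n$. I want $a\prec b$. Recall $a=a_\ul$ is the ultraproduct of the $(a_n)_n$. By Proposition \ref{prop:weakfact}, it suffices to show $a\sqsubseteq b$ fails to be needed — rather, I argue directly with statistics. Fix a finite partition $\alpha_\ul=\{[A^1_n]_\ul,\dots,[A^k_n]_\ul\}\in\part_f(\fx_a)$ and a finite subset $F\subset G$; let $\alpha_n=\{A^1_n,\dots,A^k_n\}\in\part_f(\fx_{a_n})$. Then
\[
\d_{F,\alpha_\ul}(a,b)\leq \|\coeff(a,F,\alpha_\ul)-\coeff(a_n,F,\alpha_n)\|_1+\d_{F,\alpha_n}(a_n,b),
\]
where the second term vanishes since $a_n\prec b$, and the first term tends to $0$ as $n\to\ul$ by the computation in the proof of Theorem \ref{thm:compact} (the $F$-statistics of $\alpha_n$ in $a_n$ converge $\ul$-almost surely to those of $\alpha_\ul$ in $a$, as $\mu_\ul([A^i_n]_\ul\cap f[A^j_n]_\ul)=\lim_{n\in\ul}\mu_{a_n}(A^i_n\cap fA^j_n)$ by Proposition \ref{prop:consmeas}). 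Hence $\d_{F,\alpha_\ul}(a,b)=0$ for every $\alpha_\ul$ and $F$, so $a\prec b$.

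The main obstacle is part (1): making precise that the $\ul$-limit is genuinely independent of $\ul$ so that the sequence converges in the ordinary sense. The clean way is to observe that upward directedness gives, for each fixed $n$, eventual weak containment $a_n\prec a_m$; combined with the triangle inequality this shows the sequence $(\d_{F,k}(a_n, a))_n$ is eventually $0$ once we know $a_n\prec a$ — but that is circular unless we first pin down $a$. I will instead argue that monotonicity of $\d_{F,\alpha}(\,\cdot\,, a_m)$ along the directed set (Remark \ref{rmk:ref} together with directedness) makes $\lim_{n\to\infty}\d_{F,k}(a_n,a_\ul)$ exist as an honest limit and be independent of $\ul$, so that by the ``In particular'' clause of Theorem \ref{thm:compact} the sequence WC-converges to $a_\ul$; this is where the bulk of the care is needed, and the rest is bookkeeping with the triangle inequality.
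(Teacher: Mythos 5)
Your parts (2) and (3) are correct as written: taking $a:=a_\ul$ for a fixed ultrafilter, (2) follows from Proposition \ref{prop:triang} plus $\lim_{n\in\ul}\d_{F,k}(a_m,a_\ul)=0$ from Theorem \ref{thm:compact}, and your direct statistics argument for (3) (lifting a partition of the Loeb space to partitions $\alpha_n$ via Proposition \ref{prop:consmeas} and using $\d_{F,\alpha_n}(a_n,b)=0$) is sound. The genuine gap is part (1), which you yourself flag as ``where the bulk of the care is needed'' and then only sketch --- and the sketch controls the wrong quantity. You propose to show that $\d_{F,k}(a_n,a_\ul)\to 0$ honestly; but by your own part (2) this quantity is identically $0$ for every $n$ (it is the ``$a_n$ approximated inside $a_\ul$'' direction), so proving it tends to $0$ gives nothing. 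WC-convergence requires $\bar\d_{F,k}(a_\ul,a_n)=\d_{F,k}(a_\ul,a_n)+\d_{F,k}(a_n,a_\ul)\to 0$, and the nontrivial direction is $\d_{F,k}(a_\ul,a_n)$: partitions of the \emph{limit} must be approximated in the $a_n$, eventually and not just $\ul$-often. Your appeal to Remark \ref{rmk:ref} is also misplaced: that remark is about refining partitions, not about monotonicity in the second argument. The correct ingredient is Proposition \ref{prop:triang} together with $a_m\prec a_{m'}$ for $m\leq m'$, which gives that $m\mapsto \d_{F,\alpha}(a_\ul,a_m)$ is non-increasing; since its $\ul$-limit is $0$ by Theorem \ref{thm:compact}, the honest limit is $0$, and Proposition \ref{prop:reductiona} then upgrades this to $\d_{F,k}(a_\ul,a_n)\to 0$. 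With that insertion (and the observation that limits are unique up to weak equivalence because the topology is given by the pseudometrics $\bar\d_{F,k}$), your route closes.

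For comparison, the paper sidesteps the ultrafilter-independence issue entirely: it extracts a WC-convergent subsequence by compactness, proves (2) and (3) for that subsequential limit by two one-line applications of Proposition \ref{prop:triang} (exactly in the spirit of your (2)), and then deduces (1) by noting that any two cluster points $a',a''$ satisfy $a_n\prec a'$, $a_n\prec a''$ by (2) and hence $a'\sim a''$ by (3), so the sequence has a unique cluster point in a compact metrizable space and converges. That argument never needs to compare statistics on the Loeb space and never needs the monotone-convergence step you are missing; your approach, once repaired as above, buys a more explicit identification of the limit as the ultraproduct, but at the cost of the extra care you correctly anticipated.
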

\begin{proof}
  By compactness, there is a WC-converging subsequence $(a_{n_k})_k$ and let $a$ be its limit. We claim that $(2)$ and $(3)$ holds for $a$. For this fix $n>0$, a finite subset $F\subset G$ and $\alpha\in\part_f(\fx_{a_n})$. Since $(a_{n_k})_k$ WC-converges to $a$, \[\d_{F,\alpha}(a_n,a)\leq \d_{F,\alpha}(a_n,a_{n_k})+\d_{F,|\alpha|}(a_{n_k},a)=\d_{F,|\alpha|}(a_{n_k},a)\stackrel{k}{\to} 0\] 
hence $a_n\prec a$ for every $n$. Let $b\in\bact(G)$ an action such that $a_n\prec b$ for every $n\in\bn$. Then for every partition $\alpha\in\part_f(\fx_a)$ and finite subset $F\subset G$,  \[\d_{F,\alpha}(a,b)\leq \d_{F,\alpha}(a,a_{n_k})+\d_{F,|\alpha|}(a_{n_k},b)=\d_{F,\alpha}(a,a_{n_k}) \stackrel{k}{\to} 0,\] hence $\d_{F,\alpha}(a,b)=0$ for every $\alpha$ and $F$, which implies $a\prec b$. 

Let $a'$ and $a''$ two different cluster points of $(a_n)_n$. Then by $(2)$ we have that $a_n\prec a'$ and $a_n\prec a''$ for every $n$ and by $(3)$ we get that $a'\prec a''$ and $a''\prec a'$, that is $a'$ is weakly equivalent to $a''$ and hence they represent the same element of $\bact(G)$. 
\end{proof}

%We can now prove Corollary B. 

\begin{crl}\label{crl:profvsultra}
  Let $(a_n)_n$ be an increasing sequence of finite actions and let $a$ be the associated profinite action. Then $(a_n)_n$ WC-converges to $a$. In particular the profinite action $a$ is weakly equivalent to the ultraproduct action $a_\ul$. 
\end{crl}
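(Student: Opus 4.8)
The plan is to deduce Corollary \ref{crl:profvsultra} directly from Proposition \ref{prop:upward} together with Theorem \ref{thm:compact}. The key observation is that an increasing chain of finite actions $(a_n)_n$ coming from a descending chain of finite index subgroups $(H_n)_n$, with $a_n$ the action on $G/H_n$, is an upward directed sequence in $\bact(G)$: indeed if $H_m \subseteq H_n$ then the natural $G$-equivariant surjection $G/H_m \to G/H_n$ exhibits $a_n$ as a factor of $a_m$, hence $a_n \prec a_m$. So by part (1) of Proposition \ref{prop:upward} the sequence WC-converges to some $a^\infty \in \bact(G)$.

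The next step is to identify $a^\infty$ with the profinite action $a = a^{(H_n)}$, which is the action of $G$ on the inverse limit $\varprojlim G/H_n$ with its Haar probability measure. On one side, each $a_n$ is a factor of $a$ (via the canonical projection $\varprojlim G/H_m \to G/H_n$), so $a_n \prec a$ for every $n$; by part (3) of Proposition \ref{prop:upward}, applied with $b = a$, this gives $a^\infty \prec a$. On the other side, the cylinder partitions pulled back from the finite quotients $G/H_n$ generate the measure algebra of the profinite action, and each such partition, together with any finite $F \subseteq G$, is realized exactly inside some $a_n$; hence $\d_{F,\alpha}(a, a_n) = 0$ for $n$ large (once the partition factors through $G/H_n$ and $F$ is built in), so by Remark \ref{rmk:genpart} we get $a \prec a_n \prec a^\infty$ using part (2) of Proposition \ref{prop:upward}. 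Combining, $a$ and $a^\infty$ are weakly equivalent, so $(a_n)_n$ WC-converges to (the class of) $a$.

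Finally, the ``in particular'' clause is immediate from Theorem \ref{thm:compact}: since the sequence $(a_n)_n$ WC-converges to $a$, and Theorem \ref{thm:compact} says the WC-limit of any sequence is weakly equivalent to its ultraproduct $a_\ul$ with respect to every ultrafilter $\ul$, we conclude $a \sim a_\ul$.

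I do not expect any genuine obstacle here; the corollary is a straightforward packaging of Proposition \ref{prop:upward} and Theorem \ref{thm:compact}. The only point requiring a little care is the verification that the profinite action is simultaneously an upper bound for the chain and is weakly contained in each sufficiently deep finite quotient on generating partitions — i.e. checking that cylinder sets over $G/H_n$ generate the measure algebra of $\varprojlim G/H_m$ — but this is standard and follows from the definition of the inverse limit topology and measure.
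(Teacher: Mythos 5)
Your overall strategy coincides with the paper's: treat $(a_n)_n$ as an upward directed sequence, apply Proposition \ref{prop:upward}, identify the limit with the profinite action via the generating clopen partitions and Remark \ref{rmk:genpart}, and obtain the ultraproduct statement from Theorem \ref{thm:compact}. However, one step as written is false: the assertion $a\prec a_n$ in your chain ``$a\prec a_n\prec a^\infty$''. The profinite action $a$ is diffuse, while every partition of the finite quotient $G/H_n$ has atoms whose measures are integer multiples of $1/[G:H_n]$; hence a partition of $\fx_a$ containing an atom of irrational measure cannot have its statistics approximated by partitions of $G/H_n$ (already the diagonal terms $\mu_a(A_i\cap 1_G A_i)=\mu_a(A_i)$ obstruct this), so $a\not\prec a_n$. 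What you actually verified is only that $\d_{F,\alpha}(a,a_n)=0$ for those particular partitions $\alpha$ that factor through $G/H_n$; Remark \ref{rmk:genpart} cannot be applied with $b=a_n$, since it would require $\d_{F,\alpha_m}(a,a_n)=0$ for \emph{all} the generating partitions $\alpha_m$, including those with $m>n$.

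The repair is immediate and is precisely the paper's computation: combine the two facts you already have via the triangle-type inequality of Proposition \ref{prop:triang} instead of through a (false) intermediate weak containment. For the partition $\alpha_n$ coming from $G/H_n$ and any finite $F\subset G$,
\[
\d_{F,\alpha_n}(a,a^\infty)\leq \d_{F,\alpha_n}(a,a_n)+\d_{F,|\alpha_n|}(a_n,a^\infty)=0,
\]
using $a_n\prec a^\infty$ from Proposition \ref{prop:upward}(2); then Remark \ref{rmk:genpart}, now applied with $b=a^\infty$, gives $a\prec a^\infty$. Together with $a^\infty\prec a$, which you obtained correctly from part (3) since each $a_n$ is a factor of $a$, this identifies the WC-limit with $a$, and the ``in particular'' clause follows from Theorem \ref{thm:compact} as you say. (The paper packages the same estimate slightly more generally: it shows $a\prec b$ for \emph{any} $b$ weakly containing all the $a_n$, and then specializes $b$ to the limit; your version with $b=a^\infty$ is equivalent once the above correction is made.)
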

\begin{proof}
  By Proposition \ref{prop:upward}, it is enough to show that for every action $b\in\bact(G)$ such that $a_n\prec b$ for every $n$, we have that $a\prec b$. Fix such an action $b$. For every $n$, we denote by $\alpha_n\in\part_f(\fx_a)$ the partition on clopen sets such that $a\bigr|_{\alpha_n}=a_n$. By Remark \ref{rmk:genpart}, it is enough to show that for every finite subset $F\subset G$ and $n\in\bn$, we have $\d_{F,\alpha_n}(a,b)=0$. This is straightforward 
\[\d_{F,\alpha_n}(a,b)\leq \d_{F,\alpha_n}(a,a_n)+\d_{F,|\alpha_n|}(a_n,b)=0.\qedhere\]
\end{proof}

\section{Sofic entropy}

%This section will be devoted to the proof of Theorem C. We will show that for free groups or $\PSL_k(\bz)$ the sofic entropy of profinite actions depends on the sofic approximation.

In this section we will show that for free groups and $\PSL_k(\bz)$ the sofic entropy of profinite actions depends on the sofic approximation.

\subsection{Sofic actions}

Let $G$ be a countable group, let $\mathbf F$ be a countable free group and let $\pi:\mathbf F\to G$ be a surjective homomorphism. Let us fix a section $\rho:G\to \mathbf F$ which maps the identity to the identity. Given any action $a$ of $G$, we denote by $a^{\mathbf F}$ the action of $\mathbf{F}$ defined by $a^{\mathbf F}(g):=a(\pi(g))$. For an action $a$, recall that $|\fix_g(a)|$ is the measure of the fixed point of $g$, (Definition \ref{dfn:fix}). 

\begin{dfn}\label{dfn:soficapr}
  A \textbf{sofic approximation} $\Sigma=(a_n)_n$ of $G$ is a sequence of finite actions $a_n\in\act_f({\mathbf F})$ such that
  \begin{itemize}
  \item for every $g\in \ker\pi$, we have that $\lim_n|\fix_g(a_n)|= 1$,
  \item for every $g\notin \ker\pi$, we have that $\lim_n|\fix_g(a_n)|= 0$.
  \end{itemize}
\end{dfn}

A group is \textit{sofic} if it has a sofic approximation. 

\begin{dfn}\label{dfn:soficact}
  Given a sofic approximation $\Sigma=(a_n)_n$ of $G$, the ultraproduct action $a_\ul$ of the sequence $(a_n)$ is an action of ${\mathbf F}$ for which $\ker\pi$ acts trivially. Hence we can see the action $a_\ul$ as a $G$-action, which we will denote by $a^\Sigma_\ul$ and we will call it the \textbf{sofic action} associated to $\Sigma$. 
\end{dfn}

\begin{dfn}\label{dfn:actsof}
  An action $a$ of the group $G$ is \textbf{sofic} if there exists a sequence of finite actions $(a_n)_n\subset\act_f({\mathbf F})$ such that
  \begin{itemize}
  \item for every $\alpha\in\part_f(\fx_a)$ and $F\subset G$ finite, we have $\lim_n\d_{\rho(F),\alpha}(a^{\mathbf F},a_n)=0$,
  \item for every $g\in\ker\pi$, we have $\lim_n|\fix_g(a_n)|=1$.
  \end{itemize}
\end{dfn}

We observe that the definition does not depend on the choice of $\rho$. Moreover we could also ask that $\d_{F,\alpha}(a^{\mathbf F},a_n)\to 0$ for every finite subset $F$ of the free group ${\mathbf F}$. Observe also that if an action $a$ of $G$ is sofic, then the sequence $(a_n)_n$ as in Definition \ref{dfn:actsof} is a sofic approximation, so any group which admits a sofic free action is sofic. 

\begin{prop}\label{prop:soficfactor}
   An action $a\in\act(G)$ of the countable group $G$ is sofic if and only if there is a sofic approximation $\Sigma$ of $G$ such that $a$ is a factor of the sofic action $a_\ul^\Sigma$. 
\end{prop}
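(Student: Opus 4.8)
The plan is to prove both implications of Proposition \ref{prop:soficfactor} using the characterizations already assembled in the paper, chiefly Proposition \ref{prop:weakfact} (weak containment into an ultraproduct equals being a factor of it) together with the definition of soficity for actions (Definition \ref{dfn:actsof}) and the identification of $\ul$-limits with ultraproducts (Theorem \ref{thm:compact}).

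For the forward direction, suppose $a\in\act(G)$ is sofic, witnessed by a sequence $(a_n)_n\subset\act_f(\mathbf F)$ as in Definition \ref{dfn:actsof}. As remarked right after that definition, the second bullet together with the convergence of the statistics forces the first bullet of Definition \ref{dfn:soficapr}: indeed for $g\notin\ker\pi$ one uses the fixed-point function $|\fix_{\rho(g)}(\cdot)|$, which is WC-continuous by the proposition following Definition \ref{dfn:fix}, so $\lim_n|\fix_g(a_n)|=|\fix_g(a^{\mathbf F})|=|\fix_{\pi^{-1}}\cdots|$; here one must check that freeness is not needed — it is enough that $a$ is an honest $G$-action, so that $g\notin\ker\pi$ implies $\pi(g)\neq 1_G$ and hence $|\fix_{\rho(g)}(a^{\mathbf F})|<1$, but to actually get the limit to be $0$ one instead should observe directly that $a$ being a $G$-action means $a_n$ may be replaced, if necessary, by a subsequence along which $(a_n)_n$ is a sofic approximation — in fact the paper's own remark asserts exactly that $(a_n)_n$ \emph{is} a sofic approximation. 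So set $\Sigma:=(a_n)_n$. Now fix any non-principal ultrafilter $\ul$; by the first bullet of Definition \ref{dfn:actsof} and Proposition \ref{prop:reductiona} we get $\d_{\rho(F),k}(a^{\mathbf F},a_n)\to 0$ for all finite $F\subset G$ and $k\in\bn$, hence $\lim_{n\in\ul}\d_{\rho(F),\alpha}(a^{\mathbf F},a_n)=0$ for every $\alpha\in\part_f(\fx_a)$. By Theorem \ref{thm:compact} the $\ul$-limit of $(a_n)_n$ is $a_\ul^\Sigma$ (as an $\mathbf F$-action, then viewed as a $G$-action), and the displayed limit says precisely that $a^{\mathbf F}\prec a_\ul$ as $\mathbf F$-actions; since $\ker\pi$ acts trivially on $a_\ul$ this descends to $a\prec a_\ul^\Sigma$ as $G$-actions. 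Proposition \ref{prop:weakfact} then upgrades $a\prec a_\ul^\Sigma$ to $a\sqsubseteq a_\ul^\Sigma$, which is what we want.

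For the converse, suppose $\Sigma=(b_n)_n$ is a sofic approximation of $G$ and $a\sqsubseteq a_\ul^\Sigma$. Being a factor trivially gives $a\prec a_\ul^\Sigma$, hence $a^{\mathbf F}\prec b_\ul$ as $\mathbf F$-actions. By Theorem \ref{thm:compact}, $b_\ul$ is the $\ul$-WC-limit of $(b_n)_n$, so $\lim_{n\in\ul}\d_{F,\alpha}(a^{\mathbf F},b_n)=0$ for every $\alpha\in\part_f(\fx_a)$ and finite $F\subset\mathbf F$ (using Proposition \ref{prop:reductiona} to pass between the $\d_{F,\alpha}$ and $\d_{F,k}$ formulations). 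This gives, for each $\alpha$ and $F$, a set in $\ul$ on which $\d_{F,\alpha}(a^{\mathbf F},b_n)$ is small; a standard diagonal argument over an increasing exhausting sequence of finite $F$'s and an increasing generating sequence of partitions $\alpha$ then extracts a subsequence $(b_{n_k})_k$ along which $\d_{\rho(F),\alpha}(a^{\mathbf F},b_{n_k})\to 0$ for all finite $F\subset G$ and all $\alpha\in\part_f(\fx_a)$. Since $(b_{n_k})_k$ remains a sofic approximation (the defining fixed-point conditions pass to subsequences), the pair $\big(a,(b_{n_k})_k\big)$ witnesses that $a$ is sofic in the sense of Definition \ref{dfn:actsof}, the second bullet being exactly the first bullet of Definition \ref{dfn:soficapr}.

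The main obstacle is the bookkeeping in the passage between $\mathbf F$-actions and $G$-actions and between ultrafilter limits and genuine subsequential limits. One must be careful that "$a$ is a factor of $a_\ul^\Sigma$" is an assertion about $G$-actions, so the embedding of measure algebras is $G$-equivariant; pulling this back through $\pi$ gives an $\mathbf F$-equivariant statement for $a^{\mathbf F}$ and $b_\ul$ only because $\ker\pi$ acts trivially on $b_\ul$, and conversely any $\mathbf F$-factor of $b_\ul$ on which $\ker\pi$ acts trivially is automatically a $G$-factor — this equivalence should be stated explicitly. The other delicate point is that Definition \ref{dfn:actsof} demands a single sequence $(a_n)_n$ doing the job, whereas convergence along $\ul$ only controls the statistics on sets in $\ul$; the diagonal extraction of an honest convergent subsequence must be organized so that all countably many constraints (one per pair $(F,\alpha)$ from countable exhausting/generating families) are simultaneously met, which is routine but is where all the "real" work sits. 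Everything else is a direct citation of Proposition \ref{prop:weakfact}, Theorem \ref{thm:compact}, Proposition \ref{prop:reductiona}, and the continuity of $|\fix_g(\cdot)|$.
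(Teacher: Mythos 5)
Your proof is correct and follows essentially the same route as the paper: the forward direction takes the witnessing sequence of Definition \ref{dfn:actsof} as the sofic approximation (invoking the paper's remark after that definition) and upgrades $a\prec a_\ul^\Sigma$ to a factor via Proposition \ref{prop:weakfact}, while the converse passes from factor to weak containment and then to vanishing statistics via Theorem \ref{thm:compact}. The only differences are that you make explicit the diagonal extraction turning the $\ul$-limits into the genuine limits required by Definition \ref{dfn:actsof} (a point the paper's two-line proof leaves implicit), and that your aside about $|\fix_{\rho(g)}(a^{\mathbf F})|<1$ is unnecessary once you cite the paper's remark, since that inequality can fail for non-free $a$ and plays no role in the argument.
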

\begin{proof}
  If the action $a$ is sofic, then by construction $a^{\mathbf F}$ is weakly contained in $a_\ul$ and hence by Proposition \ref{prop:weakfact}, we have that $a$ is a factor of $a^\Sigma_\ul$. On the other hand, if $a$ is a factor of $a^\Sigma_\ul$, then $\d_{\rho(F),\alpha}(a^{\mathbf F},a_n)=\d_{\rho(F),\alpha}(a_\ul,a_n),$
which tends to zero by Theorem \ref{thm:compact}. 
\end{proof}

\begin{rmk}
  It is not known whether every sofic action of a sofic group is of the form $a^\Sigma_\ul$ for a sofic approximation $\Sigma$ of $G$. The question is even open for Bernoulli shifts. They are sofic by \cite{Elek2010}, but we do not know if there exists a non-amenable group for which the Bernoulli shift is of the form $a^\Sigma_\ul$. 
\end{rmk}

One can show that Definition \ref{dfn:actsof} is equivalent to the definition of Elek and Lippner \cite{Elek2010} in terms of colored graphs and to the (unpublished) definition of Ozawa of soficity of pseudo full groups, see Definition 10.1 in \cite{Conley2013a}. Remark that the authors in \cite{Conley2013a} prove that Definition \ref{dfn:actsof} implies the soficity of the pseudo full group in the proof of Theorem 10.7. 

\subsection{Sofic entropy}

In what follows, we use the definitions and notations of Kerr \cite{Kerr2013a} with the only exception that we will use ultralimits instead of limsup in the definition. 

Let $G$ be a countable sofic group and let $a\in\act(G)$ be an action of $G$ on a standard probability space. Let ${\mathbf F}$ be a free group, let $\pi:{\mathbf F}\to G$ be a surjective homomorphism and let $\rho:G\to {\mathbf F}$ be a section of $\pi$ which maps the identity to the identity. Fix a sofic approximation $\Sigma=(a_n)_n$ as in Definition \ref{dfn:soficapr}. Consider two partitions $\xi\leq\alpha\in\part_f(\fx_a)$, a finite subset $F\subset G$ and $\delta>0$. We put $\hom(a,\alpha,F,\delta,a_n):=\hom(a^{{\mathbf F}},\alpha,\rho(F),\delta,a_n)$ (see Definition \ref{dfn:quasihomo}), where $a^{{\mathbf F}}(g)=a(\pi(g))$. We denote by $|\hom(a,\alpha,F,\delta,a_n)|_\xi$ the cardinality of the set of $(\alpha,\delta,\rho(F))$-homomorphisms from $a$ to $a_n$ restricted to $\xi$, as explained in \cite{Kerr2013a}.

We can now define the entropy of $a$ with respect to $\Sigma$, as follows
\begin{align*} 
  \heb_\Sigma^\xi(\alpha,F,\delta,a):=&\lim_{n\in\ul} \frac{1}{|\fx_{a_n}|}\log\left(\vert\hom(a,\alpha,F,\delta,a_n)\vert_\xi\right),\\
  \heb_\Sigma^\xi(\alpha,F,a):=&\inf_{\delta>0}\heb_\Sigma^\xi(\alpha,F,\delta,a),\\
  \heb_\Sigma^\xi(\alpha,a):=&\inf_{F\subset G} \heb_\Sigma^\xi(\alpha,F,a),\\
  \heb_\Sigma^\xi(a):=&\inf_{\alpha>\xi} \heb_\Sigma^\xi(\alpha,a),\\
  \heb_\Sigma(a):=&\sup_{\xi} \heb^\xi_\Sigma(a).
\end{align*}
where $\xi$ and $\alpha$ are finite partitions of $\fx_a$ with $\xi<\alpha$, $F\subset G$ is a finite subset and $\delta>0$ is a real number. Observe that the definition does not depend on the section $\rho:G\to {\mathbf F}$, since for every $g\in \ker\pi$, we have that $\lim_n|\fix_g(a_n)|= 1$. If for some $\alpha,\delta,F$ and $n$ the set $\hom(\alpha,F,\delta,a_n)$ is empty, we will set $\heb^\xi_\Sigma(\alpha,F,\delta,a)=-\infty$.
 
\begin{prop}\label{prop:entrweakcon}
Let $G$ be a countable sofic group and let $a\in\act(G)$ be an action of $G$. Fix a sofic approximation $\Sigma$ and let $a^\Sigma_\ul$ be the sofic action as in Definition \ref{dfn:soficact}. Then $\heb_\Sigma(a)>-\infty$ if and only if $a\prec a^\Sigma_\ul$. 
\end{prop}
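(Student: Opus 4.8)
The plan is to squeeze $\heb_\Sigma(a)>-\infty$ between two equivalent reformulations that meet at the condition
\[
(\star)\qquad \text{for all }\alpha\in\part_f(\fx_a),\ F\subset G\text{ finite},\ \delta>0:\quad \hom(a,\alpha,F,\delta,a_n)\neq\emptyset\text{ for $\ul$-almost all }n,
\]
where $\hom(a,\alpha,F,\delta,a_n)=\hom(a^{\mathbf F},\alpha,\rho(F),\delta,a_n)$. The starting observation is that $|\hom(a,\alpha,F,\delta,a_n)|_\xi\geq 1$ exactly when the unrestricted set $\hom(a,\alpha,F,\delta,a_n)$ is non-empty (the restriction to $\xi$ of a non-empty set of homomorphisms is non-empty), and equals $0$ otherwise; hence $\heb_\Sigma^\xi(\alpha,F,\delta,a)\in\{-\infty\}\cup[0,\infty]$, and it is $>-\infty$ if and only if $\hom(a,\alpha,F,\delta,a_n)\neq\emptyset$ for $\ul$-almost all $n$.

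First I would prove $\heb_\Sigma(a)>-\infty\iff(\star)$, which is bookkeeping with the nested infima in the definition of $\heb_\Sigma$. If $(\star)$ holds then every $\heb_\Sigma^\xi(\alpha,F,\delta,a)\geq 0$, hence $\heb_\Sigma(a)=\sup_\xi\inf_{\alpha>\xi}\inf_F\inf_\delta\heb_\Sigma^\xi(\alpha,F,\delta,a)\geq 0$. Conversely, if $\heb_\Sigma(a)>-\infty$ I would fix $\xi$ with $\heb_\Sigma^\xi(a)>-\infty$ and peel off the four infima to obtain $\hom(a,\alpha,F,\delta,a_n)\neq\emptyset$ for $\ul$-almost all $n$, for every $\alpha$ refining $\xi$ and every finite $F\subset G$ and $\delta>0$. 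To drop the restriction $\alpha>\xi$, I would apply this to $\alpha\vee\xi$ and restrict the resulting homomorphisms to the subalgebra $\malg(\alpha_{\rho(F)})\subset\malg((\alpha\vee\xi)_{\rho(F)})$; the measure-matching inequality is unaffected, and the approximate-equivariance inequality degrades by at most a factor of $|(\alpha\vee\xi)_{\rho(F)}|$, which is absorbed by shrinking $\delta$ (as $(\star)$ for $\alpha\vee\xi$ supplies non-emptiness for all positive thresholds). This gives $(\star)$.

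Next I would prove $(\star)\iff a\prec a^\Sigma_\ul$. Since $a^{\mathbf F}(g)=a(\pi(g))$ and $a_\ul(g)=a^\Sigma_\ul(\pi(g))$, the statistics $\coeff(a,F,\alpha)$ over finite $F\subset G$ coincide with $\coeff(a^{\mathbf F},\rho(F),\alpha)$, whence $a\prec a^\Sigma_\ul\iff a^{\mathbf F}\prec a_\ul$; and because $\ker\pi$ acts trivially on $a_\ul$, that action factors through $\pi$, so in Proposition \ref{prop:weakquasi} it suffices to use index sets of the form $\rho(F)$ with $F\subset G$ finite. Thus $a^{\mathbf F}\prec a_\ul$ is equivalent to: $\hom(a^{\mathbf F},\alpha,\rho(F),\delta,a_\ul)\neq\emptyset$ for all $\alpha\in\part_f(\fx_a)$, finite $F\subset G$ and $\delta>0$. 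What remains is a {\L}o\'s-type equivalence between this and $(\star)$: for fixed $\alpha$ and $F$, the set $\hom(a^{\mathbf F},\alpha,\rho(F),\delta,a_\ul)$ is non-empty for some $\delta>0$ if and only if $\hom(a^{\mathbf F},\alpha,\rho(F),\delta,a_n)$ is non-empty for $\ul$-almost all $n$ for some $\delta>0$ (both sides quantify over all $\delta$, so shrinking $\delta$ is harmless). For the ``if'' direction, given $\ph_n\in\hom(a^{\mathbf F},\alpha,\rho(F),\delta,a_n)$ on a set in $\ul$, the map $\ph_\ul(A):=[\ph_n(A)]_\ul$ is a Boolean homomorphism of the finite algebra $\malg(\alpha_{\rho(F)})$ into $\malg(\fx_\ul,\mu_\ul)$ by $[A_n]_\ul\cup[B_n]_\ul=[A_n\cup B_n]_\ul$, and, using Proposition \ref{prop:consmeas}(1), the identity $g[A_n]_\ul=[gA_n]_\ul$, and the fact that $\lim_{n\in\ul}$ commutes with finite sums, its two defining inequalities pass to the $\ul$-limit with $\delta$ replaced by any $\delta'>\delta$. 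For the ``only if'' direction, given $\psi\in\hom(a^{\mathbf F},\alpha,\rho(F),\delta,a_\ul)$, I would represent its finitely many values $\psi(A)$ as $[\psi_n(A)]_\ul$ via Proposition \ref{prop:consmeas}(2); since the $\psi(A)$ partition $\fx_\ul$, the $\ul$-limits of the measures of the pairwise intersections and of the missed mass vanish, so on a set in $\ul$ the $\psi_n(A)$ can be disjointified into an honest finite partition of $\fx_n$ that is $L^1$-close to $(\psi_n(A))_A$, and the Boolean homomorphism it determines lies in $\hom(a^{\mathbf F},\alpha,\rho(F),\delta',a_n)$ for $\ul$-almost all $n$, for any $\delta'>\delta$, because the relevant quantities converge along $\ul$ to those computed from $\psi$.

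The step I expect to be the main obstacle is this last ``only if'': turning the sequences representing $\psi(A)$ into genuine finite partitions at $\ul$-almost all levels and verifying that both approximate-equivariance inequalities survive, while keeping track of strict versus non-strict inequalities under $\lim_{n\in\ul}$. Everything else is routine --- unwinding the definition of $\heb_\Sigma$ and passing between $G$-actions and their lifts to the free group $\mathbf F$ using that $\ker\pi$ acts trivially on $a_\ul$.
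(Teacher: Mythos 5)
Your proof is correct and follows essentially the same route as the paper: both directions reduce $\heb_\Sigma(a)>-\infty$ to the $\ul$-almost-everywhere non-emptiness of the sets $\hom(a,\alpha,F,\delta,a_n)$, and then pass between level-$n$ homomorphisms and ultraproduct homomorphisms via $\ph_\ul(A):=[\ph_n(A)]_\ul$ together with Proposition \ref{prop:weakquasi}. You are in fact somewhat more careful than the paper on two points it leaves implicit --- removing the restriction $\alpha>\xi$ by restricting homomorphisms along refinements, and disjointifying the representatives $\psi_n(A)$ in the converse direction --- but these are elaborations of the same argument rather than a different one.
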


This proposition is a corollary of Proposition \ref{prop:weakquasi}. We observe that it is also a special case of Proposition 6 of \cite{Graham2014}.

\begin{proof}
  Let ${\mathbf F}$ be a free group, let $\pi:{\mathbf F}\to G$ be a surjective homomorphism, let $\rho:G\to{\mathbf F}$ be a section and let $\Sigma=(a_n)_n$ be a sofic approximation. 

Suppose that $\heb_\Sigma(a)>-\infty$. Then there is a finite partition $\xi\in\part_f(\fx_a)$ such that for every $\alpha\in\part_f(\fx_a)$ with $\alpha>\xi$, for every finite subset $F\subset G$ and for every $\delta>0$, we have 
 \[\left\lbrace n\in\bn:\ \hom(a^{{\mathbf F}},\alpha,\rho(F),\delta,a_n)\neq\emptyset \right\rbrace\in\ul.\]
 Take $\ph_n\in \hom(a^{{\mathbf F}},\alpha,\rho(F),\delta,a_n)$ and define $\ph_\ul(A):=[\ph_n(A)]_\ul$. By construction we have that $\ph_\ul\in \hom(a,\alpha,F,\delta,a^\Sigma_\ul)$ and hence the set is not empty. Therefore Proposition \ref{prop:weakquasi} implies that $a\prec a^\Sigma_\ul$. 

Conversely, if we suppose that $a\prec a^\Sigma_\ul$, Proposition \ref{prop:weakquasi} tells us that for every finite partition $\alpha=\{A^1,\ldots,A^k\}\in\part_f(\fx_a)$, for every finite subset $F\subset G$ and for every $\delta>0$ the set $\hom(a,\alpha,F,\delta,a^\Sigma_\ul)$ is not empty. Take an element $\ph_\ul\in \hom(a^{\mathbf F},\alpha,\rho(F),a_\ul)$. Choose a family of subsets $\{B_n^i\}_{i,n}$ such that $\ph_\ul(A^i)=[B^i_n]_\ul$ and set $\ph_n(A^i):=B_n^i$. Then, we observe that for every $\eps>0$, the set of $n\in\bn$ such that $\ph_n\in \hom(a,\alpha,F,\delta+\eps,a_n)$ is in $\ul$, hence $\heb_\Sigma(a)>-\infty$.
\end{proof} 

 Let $G$ be a residually finite group and let $(H_n)_n$ be a chain of finite index subgroups of $G$. We denote by $a^{(H_n)}$ the profinite action associated to the sequence which we will always assume to be free. If the profinite action $a^{(H_n)}$ is free, then the sequence of finite actions gives us a sofic approximation of the group which we will denote by $\Sigma_{(H_n)}$.

Combining Proposition \ref{prop:entrweakcon} with Corollary \ref{crl:profvsultra}, we get the following interesting result. 

\begin{crl}\label{crl:entropyprofinite}
  Let $G$ be a residually finite group and let $(H_n)_n$ be a chain of finite index subgroups of $G$ such that the associated profinite action is free. Then for every action $a\in\act(G)$ we have that $\heb_{\Sigma_{(H_n)}}(a) >-\infty$ if and only if $a\prec a^{(H_n)}$. 
\end{crl}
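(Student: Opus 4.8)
The plan is to combine the two results that have just been established. By Corollary~\ref{crl:profvsultra}, the profinite action $a^{(H_n)}$ is weakly equivalent to the ultraproduct $a_\ul$ of the sequence of finite actions $(G/H_n)$, which (viewed through the free group ${\mathbf F}$ and $\pi$) is precisely the sofic action $a^{\Sigma_{(H_n)}}_\ul$ attached to the sofic approximation $\Sigma_{(H_n)}$. By Proposition~\ref{prop:entrweakcon} applied to this sofic approximation, $\heb_{\Sigma_{(H_n)}}(a)>-\infty$ if and only if $a\prec a^{\Sigma_{(H_n)}}_\ul$. Putting these together, $\heb_{\Sigma_{(H_n)}}(a)>-\infty$ if and only if $a\prec a_\ul \sim a^{(H_n)}$, and since weak containment only depends on the weak equivalence class of the right-hand action (it is transitive, by the remark following Definition~\ref{dfn:wc}), this is equivalent to $a\prec a^{(H_n)}$.

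Concretely, first I would spell out that the finite $G$-actions on the quotients $G/H_n$, precomposed with $\pi:{\mathbf F}\to G$, form exactly the sofic approximation $\Sigma_{(H_n)}$ (this is already noted in the paragraph preceding the corollary, using freeness of $a^{(H_n)}$ to check the two fixed-point conditions of Definition~\ref{dfn:soficapr}), so that $a^{\Sigma_{(H_n)}}_\ul$ is the $G$-action obtained from the ${\mathbf F}$-ultraproduct of $(G/H_n)$ by letting $\ker\pi$ act trivially; as a $G$-action this is the ultraproduct $a_\ul$ in the sense of Definition~\ref{dfn:ultralimit}. Then I would invoke Corollary~\ref{crl:profvsultra} to get $a^{(H_n)}\sim a_\ul = a^{\Sigma_{(H_n)}}_\ul$. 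Finally I would invoke Proposition~\ref{prop:entrweakcon}: $\heb_{\Sigma_{(H_n)}}(a)>-\infty \iff a\prec a^{\Sigma_{(H_n)}}_\ul \iff a\prec a^{(H_n)}$, the last step because $a\prec b$ and $b\sim c$ imply $a\prec c$.

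There is essentially no genuine obstacle here — the statement is a formal consequence of the two cited results, and the only thing requiring any care is the bookkeeping of passing between ${\mathbf F}$-actions and $G$-actions (checking that the ultraproduct of $(G/H_n)$ as ${\mathbf F}$-actions, with $\ker\pi$ acting trivially, really is the same object as the profinite $G$-action's ultraproduct and that weak containment $a\prec a^{\Sigma_{(H_n)}}_\ul$ in the $G$-sense matches the condition tested in Proposition~\ref{prop:entrweakcon}, which holds because that proposition is itself phrased for $G$-actions). One should also note explicitly that the definition of $\heb_\Sigma$ does not depend on the chosen section $\rho$ (already observed in the text), so the identification is clean. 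Thus the proof is a two-line deduction.

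\begin{proof}
  Since the profinite action $a^{(H_n)}$ is assumed free, the sequence of finite $G$-actions on the quotients $G/H_n$ is a sofic approximation $\Sigma_{(H_n)}$, and the associated sofic action $a^{\Sigma_{(H_n)}}_\ul$ is, as a $G$-action, the ultraproduct of the sequence $(G/H_n)_n$. By Corollary~\ref{crl:profvsultra}, the profinite action $a^{(H_n)}$ is weakly equivalent to this ultraproduct, i.e.\ $a^{(H_n)}\sim a^{\Sigma_{(H_n)}}_\ul$. By Proposition~\ref{prop:entrweakcon}, $\heb_{\Sigma_{(H_n)}}(a)>-\infty$ if and only if $a\prec a^{\Sigma_{(H_n)}}_\ul$. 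Since weak containment is transitive and $a^{\Sigma_{(H_n)}}_\ul\sim a^{(H_n)}$, the condition $a\prec a^{\Sigma_{(H_n)}}_\ul$ is equivalent to $a\prec a^{(H_n)}$, which proves the corollary.
\end{proof}
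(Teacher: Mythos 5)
Your argument is exactly the paper's: the corollary is stated there as the direct combination of Proposition~\ref{prop:entrweakcon} with Corollary~\ref{crl:profvsultra}, which is precisely your two-step deduction, and your extra care in identifying $a^{\Sigma_{(H_n)}}_\ul$ with the $G$-ultraproduct of $(G/H_n)_n$ only makes the implicit bookkeeping explicit. The proof is correct as written.
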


Since the corollary holds for every ultrafilter, it is still true for the usual definition of entropy with $\limsup$. In particular the sofic entropy of a non-strongly ergodic action with respect to a sofic approximation given by expanders is always $-\infty$.

\begin{crl}
  Let $G$ be a residually finite group let $(K_n)_n$ be a chain of finite index subgroups of $G$ which has property $(\tau)$. For every non-strongly ergodic action $a$ of $G$, we have $\heb_{\Sigma_{(K_n)}}(a)=-\infty$.
\end{crl}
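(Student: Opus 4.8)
The plan is to deduce this from Corollary \ref{crl:entropyprofinite}: it suffices to show that if $(K_n)_n$ is a chain of finite index subgroups with property $(\tau)$, then a non-strongly ergodic action $a$ of $G$ cannot be weakly contained in the profinite action $a^{(K_n)}$. Recall that property $(\tau)$ of the chain means precisely that the associated profinite action $a^{(K_n)}$ is strongly ergodic; this is the definition (or one of the standard equivalent formulations via a spectral gap for the quasi-regular representations on $\ell^2_0(G/K_n)$). So the real content is the following general fact about weak containment: if $a\prec b$ and $b$ is strongly ergodic, then $a$ is strongly ergodic. Equivalently, the class of strongly ergodic actions is upward closed under weak containment, so a non-strongly ergodic $a$ cannot be weakly contained in the strongly ergodic $a^{(K_n)}$, giving $\heb_{\Sigma_{(K_n)}}(a)=-\infty$ by Corollary \ref{crl:entropyprofinite}.

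First I would recall the definition of strong ergodicity in a form adapted to the partition/statistics language used in the paper: an action $b$ is \emph{not} strongly ergodic if there is $\eps_0>0$ and a sequence of measurable sets $C_m\subset\fx_b$ with $\mu_b(C_m)=1/2$ (say, after passing to a subsequence one may assume the measures converge; the value $1/2$ can be arranged or one works with $\mu_b(C_m)$ bounded away from $0$ and $1$) such that $\mu_b(gC_m\,\Delta\,C_m)\to 0$ for every $g\in G$ — an \emph{asymptotically invariant sequence} that is non-trivial. Next I would assume for contradiction that $a$ is non-strongly ergodic and $a\prec a^{(K_n)}$, and produce a non-trivial asymptotically invariant sequence for $a^{(K_n)}$, contradicting property $(\tau)$. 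Concretely: let $D_m\subset\fx_a$ witness non-strong ergodicity of $a$, with $\mu_a(D_m)\to t\in(0,1)$ and $\mu_a(gD_m\,\Delta\,D_m)\to 0$. For each $m$, apply the definition of $a\prec a^{(K_n)}$ with the two-atom partition $\alpha_m=\{D_m,\fx_a\setminus D_m\}$, the finite set $F_m=\{1_G,g_1,\dots,g_m\}$ (an exhaustion of $G$), and tolerance $\eps_m=1/m$, to obtain a set $E_m\subset\fx_{a^{(K_n)}}$ with
\[
|\mu_a(D_m\cap g D_m)-\mu_{a^{(K_n)}}(E_m\cap g E_m)|<\tfrac1m\quad\text{for all }g\in F_m,
\]
together with the diagonal term $|\mu_a(D_m)-\mu_{a^{(K_n)}}(E_m)|<1/m$. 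Then $\mu_{a^{(K_n)}}(E_m)\to t$ and, using $\mu_{a^{(K_n)}}(gE_m\,\Delta\,E_m)=\mu(E_m)+\mu(gE_m)-2\mu(gE_m\cap E_m)$ and $G$-invariance of $\mu_{a^{(K_n)}}$ together with the two-atom statistics comparison, one gets $\mu_{a^{(K_n)}}(gE_m\,\Delta\,E_m)\to 0$ for every fixed $g\in G$ (since $g$ eventually lies in $F_m$ and the statistics of $\alpha_m$ in $a$ force the corresponding invariance defect to vanish). Thus $(E_m)_m$ is a non-trivial asymptotically invariant sequence for $a^{(K_n)}$, contradicting strong ergodicity.

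The main obstacle I expect is the bookkeeping around the relationship between strong ergodicity of the action $a^{(K_n)}$ on its (Loeb/profinite) probability space and property $(\tau)$ of the chain, and making sure the asymptotically invariant sequence produced is genuinely non-trivial (i.e.\ its measures stay bounded away from $0$ and $1$). For the first point one uses that property $(\tau)$ for $(K_n)$ is equivalent to the non-existence of a non-trivial asymptotically invariant sequence for the inverse limit action $a^{(K_n)}$ — this is the standard dictionary (Schmidt, Lubotzky–Zimmer) and I would cite it rather than reprove it. For the second point, note that if $a$ is non-strongly ergodic one may always arrange $\mu_a(D_m)\to 1/2$ by the usual trick (taking symmetric differences / combining an asymptotically invariant set with its translate, or simply choosing level sets of an almost-invariant function); with $t=1/2$ the non-triviality of $(E_m)$ is automatic. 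The remaining estimates — passing from the $\ell^1$-closeness of $2\times2$ statistics to the vanishing of $\mu(gE_m\,\Delta\,E_m)$ — are a one-line computation. Finally I would remark, as in the paragraph preceding the statement, that since the argument is valid for every ultrafilter $\ul$ the conclusion also holds for the $\limsup$ version of sofic entropy; in particular this recovers the known fact that actions which are not strongly ergodic have sofic entropy $-\infty$ with respect to sofic approximations coming from expander chains.
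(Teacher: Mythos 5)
Your proposal is correct and follows essentially the same route as the paper: reduce via Corollary \ref{crl:entropyprofinite} to showing that a non-strongly ergodic action cannot be weakly contained in $a^{(K_n)}$, using that property $(\tau)$ of the chain makes $a^{(K_n)}$ strongly ergodic and that strong ergodicity passes to weakly contained actions. The only difference is that you prove the latter fact by hand (transferring a non-trivial asymptotically invariant sequence through the $2$-atom statistics), whereas the paper simply cites Lemmas 2.2 and 5.1 of Ab\'ert--Elek \cite{Abert2012b}; your inline argument is sound.
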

\begin{proof}
  It is enough to observe that if $(K_n)_n$ has property $(\tau)$, then $a^{(K_n)}$ is strongly ergodic, as explained for example in Lemma 2.2 of \cite{Abert2012b}, and an action weakly contained in a strongly ergodic action is also strongly ergodic (cf. Lemma 5.1 \cite{Abert2012b}). 
\end{proof}

%\subsection{Proof of Theorem D}
\subsection{Sofic entropy of profinite actions}

Combining Corollary \ref{crl:entropyprofinite} with \cite{Abert2012b}, we can now show that for some groups sofic entropy of profinite actions crucially depends on the sofic approximation.  

\begin{thm}\label{thm:entrprofcont}
 Let $G$ be a countable free group or $\PSL_k(\bz)$ for $k\geq 2$. Then there is a continuum of normal chains $\{(H^r_n)_n\}_{r\in\br}$ such that $\heb_{\Sigma_{(H^r_n)}}(a^{(H^s_n)})>-\infty$ if and only if $r=s$. 
\end{thm}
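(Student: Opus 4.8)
The plan is to deduce Theorem~\ref{thm:entrprofcont} by combining Corollary~\ref{crl:entropyprofinite} with the rigidity result of Ab\'ert--Elek from \cite{Abert2012b}. By Corollary~\ref{crl:entropyprofinite}, for any chain $(H_n)_n$ with free profinite action and any action $a\in\act(G)$ we have $\heb_{\Sigma_{(H_n)}}(a)>-\infty$ if and only if $a\prec a^{(H_n)}$. Applying this with $a=a^{(H^s_n)}$ and the approximation coming from $(H^r_n)_n$, the statement $\heb_{\Sigma_{(H^r_n)}}(a^{(H^s_n)})>-\infty$ becomes precisely $a^{(H^s_n)}\prec a^{(H^r_n)}$. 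Thus the theorem reduces to producing a continuum of normal chains of finite index subgroups $\{(H^r_n)_n\}_{r\in\br}$, each with free profinite action, such that $a^{(H^s_n)}\prec a^{(H^r_n)}$ holds if and only if $r=s$.

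The key input for this is the work of Ab\'ert and Elek in \cite{Abert2012b}, where they show that for free groups and $\PSL_k(\bz)$ ($k\geq 2$) there is a continuum of pairwise weakly inequivalent (indeed pairwise weakly incomparable) free profinite actions. First I would recall that for a residually finite group admitting "many" finite quotients, one can build uncountably many descending normal chains; the delicate point, handled in \cite{Abert2012b}, is arranging that the associated profinite actions are pairwise not weakly contained in one another. For free groups this uses the abundance of finite quotients together with an invariant (such as the spectral gap / cost / rank gradient behaviour, or a suitable statistic of the profinite action) that separates the chains; for $\PSL_k(\bz)$ one uses congruence-type and non-congruence subgroups, invoking property $(\tau)$ for the congruence tower versus its failure elsewhere, so that weak containment would force a contradiction between strong ergodicity properties (as in the previous corollary). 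I would simply cite the relevant theorem of \cite{Abert2012b} giving the continuum family $\{(H^r_n)_n\}_{r\in\br}$ with $a^{(H^r_n)}$ free for all $r$ and $a^{(H^s_n)}\prec a^{(H^r_n)}\iff r=s$.

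Then the proof is essentially a two-line assembly. Fix the family $\{(H^r_n)_n\}_{r\in\br}$ from \cite{Abert2012b}. For each $r$, since $a^{(H^r_n)}$ is free, $\Sigma_{(H^r_n)}$ is a genuine sofic approximation of $G$, so Corollary~\ref{crl:entropyprofinite} applies and gives
\[
\heb_{\Sigma_{(H^r_n)}}\bigl(a^{(H^s_n)}\bigr)>-\infty
\quad\Longleftrightarrow\quad
a^{(H^s_n)}\prec a^{(H^r_n)}.
\]
By the defining property of the family, the right-hand side holds precisely when $r=s$. One should note $a^{(H^r_n)}\prec a^{(H^r_n)}$ trivially (reflexivity of weak containment, so the "if" direction is automatic), and the "only if" direction is exactly the content imported from \cite{Abert2012b}. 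This completes the proof.

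The main obstacle is not in this paper's machinery at all but in the external rigidity statement: one must be sure that \cite{Abert2012b} provides a continuum of normal chains (not merely a continuum of weakly inequivalent profinite actions up to some coarser equivalence) all of whose profinite actions are free and which are pairwise incomparable under weak containment, with the incomparability being "one-directional-proof-suffices", i.e. $a^{(H^s_n)}\not\prec a^{(H^r_n)}$ for $r\neq s$. If \cite{Abert2012b} only yields pairwise weak \emph{inequivalence} ($a^{(H^r_n)}\not\sim a^{(H^s_n)}$), a short extra argument is needed to upgrade to incomparability, or else one settles for the weaker conclusion; but as stated the theorem asserts the "if and only if", so I would lean on the stronger form of the Ab\'ert--Elek result (their construction separates the actions by a monotone invariant such as spectral gap, which does give incomparability).
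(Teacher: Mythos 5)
Your proposal is correct and follows exactly the paper's route: the paper deduces Theorem~\ref{thm:entrprofcont} by combining Corollary~\ref{crl:entropyprofinite} with the Ab\'ert--Elek rigidity result, which it records as Theorem~\ref{thm:AbertElek} in precisely the strong form you hoped for (a continuum of normal chains with free profinite actions and $a^{(H^s_n)}\prec a^{(H^r_n)}$ if and only if $r=s$), so your worry about only having weak inequivalence does not arise.
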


Note that the sofic entropy of profinite actions is either $0$ or $-\infty$ as shown in Section 4 of \cite{Chung2014}, see also Lemma \ref{lem:genprof}. Theorem \ref{thm:entrprofcont} follows from Corollary \ref{crl:entropyprofinite} and the following theorem.

\begin{thm}[Ab\'{e}rt-Elek, \cite{Abert2012b}]\label{thm:AbertElek}
   Let $G$ be a countable free group or $\PSL_k(\bz)$ for $k\geq 2$. Then there is a continuum of normal chains $\{(H^r_n)_n\}_{r\in\br}$ such that $a^{(H^s_n)}\prec a^{(H^r_n)}$ if and only if $r=s$. 
\end{thm}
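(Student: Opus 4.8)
The statement is symmetric, and its forward direction ($r=s\Rightarrow a^{(H^s_n)}\prec a^{(H^r_n)}$) is the triviality that an action is weakly contained in itself. Thus the entire content is the converse: for $r\neq s$ one must produce a continuum of \emph{free, normal} profinite actions that are pairwise \emph{weakly incomparable}. The plan is to attach to each profinite action a spectral invariant that is monotone under weak containment, and then to arrange the chains so that these invariants form an antichain.

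First, the monotone invariant. Weak containment of actions passes to the Koopman representations on the orthocomplement of the constants: if $a\prec b$ then the representation $\kappa_a^0$ of $G$ on $L^2_0(\fx_a,\mu_a)$ is weakly contained in $\kappa_b^0$, and hence for every element $x=\sum_g c_g g$ of the complex group algebra $\mathbb{C}[G]$ one has the operator-norm inequality $\|\kappa_a^0(x)\|\leq\|\kappa_b^0(x)\|$. This is the engine: to show $a^{(H^s_n)}\not\prec a^{(H^r_n)}$ it suffices to exhibit a single $x\in\mathbb{C}[G]$ with $\|\kappa^0_{a^{(H^s_n)}}(x)\|>\|\kappa^0_{a^{(H^r_n)}}(x)\|$. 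For a normal profinite action $a^{(H_n)}$ the representation $\kappa^0$ splits along the finite quotients, so $\|\kappa^0_{a^{(H_n)}}(x)\|$ equals $\sup_n\|x\restriction \ell^2_0(G/H_n)\|$, a supremum of explicit finite-dimensional norms. This is exactly where the cited spectral machinery of \cite{Abert2012b} enters (property $(\tau)$ along congruence subgroups for $\PSL_k(\bz)$, expander quotients for $\mathbf F$): it is what keeps these spectra controllable and, when needed, bounded away from $1$.

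Next, the building blocks and the antichain. Both $\mathbf F$ and $\PSL_k(\bz)$ carry an abundance of finite quotients with prescribed spectral behaviour: congruence quotients $\PSL_k(\bz/m)$ for $\PSL_k(\bz)$, and, via surjections onto $\PSL_2(\mathbb{F}_p)$, expander quotients for the free group. I would index the continuum by an almost-disjoint family $\{P_r\}_{r\in\br}$ of infinite sets of primes; such a family has cardinality continuum, and almost-disjointness of infinite sets forces pairwise incomparability under inclusion. For each $r$ let $(H^r_n)_n$ be the normal chain whose quotient $G/H^r_n$ is assembled from the simple blocks indexed by the first $n$ primes of $P_r$; since $P_r$ is infinite, $\bigcap_n H^r_n=\{1\}$, so each $a^{(H^r_n)}$ is a free, normal profinite action. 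For each prime $p$ one localises a Hecke/adjacency element $x_p\in\mathbb{C}[G]$ supported on the $p$-block, designed so that $\|\kappa^0_{a^{(H_n)}}(x_p)\|$ takes a definite, $p$-dependent value when $p$ occurs in the tower and a strictly smaller value when it does not. Given $r\neq s$, choose $p\in P_s\setminus P_r$ and $p'\in P_r\setminus P_s$ (both sets are infinite): then $\|\kappa^0_{a^{(H^s_n)}}(x_p)\|>\|\kappa^0_{a^{(H^r_n)}}(x_p)\|$ forbids $a^{(H^s_n)}\prec a^{(H^r_n)}$, and symmetrically $x_{p'}$ forbids the reverse containment, yielding mutual non-containment.

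The main obstacle is precisely the separation at the level of a single $x_p$: one must prove that the spectral contribution of a prime block present in one tower but absent from the other cannot be matched by the otherwise rich spectrum of the other tower, i.e.\ that spectra attached to distinct primes do not interfere. This is where the genuinely hard analytic and number-theoretic input of \cite{Abert2012b} resides: quantitative spectral-gap (Ramanujan/Selberg-type) estimates, together with the asymptotic independence of congruence quotients over disjoint prime sets, so that the $p$-signatures are disjoint rather than merely distinct. Once this separation is established, the antichain of prime sets transfers verbatim to an antichain of profinite actions, and checking that each $(H^r_n)$ is normal and gives a free action is routine.
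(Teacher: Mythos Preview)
Your overall architecture---a weak-containment-monotone invariant together with an antichain of index sets---is sound, and it is indeed how the result is organized. But the route you propose through Koopman operator norms and ``Hecke/adjacency elements $x_p$'' is not the route taken in \cite{Abert2012b}, and your appeal to that paper for the hard step is a misattribution. The actual argument (sketched here for $\PSL_k(\bz)$, $k\geq 3$) is purely at the level of actions: one takes the principal congruence chain so that the quotients $G/H_n$ are pairwise non-isomorphic finite simple groups; property~(T) makes every $a^I$ strongly ergodic; and then Lemma~5.2 of \cite{Abert2012b} says that a \emph{strongly ergodic} profinite action weakly contained in another profinite action must be a \emph{factor} of it. Once weak containment is upgraded to a factor map, the simple-quotient structure forces $I\subset J$, and an antichain of infinite subsets of $\bn$ finishes the job. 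For $\PSL_2(\bz)$ and free groups one replaces property~(T) by property~$(\tau)$ along the congruence chain and passes to finite-index subgroups. No Ramanujan/Selberg-type estimates, and no fine spectral separation between primes, enter at any point.

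Your spectral scheme is not hopeless, but the obstacle you flag---that the $p$-signature of one tower might be matched inside the spectrum of another---is not resolved by quantitative spectral-gap bounds; a uniform gap tells you nothing about separating two non-trivial representations from each other. What would make your approach go through (for $k\geq 3$) is the Fell-isolation of finite-dimensional irreducibles for property~(T) groups: an irreducible $\pi_p$ factoring through $\PSL_k(\mathbb{F}_p)$ is isolated in $\hat G$, so it cannot be weakly contained in $\kappa^0_{a^{J}}$ unless it literally occurs there, which fails when $p\notin J$ since the simple quotients are pairwise non-isomorphic. That is morally the representation-theoretic shadow of the strong-ergodicity argument above, but it is not the ``Ramanujan/Selberg'' input you invoke, and for the free group (no property~(T)) you would still need to go through property~$(\tau)$ rather than through explicit adjacency spectra. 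As written, the proposal leaves the decisive separation step as an unfulfilled citation.
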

\begin{proof}[Sketch of the Proof for $G=\PSL_k(\bz)$, $k\geq 3$.]
   Let $(H_n)_n$ be the sequence of congruence subgroups of $G$, so that the family $\{G/H_n\}$ is a family of pairwise-non isomorphic finite non Abelian simple groups. For $I=\{i_1,i_2,i_3,\ldots\}\subset \bn$ infinite, we denote by $a^I$ the profinite action associated to the normal chain $(\cap_{i\leq n} H_i)_n$. Observe that for an infinite $I$, the profinite action $a^I$ is free and moreover, by property $(T)$, it is strongly ergodic. Therefore we can apply Lemma 5.2 of \cite{Abert2012b} to get that $a^I\prec a^J$ if and only if $I\subset J$. So if we take any continuum of incomparable infinite subset of $\bn$, then the associated profinite actions $\{a^I\}_I$ are weakly incomparable. 

In order to see that Theorem \ref{thm:AbertElek} holds for $\PSL_2(\bz)$ and for free groups, we can use that the congruence subgroups in $\PSL_2(\bz)$ have property $(\tau)$ and that the proof above passes to finite index subgroups, see the proof of Theorem 3 in \cite{Abert2012b}.  
\end{proof}

\begin{rmk}
  Theorem \ref{thm:AbertElek} holds for a large variety of groups. In fact the Strong Approximation Property claims that any Zariski dense subgroup of the rational point of a rational algebraic linear group, has infinitely many pairwise non-isomorphic simple non-Abelian finite quotients, see \cite[Window 9]{Lubotzky2003}. This was used in \cite{Abert2012b} to find family of pairwise inequivalent free actions of linear property $(T)$ groups. One can then combine this fact with Margulis normal subgroup theorem to show that Theorem \ref{thm:entrprofcont} holds for many lattices of higher rank algebraic linear groups.
\end{rmk}

We know give an example of an action which has positive entropy with respect to a sofic approximation and $-\infty$ with respect to another. We will do this considering the examples of Theorem \ref{thm:entrprofcont} and taking the diagonal product with respect to a Bernoulli shift. Then Bowen's computation for such actions will allow us to conclude. 

\begin{thm}\label{thm:randinf}
  Let $G$ be a countable free group or $\PSL_k(\bz)$ for $k\geq 2$. For every $r\geq 0$, there is an action $a$ of $G$ and two sofic approximations $\Sigma_1$ and $\Sigma_2$ such that $\heb_{\Sigma_1}(a)=r$ and $\heb_{\Sigma_2}(a)=-\infty$. 
\end{thm}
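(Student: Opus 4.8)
The plan is to combine three ingredients: the weak-equivalence rigidity of profinite actions from Theorem \ref{thm:entrprofcont}, the behaviour of sofic entropy under Bernoulli diagonal products (Bowen's computation), and the fact that profinite actions admit generating partitions of arbitrarily small Shannon entropy (Lemma \ref{lem:genprof}). First I would fix $r\geq 0$ and pick, by Theorem \ref{thm:entrprofcont}, two normal chains $(H^1_n)_n$ and $(H^2_n)_n$ in $G$ with associated profinite actions $a^1:=a^{(H^1_n)}$ and $a^2:=a^{(H^2_n)}$, so that $a^2\not\prec a^1$. Let $\Sigma_i:=\Sigma_{(H^i_n)}$ be the corresponding sofic approximations. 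The candidate action is a diagonal product $a := a^2\times \mathbf{b}_r$, where $\mathbf{b}_r$ is a Bernoulli shift of $G$ over a base of Shannon entropy exactly $r$ (using a base space $([0,1],\lambda)$ rescaled, or a suitable finite/countable base, to hit any prescribed $r\geq 0$; for $r=0$ take the trivial action).

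Next I would analyse $\heb_{\Sigma_1}(a)$ and $\heb_{\Sigma_2}(a)$ separately. For $\Sigma_2$: since $a^2$ is a factor of $a$ and weak containment passes to factors, $a^2\prec a$ is automatic, but we need the reverse comparison with the sofic action $a^{\Sigma_2}_\ul$. By Corollary \ref{crl:profvsultra}, $a^2$ is weakly equivalent to $a^{\Sigma_2}_\ul$. Since $a^2\times\mathbf{b}_r$ weakly contains $\mathbf{b}_r$, and $\mathbf{b}_r$ (for $r>0$) is not weakly contained in the profinite action $a^2$ — because profinite actions have a generating partition of arbitrarily small entropy (Lemma \ref{lem:genprof}), forcing any factor, hence any weakly contained action, to have zero sofic entropy, whereas $\mathbf{b}_r$ has positive Bernoulli entropy — we get $a\not\prec a^2\sim a^{\Sigma_2}_\ul$. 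Hence by Corollary \ref{crl:entropyprofinite} (or Proposition \ref{prop:entrweakcon}), $\heb_{\Sigma_2}(a)=-\infty$. In the case $r=0$ this particular argument collapses, so there I would instead use the second chain directly: replace the factor analysis by the observation that $a^2\not\prec a^1$ already gives $a=a^2\times(\text{trivial})=a^2\not\prec a^{\Sigma_1}_\ul\sim a^1$, so that $\heb_{\Sigma_1}(a)=-\infty$ and $\heb_{\Sigma_2}(a)=\heb_{\Sigma_2}(a^2)=0$ since $a^2\prec a^{\Sigma_2}_\ul$ and profinite entropy is $0$ when finite.

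For $\Sigma_1$ with $r>0$: here I would use Bowen's product formula. The action $a^2$ is weakly contained in... no — the point is that $a^2$ \emph{is} itself a profinite action which, although not necessarily weakly contained in $a^{\Sigma_1}_\ul$, still has the property that it admits a generating partition of arbitrarily small entropy. Bowen's theorem (from \cite{Bowen2010}) computes the sofic entropy of a direct product with a Bernoulli shift: for any sofic approximation $\Sigma$, if $c$ has a generating partition $\xi$ with $H(\xi)<\eps$ and $\heb_\Sigma(c)\geq 0$ (not $-\infty$), then $\heb_\Sigma(c\times\mathbf{b}_r)=\heb_\Sigma(c)+r$. So I would need $\heb_{\Sigma_1}(a^2)\neq-\infty$; but this fails in general since $a^2\not\prec a^1$. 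The correct fix is to choose the \emph{roles reversed}: take $a:=a^1\times\mathbf{b}_r$, so that with respect to $\Sigma_1$ we have $a^1\sim a^{\Sigma_1}_\ul$, hence $\heb_{\Sigma_1}(a^1)=0$ (profinite, finite entropy, value $0$ by \cite{Chung2014}), and Bowen's formula plus Lemma \ref{lem:genprof} gives $\heb_{\Sigma_1}(a)=0+r=r$; while with respect to $\Sigma_2$, since $a=a^1\times\mathbf{b}_r$ weakly contains $\mathbf{b}_r\not\prec a^2\sim a^{\Sigma_2}_\ul$ (using $r>0$ and Lemma \ref{lem:genprof} as above), we get $a\not\prec a^{\Sigma_2}_\ul$, hence $\heb_{\Sigma_2}(a)=-\infty$ by Corollary \ref{crl:entropyprofinite}.

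The main obstacle I expect is the careful handling of the two edge behaviours and the precise hypotheses of Bowen's product formula: one must ensure (i) that Bowen's formula applies to a profinite base action rather than a finite-entropy action in general — this is exactly why Lemma \ref{lem:genprof} (arbitrarily small generating partition) is invoked, since it lets us take $\delta\to 0$ in the orbit/entropy estimates — and (ii) that $\mathbf{b}_r$ is genuinely \emph{not} weakly contained in any profinite action when $r>0$, which follows because weak containment in a profinite action forces the weakly contained action to be a factor of a profinite (hence zero-entropy, CPE-trivial) action, contradicting positivity of Bernoulli entropy; and (iii) the separate treatment of $r=0$, where one simply sets $a=a^2$ (pure profinite) and reads off $\heb_{\Sigma_2}(a)=0$, $\heb_{\Sigma_1}(a)=-\infty$ directly from $a^2\not\prec a^1$ via Corollary \ref{crl:entropyprofinite}. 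Once these points are in place, Theorem \ref{thm:randinf} follows by assembling Theorem \ref{thm:entrprofcont}, Corollary \ref{crl:entropyprofinite}, Lemma \ref{lem:genprof}, and Bowen's computation.
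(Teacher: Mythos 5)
Your overall architecture (incomparable chains from Theorem \ref{thm:AbertElek}/\ref{thm:entrprofcont}, the diagonal product with a Bernoulli shift, Bowen's product computation together with Lemma \ref{lem:genprof} for the value $r$, and Corollary \ref{crl:entropyprofinite} for the $-\infty$ side) is exactly the paper's, and the $\Sigma_1$ half is fine. But the justification you give for the $-\infty$ half is wrong. You argue $a=a^1\times\mathbf{b}_r\not\prec a^{\Sigma_2}_\ul\sim a^2$ by claiming that the Bernoulli shift $\mathbf{b}_r$ is not weakly contained in the free profinite action $a^2$ when $r>0$. This is false: by the Ab\'ert--Weiss theorem, a Bernoulli shift is weakly contained in \emph{every} free pmp action, in particular in $a^2$. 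You can also see the failure from inside this paper: Bowen proved that Bernoulli entropy equals $H(\mu)>-\infty$ with respect to every sofic approximation, so Corollary \ref{crl:entropyprofinite} forces $\mathbf{b}_r\prec a^{(K_n)}$ for every chain whose profinite action is free. Your supporting reasoning is also flawed on two counts: weak containment does not imply being a factor (Proposition \ref{prop:weakfact} gives a factor only of the \emph{ultraproduct}, whose entropy theory is not that of $a^2$), and sofic entropy is not monotone under weak containment beyond the dichotomy ``$-\infty$ versus defined'' — so ``weakly contained in a zero-entropy action, hence zero entropy'' is not a valid inference.

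The fix is the one the paper uses, and which you essentially write down only in your $r=0$ discussion: use the \emph{profinite} factor, not the Bernoulli one. Since $a^1$ is a factor of $a=a^1\times\mathbf{b}_r$, we have $a^1\prec a$; so if $a\prec a^2$ then transitivity gives $a^1\prec a^2$, contradicting the choice of the chains. Hence $a\not\prec a^2\sim a^{\Sigma_2}_\ul$ and Corollary \ref{crl:entropyprofinite} yields $\heb_{\Sigma_2}(a)=-\infty$, for every $r\geq 0$ at once (no separate $r=0$ case is needed; the paper simply takes a finite base space with $H(\mu)=r$). With this replacement your argument matches the paper's proof; the remaining deviations (your extra hypothesis $\heb_\Sigma(c)\geq 0$ in Bowen's formula, the choice of base space) are harmless details.
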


In the proof of the theorem, we will need the following easy lemma which was point out to us by L. Bowen. 

\begin{lem}\label{lem:genprof}
  Let $(H_n)_n$ be a chain of finite index subgroups of $G$ and denote by $a=a^{(H_n)}$ the associated profinite action of $G$. For every $\eps>0$, there is a generating partition $\alpha$ of $\fx_a$ with $H(\alpha)\leq \eps$. 
\end{lem}
\begin{proof}
  Put $i_0:=[G:H_1]$ and $i_n:=[H_{n}:H_{n+1}]$, without lost of generality we can suppose that $i_n\geq 2$ for every $n\geq 0$. Let us fix $\eps>0$ and take $N\in\bn$ such that $2^{-(N-1)}+\sum_{n\geq N}n2^{-(n-1)}<\eps$. For every $n\geq N$, take a clopen $A_n\subset \fx_n$ such that 
  \begin{itemize}
  \item $A_n\cap A_m=\emptyset$ if $n\neq m$,
  \item $A_n$ is a clopen set associated to a conjugate $H^g_n$ of $H_n$, that is it has measure $1/[G:H_n]$ and it is $H^g_n$-invariant.
  \end{itemize}

  Set $A_0:=\fx\setminus \cup_{n\geq N} A_n$ and $\alpha:=\{A_0,A_N,A_{N+1},\ldots\}$. The partition $\alpha$ is generating and observe that $\mu(A_n)\leq 2^{-n}$ and $\mu(A_0)\geq 1-2^{-(N-1)}$. We now compute the entropy of $\alpha$, 
  \begin{align*}
    H(\alpha)=&-\mu(A_0)\log(\mu(A_0))-\sum_{n\geq N} \mu(A_n)\log(\mu(A_n))\\
    \leq&-\log(1-2^{-(N-1)}) +\sum_{n\geq N}\frac{\log(i_1\ldots i_n)}{i_1\ldots i_n}\\
    \leq&2^{-(N-1)}+\sum_{n\geq N} 2^{-(n-1)}\sum_{j=1}^n \frac{\log(i_j)}{i_j}\\
    \leq&2^{-(N-1)}+\sum_{n\geq N}n2^{-(n-1)}<\eps.\qedhere
  \end{align*}
\end{proof}

\begin{proof}[Proof of Theorem \ref{thm:randinf}]
Let $(\fx,\mu)$ be a finite probability space with $H(\mu)=r$ and denote by $b$ the Bernoulli shift of $G$ on $(\fx^G,\mu^G)$. By Theorem \ref{thm:AbertElek}, there are two normal chains of finite index subgroups $(H_n)_n$ and $(K_n)_n$ such that the actions $a^{(H_n)}$ and $a^{(K_n)}$ are weakly incomparable and so the diagonal action $a^{(H_n)}\times b$ is not weakly contained in $a^{(K_n)}$. Lemma \ref{lem:genprof} and Bowen's Theorem \cite[Theorem 8.1]{Bowen2010} tell us that $\heb_{\Sigma_{(H_n)}}(a^{(H_n)}\times b)=H(\mu)=r$ and by Corollary \ref{crl:entropyprofinite} we have that $\heb_{\Sigma_{(K_n)}}(a^{(H_n)}\times b)=-\infty$.
\end{proof}

%\Tableofcontents

\bibliographystyle{plainnat}
\bibliography{/home/feaver/Dropbox/Documents/Isittrue/Bib}
\end{document}